\newcommand\al{\alpha}
\newcommand\de{\delta}
\newcommand\la{\lambda}
\newcommand{\onn}{{\quad\mbox{on } }}
\newcommand{\inn}{{\quad\mbox{in } }}
\newcommand\De{\Delta}
\newcommand{\R}{\mathbb{R}}
\renewcommand{\Re} {\mathop{\mathrm{Re}}}
\renewcommand{\Im} {\mathop{\mathrm{Im}}}
\renewcommand{\div}{\mathop{\rm div}}
\newcommand{\curl} {\mathop{\rm curl}}
\newcommand{\pd}{\partial}
\newcommand{\na}{\nabla}
\newcommand{\ttt}{\tilde}
\newcommand{\RR}{\mathcal{R}}
\newcommand{\EE}{\mathcal{E}}
\newcommand{\KK}{\mathcal{K}}
\newcommand{\EQ}[1]{\begin{equation}\begin{split} #1 \end{split}\end{equation}}
\newcommand{\EQN}[1]{\begin{equation*}\begin{split} #1 \end{split}\end{equation*}}
\newtheorem{theorem}{Theorem}[section]
\newtheorem{lemma}{Lemma}[section]
\newtheorem{prop}{Proposition}[section]
\newtheorem{remark}{Remark}[section]
\numberwithin{equation}{section}
\begin{document}
\title[Finite time blow-up for the nematic liquid crystal flow in dimension two]{Finite time blow-up for the nematic liquid crystal flow in dimension two}

\author[C. Lai]{Chen-Chih Lai}
\address{\noindent
Department of Mathematics,
University of British Columbia, Vancouver, B.C., V6T 1Z2, Canada}
\email{chenchih@math.ubc.ca}

\author[F. Lin]{Fanghua Lin}
\address{\noindent
Courant Institute of Mathematical Sciences,
New York University, NY 10012, USA}
\email{linf@cims.nyu.edu}

\author[C. Wang]{Changyou Wang}
\address{\noindent
Department of Mathematics,
Purdue University, West Lafayette, IN 47907, USA}
\email{wang2482@purdue.edu}

\author[J. Wei]{Juncheng Wei}
\address{\noindent
Department of Mathematics,
University of British Columbia, Vancouver, B.C., V6T 1Z2, Canada}
\email{jcwei@math.ubc.ca}

\author[Y. Zhou]{Yifu Zhou}
\address{\noindent
Department of Mathematics,
University of British Columbia, Vancouver, B.C., V6T 1Z2, Canada}
\email{yfzhou@math.ubc.ca}

\begin{abstract}
We consider the initial-boundary value problem of a simplified nematic liquid crystal flow in a bounded, smooth domain $\Omega \subset \mathbb R^2$.
Given any $k$ distinct points in the domain, we develop a new {\em inner--outer gluing method} to construct solutions which blow up exactly at those $k$ points as $t$ goes to a finite time $T$.
Moreover, we obtain a precise description of the blow-up.
\end{abstract}
\maketitle


\section{Introduction}

\medskip

In this paper, we consider the following initial-boundary value problem of nematic liquid crystal flow in a bounded, smooth domain $\Omega$ in $\R^2$, and $T>0$
\begin{equation}\label{LCF}
\begin{cases}
\partial_t v+v\cdot \nabla v+\nabla P=\Delta v -\epsilon_0 \nabla\cdot\left(\nabla u\odot \nabla u-\frac12|\nabla u|^2\mathbb I_2\right)~&\mbox{ in }~\Omega\times(0,T),\\
\nabla\cdot v=0~&\mbox{ in }~\Omega\times(0,T),\\
\partial_t u+ v\cdot\nabla u=\Delta u+|\nabla u|^2u~&\mbox{ in }~\Omega\times(0,T),
\end{cases}
\end{equation}
with initial condition
\EQ{\label{ICs}
(v,u)\big|_{t=0}=(v_0,u_0)~&\mbox{ in }~\Omega,
}
and boundary condition
\EQ{\label{BCs}
v=0~&\mbox{ on }~\partial\Omega\times(0,T),\\
u=u_0~&\mbox{ on }~\partial\Omega\times(0,T),
}
where $v:\Omega\times[0,T)\to\R^2$ is the fluid velocity, $P:\Omega\times[0,T)\to\R$ is the fluid pressure, $u:\Omega\times[0,T)\to\mathbb{S}^2$ stands for the orientation field of nematic liquid crystal material, $\na\cdot$ denotes the divergence operator, $\na u\odot\na u$ denotes the $2\times2$ matrix given by $(\na u\odot\na u)_{ij}=\na_i u\cdot\na_j u$, and $\mathbb I_2$ is the identity matrix on $\R^2$. The parameter $\epsilon_0>0$  represents the competition between kinetic energy and potential energy. $(v_0, u_0):\Omega\to\R^2\times\mathbb S^2$ is a given
initial data.

\medskip

The system \eqref{LCF} can be viewed as a coupling between the incompressible Navier--Stokes equation and the equation of heat flow of harmonic maps. Many important contributions have been made on the studies to the incompressible Navier--Stokes equation and the equation of heat flow of harmonic maps. For the incompressible Navier--Stokes equation, the existence of global weak solutions to the initial value problem has been established by Leray \cite{Leray1934Acta} and Hopf \cite{Hopf1951Acta}. For comprehensive results regarding the Navier--Stokes equation, we refer the interested reader to Temam \cite{Temam2001book}, Lions \cite{Lions1996book}, Lemari\'{e}-Rieusset \cite{RieussetNS}, Galdi \cite{Galdi}, Seregin \cite{Seregin}, Tsai \cite{Tsai} and the references therein. The fundamental solution of the Stokes system, which is a linearized Navier--Stokes equation, was first established by Solonnikov in \cite{Solo1964}, together with estimates of weak solutions to the Cauchy problem. Solonnikov also derived several estimates of the initial-boundary value problem of the Stokes system in \cite{Solo2001, Solo2002, Solo2003}.  For
the heat flow of harmonic maps, Struwe \cite{Struwe1985CMH} and Chang \cite{Chang1989AnnInstHPANL} established the existence of a unique global weak solution in dimension two, which has at most finitely many singular points. In higher dimensions, the existence of a global weak solution has been proved by Chen and Struwe in \cite{ChenStruwe1989MathZ}, and Chen and Lin in \cite{ChenLin1993CAG}. Examples of finite time blow-up solutions have been constructed by Coron and Ghidaglia in \cite{CoronGhidaglia1989}, and Chen and Ding in \cite{ChenDing1990InventMath} for $n\ge3$. See also \cite{Grotowski1991manumath,Grotowski1993CVPDE} for more finite-time singularity results in dimension three. In dimension two, Chang, Ding and Ye \cite{CDY92JDG} constructed the first example of finite time singularities, which is a $1$-corotational solution in a disk with profile
\[u(x,t)=W\left(\frac{x}{\la(t)}\right)+O(1),\]
where $W$ is the least energy harmonic map
\begin{equation*}
W(y)=\frac{1}{1+|y|^2}\begin{bmatrix}
2y\\
|y|^2-1\\
\end{bmatrix},~y\in\R^2,
\end{equation*}
$O(1)$ is bounded in $H^1$-norm, and $0<\la(t)\to0$ as $t\to T$. Angenent, Hulshof and Matano \cite{AHM2009SIMA} obtained an estimation of the blow-up rate as $\la(t) = o(T-t)$. Using matched asymptotics formal analysis, van den Berg, Hulshof and King \cite{VHK2004SIMA} showed that this rate should be given by
\[\la(t)\sim\kappa\frac{T-t}{|\log(T-t)|^2}\] for some $\kappa>0$. Rapha\"el and Schweyer succeeded in constructing an entire 1-corotational solution with this blow-up rate rigorously \cite{RS2013CPAM}. Recently, Davila, del Pino and Wei \cite{17HMF} constructed non-symmetric finite time blow-up at multiple points and studied its stability by using the {\em inner--outer gluing method}. More precisely, for any given finite set of points in $\Omega$, they constructed solution blowing up exactly at those points simultaneously under suitable initial and boundary conditions. In another aspect, for higher-degree corotational harmonic map heat flow, global existence and blow-up have been investigated in a series of works \cite{GKT08DUKE,GGT09JDE,GNT10CMP,gustafson2017global} and the references therein. For other bubbling phenomena and regularity results of the heat flow of harmonic maps, we refer the readers to the book \cite{LWbook} by Lin and Wang.

\medskip

The nematic liquid crystal flow \eqref{LCF} was first proposed by Lin in \cite{L1989CPAM}, and it is a simplified version of the Ericksen--Leslie model for the hydrodynamics flow of nematic liquid crystal molecular considered by Ericksen \cite{Ericksen1962ARMA} and Leslie \cite{Leslie1968ARMA}. The existence and uniqueness of solutions to \eqref{LCF} has been extensively studied. In a pioneering paper \cite{LL1995CPAM}, Lin and Liu considered the Leslie system of variable length, and established the global existence of weak and classical solutions in dimensions two and three. They also built up the partial regularity theorem for suitable weak solutions of \eqref{LCF} in \cite{LL1996DCDS}, similar to those for the Navier--Stokes equation established by Caffarelli--Kohn--Nirenberg in \cite{CKN1982CPAM}. Later on, Lin, Lin and Wang \cite{LLW2010ARMA} proved the global existence of Leray--Hopf type weak solutions of \eqref{LCF} (see also Hong \cite{Hong2011CVPDE}, Hong--Xin \cite{HongXin2012AdvMath}, Xu--Zhang \cite{XZ2012JDE}, Huang--Lin--Wang \cite{HuangLinWang2014CMP}, Lei--Li--Zhang \cite{LeiLiZhang2014ProcAMS}, Wang--Wang \cite{WangWang2014CVPDE} for relevant results in dimension two). The uniqueness of such weak solution was also proved by Lin and Wang in \cite{LW2010CAM}. In dimension three, Ding and Wen \cite{WenDing2011NonlAnalRWA} proved the existence of a unique local strong solution (see also \cite{MaGongLi2014NonlAnal} for relevant results in dimension three). Subsequently, in the case of smooth and bounded domain in dimension three, Lin and Wang \cite{LW2016CPAM} proved the global existence of weak solutions satisfying the global energy inequality under the assumption that the initial orientation field $u_0(\Omega)\subset \mathbb{S}^2_+$. Blow up criteria have also been established. For instance, Huang and Wang \cite{HW2012CPDE} proposed a criterion for finite time singularity of strong solutions in dimensions two and three. Moreover, Huang, Wang and Wen \cite{HWW2012ARMA} established a blow up criterion for compressible nematic liquid crystal flows in dimension three. Recently, Chen and Yu \cite{CY2017ARMA} constructed global $m$-equivariant solutions in $\R^2$ that the orientation field blows up logarithmically as $t\to+\infty$. See also Lin--Wang \cite{LW2014RSTA} for a survey of some important developments of mathematical studies of nematic liquid crystals.

\medskip

The main concern of this paper is the existence of solutions to the nematic liquid crystal flow \eqref{LCF}, that develop finite time singularities. In the three dimensional case, Huang, Lin, Liu and Wang \cite{HLLW2016ARMA} have constructed two examples of finite time singularity of \eqref{LCF}. The first example is an axisymmetric finite time blow-up solution constructed in a cylindrical domain. (As remarked in [Remark 1.2(c), \cite{HLLW2016ARMA}], this blow-up example does not satisfy the no-slip boundary condition.)  The second example is constructed in a ball for any generic initial data that has small enough energy, and $u_0$ has a non-trivial topology.

\medskip

In this paper, we consider the two-dimensional nematic liquid crystal flow \eqref{LCF}, where the velocity field satisfies no-slip boundary condition, i.e., $v=0$ on $\partial\Omega$. Using the {\em inner--outer gluing method}, we construct a solution $(v,u)$ to problem \eqref{LCF} exhibiting finite time singularity as $t\to T$. To carry out a fixed point argument, we assume that the parameter $\epsilon_0$ is sufficiently small. Under this assumption, \eqref{LCF} is approximately decoupled into an incompressible Navier--Stokes equation and a transported harmonic map heat flow. It is also worth noting that if $v\equiv0$ and $u$ is a solution of the heat flow of harmonic maps that blows up in finite time, then $(v,u)$ is also a solution to \eqref{LCF} that blows up in finite time. Therefore, we are only interested in the case of non-trivial velocity field. In our construction, we choose a non-trivial initial velocity $v_0$ to ensure that the velocity field is non-zero at least in a short time period. See Remark \ref{rmk-SS} for more details.

\medskip

Our main result is stated as follows.

\begin{theorem}\label{thm}
Given $k$ distinct points $q_1,\cdots,q_k\in\Omega$, if $T,\epsilon_0>0$ are sufficiently small, then there exists a smooth initial data $(v_0,u_0)$ such that the short time smooth solution $(v,u)$ to the system \eqref{LCF} blows up exactly at those $k$ points as $t\to T$. More precisely, there exist numbers $\kappa_j^*>0$, $\omega_j^*$ and $u_*\in H^1(\Omega)\cap C(\bar\Omega)$ such that
\begin{equation*}
u(x,t)-u_*(x)-\sum_{j=1}^k Q^1_{\omega_j}Q^2_{\alpha_j}Q^3_{\beta_j} \left[W\left(\frac{x-q_j}{\la_j(t)}\right)-W(\infty)\right]\to 0 ~~\mbox{ as }~t\to T,
\end{equation*}
in $H^1(\Omega)\cap L^\infty(\Omega)$, where the blow-up rate and angles satisfy
\begin{equation*}
\la_j(t)=\kappa_j^*\frac{T-t}{|\log(T-t)|^2}(1+o(1))~\mbox{ as }~t\to T,
\end{equation*}
\begin{equation*}
\omega_j \to \omega_j^{*}, \ \alpha_j\to 0, \ \beta_j \to 0, ~\ \mbox{ as } \ t \to T,
\end{equation*}
and
$Q^1_{\omega}, Q^2_{\alpha}$ and $Q^3_{\beta}$ are rotation matrices defined in \eqref{def-Q}.
In particular, it holds that
\begin{equation*}
|\nabla u(\cdot,t)|^2\,dx\rightharpoonup |\nabla u_*|^2\,dx +8\pi \sum_{j=1}^k \delta_{q_j}~\mbox{ as }~t\to T,
\end{equation*}
as convergence of Radon measures.
Furthermore, the velocity field $v\not\equiv 0$ and satisfies
\begin{equation*}
|v(x,t)|\leq c\sum_{j=1}^k  \frac{\la_j^{\nu_{j}-1}(t)}{1+\left|\frac{x-q_j}{\la_j(t)}\right|}, ~~0<t<T,
\end{equation*}
for some $c>0$ and $0<\nu_j<1$, $j=1,\cdots,k$.
\end{theorem}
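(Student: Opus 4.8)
The plan is to deploy an \emph{inner--outer gluing scheme} for the director field $u$, coupled through the small parameter $\ep_0$ to a forced non-stationary Stokes problem for the velocity $v$. Fix a small $T>0$. For $u$ we build an approximate solution
\[ U_*(x,t)=z_*(x,t)+\sum_{j=1}^k Q_j(t)\Big[W\Big(\tfrac{x-\xi_j(t)}{\la_j(t)}\Big)-W(\I)\Big],\qquad Q_j(t)=Q^1_{\om_j}Q^2_{\al_j}Q^3_{\be_j}, \]
in which the centers $\xi_j(t)\to q_j$, the scales $\la_j(t)\to0$, and the angles $\om_j\to\om_j^*$, $\al_j\to0$, $\be_j\to0$ are modulation parameters to be determined, and $z_*$ is a smooth background map (with values near $\mathbb S^2$) chosen to carry the boundary data $u_0$ on $\pd\Om$ and the prescribed limit $u_*$. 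We then seek the genuine solution in the form $u=U_*+\psi+\sum_{j=1}^k\eta_j\,\ph_j$, where $\eta_j$ is a smooth cutoff supported in the inner region $\{|x-\xi_j|\lec R\la_j\}$, the inner correction $\ph_j=\ph_j(y,t)$ is expressed in the self-similar variable $y=(x-\xi_j)/\la_j$ and taken tangent to $\mathbb S^2$ at $W(y)$, and $\psi$ is a globally defined outer correction, the pieces being arranged so that $|u|\equiv1$. For the fluid we write $v=v_{\mathrm{lin}}+V$, where $v_{\mathrm{lin}}$ solves the homogeneous non-stationary Stokes system in $\Om$ with no-slip boundary values and a chosen non-trivial initial datum $v_0$ concentrated near the points $q_j$, while $V$ carries the stress forcing $-\ep_0\,\na\cdot(\na u\odot\na u-\tfrac12|\na u|^2\mathbb I_2)$ together with the convection $v\cdot\na v$.

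Inserting this ansatz into \eqref{LCF} and organizing by regions yields a system of: (i) $k$ \emph{inner problems} of the schematic shape $\la_j^2\,\pd_t\ph_j=L_W[\ph_j]+\mathcal E_j[\la,\xi,\om,\al,\be]+\mathcal C_j[\psi,v]$ on $\R^2\times(0,T)$, where $L_W$ is the linearized harmonic-map operator about $W$, $\mathcal E_j$ the modulation error and $\mathcal C_j$ the coupling with the outer field and the velocity; (ii) one \emph{outer problem}, a linear transport--heat equation for $\psi$ whose right-hand side is supported away from the concentration set, plus the cutoff mismatch; and (iii) the forced \emph{Stokes problem} for $V$ with zero boundary data. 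I would solve (ii) by parabolic estimates in weighted $L^\I$ spaces adapted to the bubble profiles, and (iii) by Solonnikov's $L^p$ and pointwise estimates for the non-stationary Stokes system in bounded smooth domains \cite{Solo1964,Solo2001,Solo2002,Solo2003}. A crucial simplification for (iii) is that the stress tensor of an exact bubble is divergence free: for a harmonic map $W$ one has $\na\cdot(\na W\odot\na W-\tfrac12|\na W|^2\mathbb I_2)=(\Delta W)\cdot\na W=-|\na W|^2\,(W\cdot\na W)=0$ since $|W|\equiv1$, so the effective forcing on $v$ comes only from lower-order corrections and from $\pd_t$ of the ansatz; inverting the Stokes operator against such a forcing produces exactly the asserted bound $|v(x,t)|\lec\sum_j\la_j^{\nu_j-1}(t)\,(1+|(x-q_j)/\la_j(t)|)^{-1}$ with $0<\nu_j<1$, and $v\not\equiv0$ follows from the choice of $v_0$.

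The heart of the argument is the linear theory for the inner problems. The kernel of $L_W$ is spanned by the generators of the invariances of the harmonic-map equation: the two translations $\pd_{y_1}W,\pd_{y_2}W$ (decaying like $|y|^{-2}$), the dilation $\Lambda W=y\cdot\na W$, and the target rotations giving the modes associated with $\om,\al,\be$. Crucially $\Lambda W$ decays only like $|y|^{-1}$ and fails to be in $L^2(\R^2)$, which is the source of the logarithmic corrections. The inner problem is solvable with the required spatial decay only after imposing orthogonality of the right-hand side against these kernel elements, and those solvability conditions become a coupled ODE system for $\la_j,\xi_j,\om_j,\al_j,\be_j$. Computing the projections — the dilation projection picks up the integral $\int_{|y|<R}|\Lambda W|^2\,dy$, which grows logarithmically in the transition scale $R$, and matching with the outer heat flow, which sees each bubble as a point source of the linear equation, pins down the balance — forces at leading order $\la_j(t)=\kappa_j^*(T-t)|\log(T-t)|^{-2}(1+o(1))$, exactly as for the harmonic map heat flow \cite{17HMF,RS2013CPAM}, while the angle equations decouple to leading order and yield $\om_j\to\om_j^*$, $\al_j,\be_j\to0$, and $\xi_j\to q_j$.

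Finally, one closes the entire coupled system — the inner corrections $\{\ph_j\}$, the outer correction $\psi$, the velocity $V$, and the parameter functions — by a fixed point (contraction) argument in an appropriate weighted product Banach space, choosing $\ep_0$ and $T$ small so that the stress forcing ($O(\ep_0)$), the convective terms $v\cdot\na v$ and $v\cdot\na u$ (quadratically small in the unknowns), and the gluing errors are all subdominant. The main obstacle — and the feature genuinely new beyond \cite{17HMF} — is this two-way coupling: one must propagate the sharp pointwise bound on $v$ back into the inner and outer estimates for $u$ without upsetting the delicate balance of the modulation system; in particular, the transport term $v\cdot\na u$, which in naive self-similar scaling near a bubble is not small, must be shown to perturb the orthogonality conditions only at lower order, which works thanks to the gain $\la_j^{\nu_j}$ in the Stokes bound and the smallness of $\ep_0$. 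Once the fixed point is obtained, the boundary conditions $v=0$ and $u=u_0$ on $\pd\Om$ hold by construction of $v_{\mathrm{lin}}$, $V$ and $z_*$; the measure convergence $|\na u(\cdot,t)|^2\,dx\rightharpoonup|\na u_*|^2\,dx+8\pi\sum_j\de_{q_j}$ follows from $\int_{\R^2}|\na W|^2=8\pi$ together with the energy bookkeeping of the ansatz; and the asserted $H^1(\Om)\cap L^\I(\Om)$ convergence is read off from the decay built into the correction spaces.
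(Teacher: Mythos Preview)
Your proposal is correct and follows essentially the same route as the paper: an inner--outer gluing for $u$ built on rotated bubbles with the full set of modulation parameters $(\la_j,\xi_j,\om_j,\al_j,\be_j)$, a Stokes analysis for $v$ exploiting the divergence-free stress of an exact harmonic map, and a fixed point argument closed by the smallness of $\ep_0$ and $T$. Two minor points of execution differ: the paper decomposes $v$ spatially (an inner piece solved via the Oseen tensor on $\R^2$ and an outer piece via Solonnikov's $W^{2,1}_p$ theory) rather than by initial data, and---most importantly---the coupling you correctly flag forces a sharpened linear theory for the inner problem at mode $-1$ (removing the $\log R$ loss present in \cite{17HMF}), which is the concrete mechanism the paper uses to absorb $v\cdot\na u$; this is also why the extra parameters $\al_j,\be_j$ are introduced, and it is worth naming explicitly rather than leaving implicit.
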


\medskip

Concerning Theorem \ref{thm}, we would like to point out

\medskip

\begin{remark}
~
\begin{itemize}
\item At each blow-up point $q_j\in\Omega$, $1\le j\le k$, the behavior of the velocity field $v$ is precisely
$$|v(x,t)|\leq   c\la_j^{\nu_j-1}(t)+o(1)~\mbox{ for }~\nu_j\in(0,1).$$
Theorem \ref{thm} suggests that $v$ might also blow up in finite time. In fact we conjecture that $ \|v(\cdot, t)\|_{L^\infty} \sim |\log (T-t)| $ as $t\to T$.
The singularity formation of the velocity field is driven by the Ericksen stress tensor $\nabla\cdot(\nabla u\odot\nabla u-\frac12|\nabla u|^2\mathbb I_2)$, which is induced by
the liquid crystal orientation field $u(x,t)$. Namely, $u(x,t)$ plays a role on generating the singular forcing in the incompressible Navier--Stokes equation.
For results of the Navier--Stokes equation with singular forcing in dimension two, we refer to \cite{CS2010}.

\medskip

\item It is well-known that the pressure $P$ can be recovered from the velocity field $v$ and the forcing. See \cite{Galdi} and \cite{Tsai} for instance.
\end{itemize}
\end{remark}

The proof of Theorem \ref{thm} is based on the {\em inner--outer gluing method}, which has been a very powerful tool in constructing solutions in many elliptic problems, see for instance \cite{DKW07CPAM,DKW11Annals,bubbling10JEMS,manifold15JMPA} and the references therein. Also, this method has been successfully applied to various parabolic flows recently, such as the infinite time and finite time blow-ups in energy critical heat equations \cite{Green16JEMS,173D,17type2,del2018sign,tower7D}, singularity formation for the $2$-dimensional harmonic map heat flow \cite{17HMF},  vortex dynamics in Euler flows \cite{18Euler}, and others arising from geometry and fractional context \cite{18type2Yamabeflow,17halfHMF,18fractionalcritical,sire2019singularity}. We refer the interested readers to a survey by del Pino \cite{delPinosurvey} for more results in parabolic settings.

\medskip

The nematic liquid crystal flow \eqref{LCF} is a strongly coupled system of the incompressible Navier--Stokes equation and the transported harmonic map heat flow. In this paper, the construction of the finite time blow-up solution is close in spirit to the singularity formation of the standard two dimensional harmonic map heat flow
\begin{equation}\label{intro-HMF}
\begin{cases}
\partial_t u=\Delta u+|\nabla u|^2 u,~&\mbox{ in }~\Omega\times(0,T),\\
u=u_{0},~&\mbox{ on }~\partial\Omega\times(0,T),\\
u(\cdot,0)=u_0,~&\mbox{ in }~\Omega.\\
\end{cases}
\end{equation}
In \cite{17HMF}, by the {\em inner--outer gluing method}, Davila, del Pino and Wei successfully constructed type II finite time blow-up for the harmonic map heat flow \eqref{intro-HMF}. More precisely, the solution constructed in \cite{17HMF} takes the bubbling form
$$
 |\nabla u(\cdot, t)|^2 \rightharpoonup  |\nabla u_*|^2 + 8\pi \sum_{j=1}^k \delta_{q_j}, ~\mbox{ as }~ t\to  T,
$$
where $u_*\in H^1(\Omega)\cap C (\bar \Omega)$, $(q_1,\ldots, q_k)\in \Omega^k$ are  given $k$ points, and $\delta_{q_j}$ denotes the unit Dirac mass at $q_j$ for $j=1,\cdots,k.$ The construction in \cite{17HMF} consists of finding a good approximate solution based on the 1-corotational harmonic maps and then looking for the inner and outer profiles of the small perturbations. Basically, the inner problem is the linearization around the harmonic map which captures the heart of the singularity formation, while the outer problem is a heat equation coupled with the inner problem.

\medskip

Our construction of a finite time blow-up solution to the nematic liquid crystal flow \eqref{LCF}--\eqref{BCs} relies crucially on the delicate analysis carried out in \cite{17HMF}. The strategy is to regard the term $v\cdot \nabla u$ in the equation for the orientation field $u$ as a perturbation so that the equation for $u$ is basically the harmonic map heat flow, and regard the equation for the velocity field $v$ as the Stokes system by neglecting $v\cdot \nabla v$. After we establish the estimates for the harmonic map heat flow and the Stokes system, we shall show that the terms $v\cdot \nabla u$ and $v\cdot \nabla v$ are indeed small perturbations in the corresponding equations. The existence of a desired blow-up solution will be finally proved by the fixed point argument.

\medskip

In \cite{17HMF}, the parameter functions $\la(t)$, $\xi(t)$, $\omega(t)$, which correspond to the dilation, translation and rotation about $z$-axis, respectively, are required to adjust certain orthogonality conditions to guarantee the existence of desired solutions. As mentioned, the incompressible Navier--Stokes equation and the transported harmonic map heat flow in nematic liquid crystal flow are essentially coupled, which requires more refined estimates especially for the inner problem where the singularity for the orientation field $u$ takes place. To find a better inner solution, we need to add two new parameter functions $\alpha(t)$ and $\beta(t)$ associated to the rotations about $x$ and $y$ axes, respectively, to adjust the orthogonality conditions at mode $-1.$ After this, we then are able to develop a new linear theory at mode $-1$ which is sufficient to construct the desired solution to problem \eqref{LCF}. See Section \ref{sec-HMF} for details.

\medskip

Very surprisingly, our construction suggests that the incompressible Navier--Stokes equation and transported harmonic map heat flow are strongly coupled through the inner problem of $u$ where the singularity occurs. In other words, the two systems are {\em fully coupled}, as one can see from the following scaling invariance for the system (\ref{LCF})
$$ (v_\lambda (x, t), P_\lambda (x, t), u_\lambda (x, t))= (\lambda v(\lambda t, \lambda^2 t), \lambda^2 P(\lambda t, \lambda^2 x), u(\lambda x, \lambda^2 t)).$$ 
This is the main reason why we have to add the extra parameter functions $\alpha(t)$, $\beta(t)$ and improve the linear theory for the inner problem. Moreover, the assumption that $\epsilon_0\ll 1$ in \eqref{LCF} is required to make the system {\em less coupled}. This leads us to expect that bifurcation phenomena may exist regarding the behavior of $\epsilon_0$. The possible blow-up for the velocity field $v$ is triggered by the singularity of the orientation field $u$ due to the strong coupling. The {\em inner--outer gluing method} carried out in this paper is parabolic in nature and does not rely on any symmetry of the solution, which enables us to construct non-radial blow-up at multiple points in this challenging setting.

\medskip

The paper is organized as follows. In Section \ref{sec-HMF}, we shall discuss the singularity formation for the two dimensional harmonic map heat flow, and develop a new method to improve the linear theory of the inner problem. In Section \ref{sec-SS}, we develop the linear theory for the Stokes system. In Section \ref{sec-LCF}, using
the {\em inner--outer gluing method}, we construct a finite time blow-up solution to the nematic liquid crystal flow by the fixed point argument.

\bigskip
\noindent{{\bf Notation}}. \, Throughout the paper, we shall use the symbol  $``\,\lesssim\,"$ to denote $``\,\leq\, C\,"$ for a positive constant $C$ independent of $t$ and $T$. Here $C$ might be different from line to line.

\medskip


\section{Singularity formation for the harmonic map heat flow in dimension two}\label{sec-HMF}

\medskip

Closely related to the harmonic map heat flow in dimension two, the equation for the orientation field $u$ can be regarded as a transported version with drift term. For the two dimensional harmonic map heat flow $u:\Omega\times [0,T)\to\mathbb S^2$:
\begin{equation*}
\begin{cases}
\partial_t u=\Delta u+|\nabla u|^2 u,~&\mbox{ in }~\Omega\times(0,T),\\
u=u_{0},~&\mbox{ on }~\partial\Omega\times(0,T),\\
u(\cdot,0)=u_0,~&\mbox{ in }~\Omega,
\end{cases}
\end{equation*}
we first introduce some notations and preliminaries.


\subsection{Stationary problem: the equation of harmonic maps and its linearization}

\medskip

The equation of harmonic maps for $U: \R^2\to \mathbb S^2$ is the quasilinear elliptic system
\begin{equation}\label{HM}
\Delta U +|\nabla U|^2 U=0~\mbox{ in }~\R^2.
\end{equation}
For $\la>0$, $\xi\in\R^2$, $\omega,\alpha,\beta\in\R$, we consider the family of solutions to \eqref{HM} given by the following $1$-corotational harmonic maps
\begin{equation*}
U_{\la,\xi,\omega,\alpha,\beta}(x)=Q^1_{\omega}Q^2_{\alpha}Q^3_{\beta}W\left(\frac{x-\xi}{\la}\right), ~ x\in\R^2,
\end{equation*}
where
\begin{equation}
\label{def-Q}
Q^1_{\omega}:=\begin{bmatrix}
       \cos\omega & -\sin\omega & 0 \\[0.3em]
       \sin \omega & \cos\omega  & 0 \\[0.3em]
       0 & 0 & 1\\
     \end{bmatrix}
,~~
Q^2_{\alpha}:=\begin{bmatrix}
       1 & 0 & 0 \\[0.3em]
       0 & \cos\alpha & -\sin\alpha \\[0.3em]
       0 & \sin\alpha & \cos\alpha\\
     \end{bmatrix}
,~~
Q^3_{\beta}:=\begin{bmatrix}
       \cos\beta & 0 & \sin\beta \\[0.3em]
       0 & 1 & 0 \\[0.3em]
       -\sin\beta & 0 & \cos\beta\\
     \end{bmatrix}
\end{equation}
are the rotation matrices about $z$, $x$ and $y$ axes, respectively, and $W$ is the least energy harmonic map
$$W(y)=\frac{1}{1+|y|^2}\begin{bmatrix}
2y\\
|y|^2-1\\
\end{bmatrix},~y\in\R^2.$$
In the polar coordinates $y=\rho e^{i\theta}$, $W(y)$ can be represented as
\begin{equation*}
W(y)=\begin{bmatrix}
e^{i\theta} \sin w(\rho)\\
\cos w(\rho)\\
\end{bmatrix}
,~w(\rho)=\pi-2 \arctan (\rho),
\end{equation*}
and we have
\begin{equation*}
w_{\rho}=-\frac{2}{\rho^2+1},~~\sin w=-\rho w_{\rho}=\frac{2\rho}{\rho^2+1},~~\cos w=\frac{\rho^2-1}{\rho^2+1}.
\end{equation*}
For simplicity, we write
\begin{equation*}
Q_{\omega,\alpha,\beta}:=Q^1_{\omega}Q^2_{\alpha}Q^3_{\beta}.
\end{equation*}
The linearization of the harmonic map operator around $W$ is the elliptic operator
\begin{equation}\label{def-linearization}
L_W[\phi]=\Delta_y \phi + |\nabla W(y)|^2 \phi +2(\nabla W(y)\cdot\nabla\phi) W(y),
\end{equation}
whose kernel functions are given by
\begin{equation}\label{kernels}
\left\{
\begin{aligned}
&Z_{0,1}(y)=\rho w_{\rho}(\rho) E_1(y),\\
&Z_{0,2}(y)=\rho w_{\rho}(\rho) E_2(y),\\
&Z_{1,1}(y)=w_{\rho}(\rho)[\cos \theta E_1(y)+\sin\theta E_2(y)],\\
&Z_{1,2}(y)=w_{\rho}(\rho)[\sin \theta E_1(y)-\cos\theta E_2(y)],\\
&Z_{-1,1}(y)=\rho^2 w_{\rho}(\rho)[\cos \theta E_1(y)-\sin \theta E_2(y)],\\
&Z_{-1,2}(y)=\rho^2 w_{\rho}(\rho)[\sin \theta E_1(y)+\cos \theta E_2(y)],\\
\end{aligned}
\right.
\end{equation}
where the vectors
\begin{equation*}
E_1(y)=\begin{bmatrix}e^{i\theta}\cos w(\rho)\\ -\sin w(\rho)\\ \end{bmatrix},~~E_2(y)=\begin{bmatrix}i e^{i\theta}\\ 0\\ \end{bmatrix}
\end{equation*}
form an orthonormal basis of the tangent space $T_{W(y)} \mathbb{S}^2$. We see that
$$L_W[Z_{i,j}]=0 ~~ {~\rm{ for }} ~ i=\pm 1,0, ~j=1,2.$$
Note that
\begin{equation*}
L_U[\varphi]=\la^{-2} Q_{\omega,\alpha,\beta} L_W[\phi],~~\varphi(x)=\phi(y),~~y=\frac{x-\xi}{\la}.
\end{equation*}
In the sequel, it is of significance to compute the action of $L_U$ on functions whose value is orthogonal to $U$ pointwisely. Define
\begin{equation*}
\Pi_{U^{\perp}}\varphi:=\varphi-(\varphi\cdot U)U.
\end{equation*}
We invoke several useful formulas proved in \cite[Section 3]{17HMF}:
\begin{equation*}
L_U[\Pi_{U^{\perp}}\Phi]=\Pi_{U^{\perp}}\Delta \Phi +\tilde L_{U}[\Phi],
\end{equation*}
where
\begin{equation}\label{def-tildeL}
\tilde L_{U}[\Phi]:=|\nabla U|^2\Pi_{U^{\perp}}\Phi-2\nabla(\Phi\cdot U)\nabla U,
\end{equation}
with
$$\nabla(\Phi\cdot U)\nabla U=\partial_{x_j} (\Phi\cdot U) \partial_{x_j} U.$$
In the polar coordinates
$$\Phi(x)=\Phi(r,\theta),~~x=\xi+r e^{i\theta},$$
\eqref{def-tildeL} can be expressed as (see \cite[Section 3]{17HMF})
\begin{equation*}\label{form1-tildeL}
\tilde L_U[\Phi]=-\frac{2}{\la} w_{\rho}(\rho)\left[(\Phi_r\cdot U) Q_{\omega,\alpha,\beta} E_1 -\frac1r (\Phi_{\theta}\cdot U)Q_{\omega,\alpha,\beta} E_2\right],~~r=\la \rho.
\end{equation*}
Assume that $\Phi(x): \Omega\to \mathbb C\times \R$ is a $C^1$ function in the form
\begin{equation}\label{notanota}
\Phi(x)=\begin{bmatrix}
\varphi_1(x)+i\varphi_2(x)\\
\varphi_3(x)\\
\end{bmatrix}.
\end{equation}
If we write
$$\varphi=\varphi_1 +i\varphi_2,~~\bar\varphi=\varphi_1 -i\varphi_2$$
and
$${\rm div}\varphi=\partial_{x_1} \varphi_1 +\partial_{x_2} \varphi_2,~~{\rm curl} \varphi=\partial_{x_1} \varphi_2 -\partial_{x_2} \varphi_1,$$
then we have the following formula (see \cite[Section 3]{17HMF})
\EQ{\label{Ltilde211}
\tilde L_U[\Phi]=[\tilde L_U]_0[\Phi]+[\tilde L_U]_1[\Phi]+[\tilde L_U]_2[\Phi],
}
where
\begin{equation}
\left\{\label{Ltilde222}
\begin{aligned}
~ [\tilde L_U]_0[\Phi]=&~ \la^{-1} \rho w^2_{\rho} [{\rm div}(e^{-i\omega}\varphi) Q_{\omega,\alpha,\beta} E_1+{\rm curl}(e^{-i\omega}\varphi) Q_{\omega,\alpha,\beta} E_2],\\
[\tilde L_U]_1[\Phi]=&~ -2\la^{-1} w_{\rho}\cos w [(\partial_{x_1}\varphi_3)\cos\theta+(\partial_{x_2}\varphi_3)\sin\theta] Q_{\omega,\alpha,\beta} E_1\\
&~-2\la^{-1} w_{\rho}\cos w [(\partial_{x_1}\varphi_3)\sin\theta-(\partial_{x_2}\varphi_3)\cos\theta] Q_{\omega,\alpha,\beta} E_2,\\
[\tilde L_U]_2[\Phi]=&~ \la^{-1} \rho w^2_{\rho}  [{\rm div}(e^{i\omega}\bar\varphi)\cos 2\theta-{\rm curl}(e^{i\omega}\bar\varphi)\sin 2\theta] Q_{\omega,\alpha,\beta} E_1\\
&~+\la^{-1}\rho w^2_{\rho}  [{\rm div}(e^{i\omega}\bar\varphi)\sin 2\theta+{\rm curl}(e^{i\omega}\bar\varphi)\cos 2\theta] Q_{\omega,\alpha,\beta} E_2.
\end{aligned}
\right.
\end{equation}
If we assume
\begin{equation*}
\Phi(x)=\begin{bmatrix}
\phi(r) e^{i\theta}\\
0\\
\end{bmatrix}
,~~x=\xi+r e^{i\theta},~~r=\la \rho,
\end{equation*}
where $\phi(r)$ is complex-valued, then we have the following formula
\begin{equation*}
\tilde L_U[\Phi]=\frac{2}{\la} w^2_{\rho}(\rho) \left[{\rm Re}(e^{-i\omega} \partial_r \phi(r)) Q_{\omega,\alpha,\beta} E_1 + \frac1r {\rm Im}(e^{-i\omega}\phi(r)) Q_{\omega,\alpha,\beta} E_2\right].
\end{equation*}
If $\Phi$ is of the form
\begin{equation*}
\Phi(x)=\varphi_1(\rho,\theta) Q_{\omega,\alpha,\beta} E_1+\varphi_2(\rho,\theta) Q_{\omega,\alpha,\beta} E_2,~x=\xi+\la\rho e^{i\theta}
\end{equation*}
in the polar coordinates, then the linearized operator $L_U$ acting on $\Phi$ can be expressed as (see \cite[Section 3]{17HMF})
\begin{equation*}
\begin{aligned}
L_U[\Phi]=&~\la^{-2}\left(\partial_{\rho\rho}\varphi_1+\frac{\partial_{\rho} \varphi_1}{\rho}+\frac{\partial_{\theta\theta} \varphi_1}{\rho^2}+(2w_{\rho}^2-\frac{1}{\rho^2}) \varphi_1-\frac{2}{\rho^2}\partial_{\theta} \varphi_2 \cos w\right) Q_{\omega,\alpha,\beta} E_1\\
&~+\la^{-2}\left(\partial_{\rho\rho}\varphi_2+\frac{\partial_{\rho} \varphi_2}{\rho}+\frac{\partial_{\theta\theta} \varphi_2}{\rho^2}+(2w_{\rho}^2-\frac{1}{\rho^2}) \varphi_2+\frac{2}{\rho^2}\partial_{\theta} \varphi_1 \cos w\right) Q_{\omega,\alpha,\beta} E_2.\\
\end{aligned}
\end{equation*}
In next section, we shall find proper approximate solutions to the harmonic map heat flow
based on the $1$-corotational harmonic maps, and evaluate the error.

\medskip

\subsection{Approximate solution and error estimates}

\medskip

We now consider the harmonic map heat flow
\begin{equation}\label{HMF}
\begin{cases}
\partial_t u=\Delta u+|\nabla u|^2 u,~&\mbox{ in }~\Omega\times(0,T),\\
u=u_{0},~&\mbox{ on }~\partial\Omega\times(0,T),\\
u(\cdot,0)=u_0,~&\mbox{ in }~\Omega,\\
\end{cases}
\end{equation}
where $u:\bar\Omega\times(0,T)\to \mathbb S^2$, and $u_0:\bar\Omega\to\mathbb S^2$ is a given smooth map. For notational simplicity, we shall only carry out the construction in the single bubble case $k=1$ and mention the minor changes for the general case when needed.
We define the error operator
\begin{equation*}\label{def-mS}
S[u]=-\partial_t u +\Delta u + |\nabla u|^2 u.
\end{equation*}
We shall look for solution $u(x,t)$ to problem \eqref{HMF} which at leading order takes the form
\begin{equation}\label{def-approx}
U(x,t):=U_{\la(t),\xi(t),\omega(t),\alpha(t),\beta(t)}=Q_{\omega(t),\alpha(t),\beta(t)}W\left(\frac{x-\xi(t)}{\la(t)}\right).
\end{equation}
Here $\la(t)$, $\xi(t)$, $\omega(t)$, $\alpha(t)$ and $\beta(t)$ are parameter functions of class $C^1((0,T))$ to be determined later.
To get a desired blow-up solution, we assume
\begin{equation*}
\la(t)\to 0,~~\xi(t)\to q ~\mbox{ as }~t\to T,
\end{equation*}
where $q$ is a given point in $\Omega$.


A useful observation is that as long as the constraint $|u|=1$ is kept for all $t\in (0,T)$ and $u=U+v$
where the perturbation $v$ is uniformly small, say, $|v|\leq \frac12$,  then for $u$ to solve \eqref{HMF}, it suffices that
\begin{equation}\label{usefulobserv}
S(U+v)=b(x,t) U
\end{equation}
for some scalar function $b$. Indeed, since $|u|=1$, we get
\begin{equation*}
b(U\cdot u)= S(u)\cdot u=-\frac12 \frac{d}{dt} |u|^2+\frac12 \Delta |u|^2=0.
\end{equation*}
Thus $b\equiv 0$ follows from $U\cdot u\geq \frac12$.

We look for the small perturbation $v(x,t)$ with $|U+v|=1$ in the form
\begin{equation*}
v=\Pi_{U^{\perp}}\varphi +a(\Pi_{U^{\perp}} \varphi) U,
\end{equation*}
where $\varphi$ is an arbitrarily small perturbation with values in $\R^3$, and
$$\Pi_{U^{\perp}}\varphi:=\varphi-(\varphi\cdot U)U,\quad a(\zeta)=\sqrt{1-|\zeta|^2}-1.$$
By $\Delta U+|\nabla U|^2 U=0$, we compute
$$S(U+\Pi_{U^{\perp}}\varphi+aU)=-U_t-\partial_t \Pi_{U^{\perp}}\varphi + L_U(\Pi_{U^{\perp}}\varphi) +N_U(\Pi_{U^{\perp}}\varphi) + c(\Pi_{U^{\perp}}\varphi) U,$$
where for $\zeta =\Pi_{U^{\perp}}\varphi$, $a=a(\zeta)$,
\begin{align*}
L_U(\zeta)=&~\Delta \zeta+|\nabla U|^2\zeta+2(\nabla U\cdot \nabla \zeta)U,
\\
N_U( \zeta )
=&~
\big[
2 \nabla (aU)\cdot \nabla (U+ \zeta  )  + 2 \nabla U \cdot \nabla \zeta   + |\nabla \zeta  |^2
+ |\nabla (a U ) |^2 \,
\big] \zeta
- aU_t + 2\nabla a \cdot\nabla U,
\\
c(\zeta)=&~\Delta a -a_t +(|\nabla(U+\zeta+aU)|^2-|\nabla U|^2)(1+a)-2\nabla U\cdot \nabla \zeta.
\end{align*}
Since we just need to have an equation in the form \eqref{usefulobserv} satisfied, we obtain that
\begin{equation}\label{HMF-u}
u=U+\Pi_{U^{\perp}}\varphi +a(\Pi_{U^{\perp}}\varphi) U
\end{equation}
solves \eqref{HMF} if $\varphi$ satisfies
\begin{equation}\label{eqn-varphi1}
-U_t-\partial_t \Pi_{U^{\perp}}\varphi + L_U(\Pi_{U^{\perp}}\varphi) +N_U(\Pi_{U^{\perp}}\varphi) + b(x,t) U=0
\end{equation}
for some scalar function $b(x,t)$. The strategy for constructing $\varphi$ is based on the {\em inner--outer gluing method}. We decompose $\varphi$ in \eqref{HMF-u} into inner and outer profiles
$$\varphi=\varphi_{in}+\varphi_{out},$$
where $\varphi_{in}$, $\varphi_{out}$ solve the inner and outer problems we shall describe below.
In terms of $\varphi_{in}$ and $\varphi_{out}$, equation \eqref{eqn-varphi1} is reduced to
\begin{equation}\label{331331331}
-\partial_t \varphi_{in} + L_U[\varphi_{in}]+\tilde L_U[\varphi_{out}]-\Pi_{U^{\perp}}[\partial_t \varphi_{out}-\Delta \varphi_{out}+U_t]+N_U(\varphi_{in}+\Pi_{U^{\perp}}\varphi_{out})+(\varphi_{out}\cdot U) U_t +b U=0.
\end{equation}
The inner solution $\varphi_{in}$ will be assumed to be supported only near $x=\xi(t)$ and better expressed in the scaled variable $y=\frac{x-\xi(t)}{\la(t)}$ with zero initial condition and $\varphi_{in}\cdot U=0$ so that $\Pi_{U^{\perp}}\varphi_{in}=\varphi_{in}$, while the outer solution $\varphi_{out}$ will consist of several parts whose role is essentially to satisfy \eqref{331331331} in the region away from the concentration point $x=\xi(t)$.

For the outer problem, since we want the size of the error to be small, we shall add three corrections $\Phi^0$, $\Phi^{\alpha}$ and $\Phi^{\beta}$ which depend on the parameter functions $\la(t)$, $\xi(t)$, $\omega(t)$, $\alpha(t)$, $\beta(t)$ such that
$$\Pi_{U^{\perp}}[\partial_t (\Phi^0+\Phi^{\alpha}+\Phi^{\beta})-\Delta (\Phi^0+\Phi^{\alpha}+\Phi^{\beta})+U_t]$$
gets concentrated near $x=\xi(t)$ by eliminating the leading orders in the first error $U_t$ associated to the dilation and rotations about $x$, $y$ and $z$ axes. We write
\begin{equation*}
\varphi_{out}(x,t)=\Psi^*(x,t)+\Phi^0(x,t)+\Phi^{\alpha}(x,t)+\Phi^{\beta}(x,t),
\end{equation*}
where
$$\Psi^*=\psi+Z^*$$
with $Z^*:\Omega\times(0,\infty)\to\mathbb R^3$ satisfying
\begin{equation*}
\begin{cases}
\partial_t Z^*=\Delta Z^*,~&\mbox{ in }\Omega\times(0,\infty),\\
Z^*(\cdot,t)=0,~&\mbox{ on }\partial\Omega\times(0,\infty),\\
Z^*(\cdot,0)=Z^*_0,~&\mbox{ in }\Omega.\\
\end{cases}
\end{equation*}
For the inner problem, we define
$$\varphi_{in}(x,t)=\eta_R Q_{\omega,\alpha,\beta}\phi(y,t)$$
with
$$\eta_R(x,t)=\eta\left(\frac{|x-\xi(t)|}{\la(t)R(t)}\right),~y=\frac{x-\xi(t)}{\la(t)},~\eta(s)=\begin{cases}1,~&\mbox{ for }s<1,\\0,~&\mbox{ for }s>2,\end{cases}$$
where $\phi(y,t)$ satisfies $\phi(\cdot,0)=0$ and $\phi(\cdot,t)\cdot W=0$, and $R(t)>0$ is determined later. Then equation \eqref{eqn-varphi1} becomes
\begin{align}
 \label{eqsys1}
0 & =  \la^{-2}  \eta_R  Q_{\omega,\alpha,\beta}  [- \la^2 \phi_t +  L_ W  [\phi ] + \la^2  Q_{\omega,\alpha,\beta}^{-1} \tilde L_U [\Psi^*] ]
\\
\nonumber
& \quad
+ \eta_R Q_{\omega,\alpha,\beta}( \la^{-1}\dot\la  y\cdot \nabla_y \phi
+ \la^{-1} \dot\xi \cdot\nabla_y \phi  -  (Q_{\omega,\alpha,\beta}^{-1}\frac{d}{dt}Q_{\omega, \alpha,\beta}) \phi
)
\\
\nonumber
& \quad
+      \tilde L_U  [ \Phi^0+\Phi^{\alpha}+\Phi^{\beta}  ]  -
 \Pi_{U ^\perp} [\pd_t (\Phi^0+\Phi^{\alpha}+\Phi^{\beta}) -\Delta_x (\Phi^0+\Phi^{\alpha}+\Phi^{\beta})  + U_t ]
\\
\nonumber
&\quad- \pd_t \Psi^* +\Delta \Psi^* +  (1-\eta_R) \tilde L_U [\Psi^*] +  Q_{\omega,\alpha,\beta}[(\Delta_x \eta_R) \phi + 2  \nabla_x \eta_R \nabla_x \phi - (\partial_t  \eta_R) \phi]
\\
\nonumber
& \quad +
N_U( \eta_R Q_{\omega,\alpha,\beta} \phi  + \Pi_{U^\perp}( \Phi^0+\Phi^{\alpha}+\Phi^{\beta} +\Psi^*) ) + ((\Psi^*+ \Phi^0+\Phi^{\alpha}+\Phi^{\beta})\cdot U)U_t + b U .
\end{align}

We now give the precise definitions of $\Phi^0$, $\Phi^{\alpha}$, $\Phi^{\beta}$, and estimate the error
$$ \tilde L_U  [ \Phi^0+\Phi^{\alpha}+\Phi^{\beta}  ]  -
 \Pi_{U ^\perp} [\pd_t (\Phi^0+\Phi^{\alpha}+\Phi^{\beta}) -\Delta_x (\Phi^0+\Phi^{\alpha}+\Phi^{\beta})  + U_t ].$$
We shall choose $\Phi^0$, $\Phi^{\alpha}$, $\Phi^{\beta}$ in a way such that
$$\pd_t (\Phi^0+\Phi^{\alpha}+\Phi^{\beta}) -\Delta_x (\Phi^0+\Phi^{\alpha}+\Phi^{\beta})  + U_t \approx 0
~~{\rm{for}}~~ |x-\xi|\gg \la$$
so that the error in the outer problem is of smaller order.

The error of the approximate solution defined in \eqref{def-approx} is
\begin{equation*}
\mathcal S[U]=-\partial_t U=-\big[\underbrace{\dot\la \pd_{\la} U + \dot \omega \pd_{\omega} U}_{:=\mathcal E_0} +\underbrace{ \dot\xi \cdot \pd_{\xi} U}_{:=\mathcal E_1} +\underbrace{\dot\alpha \pd_{\alpha} U +\dot\beta \pd_{\beta}U}_{:=\mathcal E_{-1}} \big]
\end{equation*}
where
\begin{equation*}
\left\{
\begin{aligned}
&\pd_{\la} U(x)=\la^{-1} Q_{\omega,\alpha,\beta} Z_{0,1}(y)\\
&\pd_{\omega} U(x)=  Q_{\omega,\alpha,\beta} Z_{0,2}(y)+Q_{\omega,\alpha,\beta}(A_{\alpha,\beta}-J_1)W(y)\\
&\pd_{\xi_1} U(x) = \la^{-1}  Q_{\omega,\alpha,\beta} Z_{1,1}(y)\\
&\pd_{\xi_2} U(x) = \la^{-1}  Q_{\omega,\alpha,\beta} Z_{1,2}(y)\\
&\pd_{\alpha} U(x)=  \frac12 Q_{\omega,\alpha,\beta} \left[ Z_{-1,2}(y)+ Z_{1,2}(y)\right]+Q_{\omega,\alpha,\beta}(A_{\beta}-J_2)W(y)\\
&\pd_{\beta} U(x)=  -\frac12 Q_{\omega,\alpha,\beta} \left[Z_{-1,1}(y)+Z_{1,1}(y)\right]\\
\end{aligned}
\right.
\end{equation*}
with $Z_{i,j}$ defined in \eqref{kernels} for $i=0,\pm 1$, $j=1,2$,
\begin{equation}\label{def-J1J1}
A_{\alpha,\beta}=\begin{bmatrix}
0 & -\cos\alpha\cos\beta & \sin\alpha \\
\cos\alpha\cos\beta & 0 & \cos\alpha\sin\beta \\
-\sin\alpha & -\cos\alpha\sin\beta & 0 \\
\end{bmatrix}
,~J_1=\begin{bmatrix}
0 & -1 & 0 \\
1 & 0 & 0 \\
0 & 0 & 0 \\
\end{bmatrix},
\end{equation}
and
\begin{equation*}
A_{\beta}=\begin{bmatrix}
0 & -\sin\beta & 0 \\
\sin\beta & 0 & -\cos\beta \\
0 & \cos\beta & 0 \\
\end{bmatrix}
,~J_2=\begin{bmatrix}
0 & 0 & 0 \\
0 & 0 & -1 \\
0 & 1 & 0 \\
\end{bmatrix}.
\end{equation*}
It is worth mentioning that $A_{\alpha,\beta}-J_1=o(1)$ and $A_\beta-J_2=o(1)$ as $\alpha,\beta\ll1$. Writing $y=\frac{x-\xi}{\la}=\rho e^{i\theta}$, we have
\begin{align*}
\mathcal E_0(x,t) =&~ -Q_{\omega,\alpha,\beta}\left[\dot\la \la^{-1} \rho w_{\rho}(\rho) E_1(y)+\dot\omega \rho w_{\rho}(\rho) E_2(y)\right],\\
\mathcal E_1(x,t) =&~ -\dot \xi_1 \la^{-1} w_{\rho}(\rho) Q_{\omega,\alpha,\beta} \left[\cos\theta E_1(y)+\sin\theta E_2(y)\right]\\
&~ -\dot\xi_2 \la^{-1} w_{\rho}(\rho) Q_{\omega,\alpha,\beta}\left[\sin\theta E_1(y)-\cos\theta E_2(y)\right].
\end{align*}
Notice that the slow decaying part of the error $\mathcal S[U]$ consists of
\begin{equation*}
\mathcal E_0(x,t)=-\frac{2r}{r^2+\la^2}\left(\dot\la Q_{\omega,\alpha,\beta} E_1+\la\dot\omega Q_{\omega,\alpha,\beta} E_2\right)\approx -\frac{2r}{r^2+\la^2}\begin{bmatrix} (\dot\la+i \la\dot \omega) e^{i(\theta+\omega)}\\ 0\\ \end{bmatrix}
\end{equation*}
and
\begin{equation*}
\begin{aligned}
\mathcal E_{-1}(x,t) =&~ Q_{\omega,\alpha,\beta}\left[ \frac{\dot\alpha}2[Z_{-1,2}(y)+Z_{1,2}(y)]+\dot\alpha(A_\beta-J_2)W-\frac{\dot\beta}2[Z_{-1,1}(y)+Z_{1,1}(y)]\right]\\
:=&~ \mathcal E_{-1,2}+\mathcal E_{-1,1},
\end{aligned}
\end{equation*}
where
\EQN{
\mathcal E_{-1,2}&=Q_{\omega,\alpha,\beta} \frac{\dot\alpha}{1+\rho^2} \begin{bmatrix}
-2\rho\sin\beta\sin\theta\\
2\rho\sin\beta\cos\theta-(\rho^2-1)\cos\beta\\
2\rho\cos\beta\sin\theta\\
\end{bmatrix}
}
and
\begin{equation*}
\mathcal E_{-1,1}=Q_{\omega,\alpha,\beta}\frac{\dot\beta}{1+\rho^2} \begin{bmatrix}
\rho^2-1\\
0\\
-2\rho\cos\theta\\
\end{bmatrix}.
\end{equation*}
In the sequel, we write
\EQN{
p(t)=\la(t) e^{i\omega(t)}.
}
Then
\EQN{
-\frac{2r}{r^2+\la^2}\begin{bmatrix} (\dot\la+i \la\dot \omega) e^{i(\theta+\omega)}\\ 0\\ \end{bmatrix}=-\frac{2r}{r^2+\la^2}\begin{bmatrix} \dot p(t) e^{i\theta}\\ 0\\ \end{bmatrix}:=\tilde{\mathcal E}_0(x,t).
}
To reduce the size of $\mathcal S[U]$, we add corrections
\begin{equation}\label{def-corrections}
\Phi^0[p,\xi]:=\begin{bmatrix}
\varphi^0(r,t) e^{i\theta}\\
0\\
\end{bmatrix}
,~\Phi^{\alpha}=Q_{\omega,\alpha,\beta}\begin{bmatrix}0\\\alpha(t)\\0\end{bmatrix}
,~\Phi^{\beta}=Q_{\omega,\alpha,\beta}\begin{bmatrix}-\beta(t)\\0\\1\end{bmatrix},
\end{equation}
where
\begin{equation*}
\varphi^0(r,t)=-\int_{-T}^t r \dot p(s)k(z(r),t-s) ds
\end{equation*}
with
\begin{equation*}
z(r)=\sqrt{r^2+\la^2},~k(z,t)=2\frac{1-e^{-\frac{z^2}{4t}}}{z^2}.
\end{equation*}
By direct computations, the new error produced by $\Phi^0$ is
$$
\Phi^0_t - \Delta_x \Phi^0  + \ttt \EE_0    =
\ttt \RR_0 +\ttt \RR_1 ,  \quad \ttt \RR_0 =  \begin{bmatrix} \RR_0   \\ 0 \end{bmatrix}  ,\quad
\ttt\RR_1 =  \begin{bmatrix} \RR_1   \\ 0 \end{bmatrix}
$$
where
\[
\RR_0:=  - re^{i\theta}   \frac {\la^2}{z^4} \int_{-T}^t  \dot p(s)  ( z{k_z} - z^2 k_{zz}) (z(r),t-s) \, ds
\]
and
\begin{align*}
\RR_1 & :=
- e^{ i\theta}  {\rm Re}\,( e^{-i\theta} \dot \xi(t))
 \int_{-T}^t  \dot p(s) \, k(z(r),t-s) \, ds
\\
&\qquad
+  \frac r{z^2} e^{i\theta} \, (\la\dot\la(t)  -  {\rm Re}\,( re^{i\theta} \dot\xi(t)) )
\int_{-T}^t  \dot p(s) \ {zk_z}(z(r),t-s)\,  ds.
\end{align*}
Observe that $\RR_1$ is of smaller order. Moreover, we can evaluate
\begin{align*}
& ~\quad
\ttt L_U[\Phi^0]  + \Pi_{U^\perp} [ -U_t + \Delta \Phi^0 -\Phi^0_t ]
\\
 & =   \ttt L_U[\Phi^0]  -\EE_1 + \Pi_{U^\perp} [\ttt \EE_0] - \EE_0  -
\Pi_{U^\perp} [\ttt \RR_0]   -  \Pi_{U^\perp} [\ttt \RR_1]  -\EE_{-1}
\\
&=  \KK_{0}[p,\xi]  + \KK_1[p,\xi] -\Pi_{U^\perp} [\ttt \RR_1]  -\EE_{-1}
\end{align*}
where
\begin{align*}
\KK_{0}[p,\xi] =  \KK_{01}[p,\xi] + \KK_{02}[p,\xi]
\end{align*}
with
\begin{align}
\label{K01}
\KK_{01}[p,\xi]
:= - \frac {2}{\la} \rho w_\rho^2
\int_{-T} ^t  \left [ {\rm Re  } \,( \dot p(s) e^{-i\omega(t)} )   Q_{\omega,\alpha,\beta} E_1+
 {\rm Im  } \,( \dot p(s) e^{-i\omega(t)} ) Q_{\omega,\alpha,\beta} E_2   \right ]
\cdot  k(z,t-s)  \, ds
\end{align}
\begin{align}
\nonumber
\KK_{02}[p,\xi]
&
:=  \frac 1{\la} \rho w_\rho^2  \left [  {\dot\la}
-
\int_{-T} ^t  {\rm Re  } \,( \dot p(s) e^{-i\omega(t)} ) r k_z(z,t-s) z_r \, ds\, \right]  Q_{\omega,\alpha,\beta} E_1
\\
\nonumber
&~\quad
-   \frac{1}{4\lambda} \rho w_\rho^2 \cos w  \left [  \int_{-T}^t   {\rm Re}\, ( \dot p(s)e^{-i\omega(t) }  )
\, ( z{k_z} - z^2 k_{zz}) (z,t-s)\, ds\, \right ] Q_{\omega,\alpha,\beta} E_1
\\
\label{K02}
&~\quad
-    \frac{1}{4\lambda} \rho w_\rho^2  \left [  \int_{-T}^t   {\rm Im }\, ( \dot p(s)e^{-i\omega(t) }  )
\, ( z{k_z} - z^2 k_{zz}) (z,t-s)\, ds\,  \right ]  Q_{\omega,\alpha,\beta} E_2  ,
\end{align}
\begin{align}
\label{K1}
\KK_{1}[p,\xi]
& :=
\frac 1\la  w_\rho \, \big [
\Re \big (  (\dot  \xi_1 - i \dot \xi_2)  e^{i\theta } \big ) Q_{\omega,\alpha,\beta} E_1
+ \Im \big(  (\dot  \xi_1 - i \dot \xi_2)  e^{i\theta } \big ) Q_{\omega,\alpha,\beta} E_2       \big ].
\end{align}

Next we consider the new error estimates produced by $\Phi^{\alpha}$ and $\Phi^{\beta}$. It is obvious that $\tilde L_U[\Phi^\alpha]=0$ and $\tilde L_U[\Phi^\beta]=0$. Direct computations show that
\begin{equation*}
Q_{\omega,\alpha,\beta}^{-1}\left(\frac{d}{dt} Q_{\omega,\alpha,\beta}\right)\begin{bmatrix}
0\\ \alpha\\ 0\\
\end{bmatrix}
=\begin{bmatrix}
-\dot\omega \alpha \cos\alpha \cos\beta -\alpha\dot\alpha \sin\beta\\
0\\
\dot\alpha \alpha \cos\beta-\dot\omega \alpha \cos\alpha\sin\beta
\end{bmatrix},
\end{equation*}
\begin{equation*}
Q_{\omega,\alpha,\beta}^{-1}\left(\frac{d}{dt} Q_{\omega,\alpha,\beta}\right)\begin{bmatrix}
-\beta\\ 0\\ 1\\
\end{bmatrix}
=\begin{bmatrix}
\dot\omega \sin\alpha+\dot\beta\\
\dot\omega(\cos\alpha\sin\beta-\beta\cos\alpha\cos\beta)-\dot\alpha (\beta \sin\beta+\cos\beta)\\
\dot\omega \beta \sin \alpha+\dot\beta \beta
\end{bmatrix},
\end{equation*}
and thus
\EQ{\label{R-12}
-\pd_t\Phi^\alpha+\Delta\Phi^\alpha-\mathcal E_{-1,2}&=Q_{\omega,\alpha,\beta}\begin{bmatrix}\dot\omega\alpha\cos\alpha\cos\beta+\dot\alpha\sin\beta\left(\alpha+\frac{2\rho }{1+\rho^2}\sin\theta\right)\\
-\dot\alpha\left(1-\frac{\rho^2-1}{1+\rho^2}\cos\beta+\frac{2\rho}{1+\rho^2}\sin\beta\cos\theta\right)
\\
\dot\omega\alpha\cos\alpha\sin\beta-\dot\alpha\cos\beta\left(\alpha+\frac{2\rho}{1+\rho^2}\sin\theta\right)\end{bmatrix}\\
&:=\mathcal R_{-1,2}[\alpha,\beta]
}
and
\EQ{\label{R-11}
-\pd_t\Phi^\beta+\Delta\Phi^\beta-\mathcal E_{-1,1}&=Q_{\omega,\alpha,\beta}\begin{bmatrix}\frac{2}{1+\rho^2}\dot\beta-\dot\omega\sin\alpha-\dot\beta
\\
-\dot\omega(\cos\alpha\sin\beta-\beta\cos\alpha\cos\beta)+\dot\alpha (\beta \sin\beta+\cos\beta)
\\
-\dot\omega\beta\sin\alpha-\dot\beta\left(\beta-\frac{2\rho}{1+\rho^2}\cos\theta\right)\end{bmatrix}\\
&:=\mathcal R_{-1,1}[\alpha,\beta].
}
Consequently, we obtain
\EQN{
-\pd_t(\Phi^\alpha+\Phi^\beta)+\Delta(\Phi^\alpha+\Phi^\beta)-\mathcal E_{-1}=\mathcal R_{-1}[\alpha,\beta],
}
where
\EQ{\label{R-1}
\mathcal R_{-1}[\alpha,\beta]:=\mathcal R_{-1,1}[\alpha,\beta]+\mathcal R_{-1,2}[\alpha,\beta].
}
\medskip

\subsection{Inner--outer gluing system}

\medskip

Collecting the error estimates in the previous section, we will get a solution solving \eqref{eqsys1} if the pair $(\phi,\Psi^*)$ solves the {\em inner--outer gluing system}
\begin{equation}\label{eqn-inner}
\left\{
\begin{aligned}
&\la^2\partial_t \phi= L_W[\phi]+\la^2 Q^{-1}_{\omega,\alpha,\beta}\left[\tilde L_U[\Psi^*]+\mathcal K_0[p,\xi]+\mathcal K_1[p,\xi]+\Pi_{U^{\perp}}[\RR_{-1}[\alpha,\beta]]\right],~\mbox{ in }~\mathcal D_{2R}\\
&\phi(\cdot,0)=0,~\mbox{ in }~B_{2R(0)}\\
&\phi\cdot W=0,~\mbox{ in }~\mathcal D_{2R}\\
\end{aligned}
\right.
\end{equation}

\begin{equation}\label{eqn-outer}
\partial_t \Psi^*=\Delta_x\Psi^*+\mathcal G[p,\xi,\Psi^*,\alpha,\beta,\phi]~\mbox{ in }~\Omega\times(0,T),
\end{equation}
where
\begin{equation*}
\begin{aligned}
\mathcal G[p,\xi,\Psi^*,\alpha,\beta,\phi] :=&~(1-\eta_R) \tilde L_U[\Psi^*]+(\Psi^*\cdot U) U_t+Q_{\omega,\alpha,\beta}(\phi\Delta_x \eta_R+2\nabla_x \eta_R\cdot \nabla_x \phi-\phi\partial_t \eta_R)\\
&~+\eta_R Q_{\omega,\alpha,\beta}(- (Q_{\omega,\alpha,\beta}^{-1}\frac{d}{dt}Q_{\omega, \alpha,\beta}) \phi+\la^{-1}\dot\la y\cdot \nabla_y \phi +\la^{-1}\dot\xi\cdot \nabla_y \phi)\\
&~+(1-\eta_R)\left(\mathcal K_0[p,\xi]+\mathcal K_1[p,\xi]+\Pi_{U^{\perp}}[\RR_{-1}[\alpha,\beta]]\right)-\Pi_{U^{\perp}}[\tilde{\mathcal R}_1]\\
&~+N_U[\eta_R Q_{\omega,\alpha,\beta} \phi+\Pi_{U^{\perp}}(\Phi^0+\Phi^{\alpha}+\Phi^{\beta}+\Psi^*)]\\
&~+\left((\Phi^0+\Phi^{\alpha}+\Phi^{\beta})\cdot U\right) U_t,
\end{aligned}
\end{equation*}
 the linearization $L_W[\phi]$ is defined in \eqref{def-linearization}, and
$$\mathcal D_{2R}:=\left\{(y,t):y\in B_{2R(t)}, t\in(0,T)\right\}$$
with the radius
\begin{equation}\label{choice-R}
R=R(t)=\la_*(t)^{-\gamma_*},~\mbox{ with }~\la_*(t)=\frac{|\log T|(T-t)}{|\log(T-t)|^2}~\mbox{ and }~\gamma_*\in(0,1/2).
\end{equation}
The reason for choosing such $R(t)$ and $\la_*(t)$ will be made clear later on. If the pair $(\phi,\Psi^*)$ solves the inner--outer gluing system \eqref{eqn-inner}--\eqref{eqn-outer}, then we get a desired solution
\begin{equation*}
u(x,t)=U+ \Pi_{U^{\perp}}[\eta_R Q_{\omega,\alpha,\beta}\phi+\Psi^*+\Phi^0+\Phi^{\alpha}+\Phi^{\beta}]+a(\Pi_{U^{\perp}}[\eta_R Q_{\omega,\alpha,\beta}\phi+\Psi^*+\Phi^0+\Phi^{\alpha}+\Phi^{\beta}]) U
\end{equation*}
which solves problem \eqref{HMF}. We take the boundary condition $u\big|_{\partial\Omega}={\bf e_3}:=\begin{bmatrix}0\\ 0\\ 1\\\end{bmatrix}$, which amounts to
\begin{equation*}
\Pi_{U^{\perp}}[\Psi^*+\Phi^0+\Phi^{\alpha}+\Phi^{\beta}]+a(\Pi_{U^{\perp}}[U+\Psi^*+\Phi^0+\Phi^{\alpha}+\Phi^{\beta}]) U={\bf e_3} -U~\mbox{ on }~\pd \Omega\times(0,T).
\end{equation*}
So it suffices to take the boundary condition for the outer problem \eqref{eqn-outer} as
\begin{equation*}
\Psi^*\big|_{\pd\Omega} ={\bf e_3}-U-\Phi^0-\Phi^{\alpha}-\Phi^{\beta}.
\end{equation*}

\medskip

\subsection{Reduced equations for parameter functions}\label{sec-redu}

\medskip

In this section, we will derive the parameter functions $\la(t)$, $\xi(t)$, $\omega(t)$, $\alpha(t)$ and $\beta(t)$ at leading order as  $t\to T$.

The inner problem  \eqref{eqn-inner} has the form
\begin{equation}\label{eqn-redu}
\begin{cases}
\la^2 \phi_t   =  L_ W  [\phi ] +   h[p,\xi,\alpha,\beta, \Psi^*] (y,t)  &\inn \mathcal D_{2R},
\\
\phi\cdot  W   =  0   &\inn \mathcal D_{2R},
\\
\phi(\cdot, 0)  = 0 &\inn B_{2R(0)} .
\end{cases}
\end{equation}
Here we recall that we write $p(t)=\la(t) e^{i\omega(t)}$. For convenience, we assume that $h(y,t)$ is defined for all $y\in \R^2$ extending outside $\mathcal D_{2R}$ as
\begin{equation*}
\label{HH2}
h[p,\xi,\alpha,\beta, \Psi^*] = \la^2  Q^{-1}_{\omega,\alpha,\beta}\chi_{\mathcal D_{2R} }\left[\tilde L_U  [\Psi^* ]
+   \KK_{0}[p,\xi]
+    \KK_{1}[p,\xi]  +\Pi_{U^{\perp}}[\RR_{-1}[\alpha,\beta]]\right]    ,
\end{equation*}
where $\chi_A$ denotes the characteristic function of a set $A$,
$\KK_0$ is defined in \eqref{K01}, \eqref{K02}, $\KK_1$ in \eqref{K1} and $\mathcal R_{-1}$ in \eqref{R-1}. If $\lambda(t)$ has a relatively smooth vanishing as $t\to T$, it is then natural that the term $\la^2 \phi_t $ is of smaller order and  the equation \eqref{eqn-redu} is
approximated by the elliptic problem
\begin{align}
\label{linearized-elliptic}
L_ W  [\phi ] +   h[p,\xi,\alpha,\beta, \Psi^*]=0, \quad \phi\cdot W  =0  \inn B_{2R} .
\end{align}
We consider the kernel functions $Z_{l,j}(y)$ defined in \eqref{kernels}, which satisfy $L_ W [Z_{l,j}]=0$ for $l=0,\pm 1$, $j=1,2$. If there is a solution $\phi(y,t)$ to \eqref{linearized-elliptic} with sufficient decay, then necessarily
\begin{equation}\label{ww1}
\int_{B_{2R} }   h[p,\xi,\alpha,\beta, \Psi^*](y,t)\cdot Z_{l,j} (y)\, dy = 0  \quad \mbox{ for all } t\in (0,T) ,
\end{equation}
for $l=0,\pm 1$, $j=1,2$.
These orthogonality conditions \eqref{ww1} amount to an integro-differential system of equations for $p(t)$, $\xi(t)$, $\alpha(t)$, $\beta(t)$, which, as a matter of fact, {\em determine} the correct values of the parameter functions so that the solution pair $(\phi,\Psi^*)$ with appropriate asymptotics exists.

\medskip

For the reduced equations of $p(t)$ and $\xi(t)$ which correspond to mode $l=0$ and mode $l=1$, respectively, we invoke some useful expressions and results in \cite[Section 5]{17HMF}. Let
\begin{align*}
\label{defB0j}
\mathcal B_{0j} [p] (t) :=  \frac{\la}{2\pi} \int_{\R^2}   Q^{-1}_{\omega,\alpha,\beta} [ \KK_{0}[p,\xi]+ \KK_{1}[p,\xi]+\Pi_{U^{\perp}}[\mathcal R_{-1}[\alpha,\beta]]] \cdot Z_{0,j} (y)\, dy,~j=1,2.
\end{align*}
From \eqref{R-1}, \eqref{R-11} and \eqref{R-12}, direct computations yield
\EQ{\label{366366366}
&\quad\int_{B_{2R}}Q_{\omega,\alpha,\beta}^{-1}\Pi_{U^{\perp}}[\mathcal R_{-1}[\alpha,\beta]]\cdot Z_{0,1}(y)\,dy\\
&=\pi\left(-\frac{16R^2}{4R^2+1}+4\log(4R^2+1)\right)(\dot\omega\alpha\cos\alpha\sin\beta-\alpha\dot\alpha\cos\beta-\dot\omega\beta\sin\alpha-\beta\dot\beta),
}
and
\EQ{\label{367367367}
\int_{B_{2R}}Q_{\omega,\alpha,\beta}^{-1}\Pi_{U^{\perp}}[\mathcal R_{-1}[\alpha,\beta]]\cdot Z_{0,2}(y)\,dy=\pi\left(-\frac{16R^2}{4R^2+1}+4\log(4R^2+1)\right) \dot\alpha \sin\beta.
}
Combining \eqref{K01}, \eqref{K02} with \eqref{366366366} and \eqref{367367367},
 the following expressions for $\mathcal B_{01}$, $\mathcal B_{02}$ are readily obtained
\begin{align*}
\mathcal B_{01} [p](t)
&=
  \int_{-T} ^t  {\rm Re  } \,(\dot p(s) e^{-i\omega(t)} )\,
\Gamma_1 \left ( \frac {\la(t)^2}{t-s}   \right )  \,\frac{ ds}{t-s}\,  -2 \dot\la (t) +o(1)
\\
 \mathcal B_{02}[p](t)
& =
 \int_{-T} ^t  {\rm Im  } \,(\dot p(s) e^{-i\omega(t)} )\,
\Gamma_2 \left ( \frac {\la(t)^2}{t-s}   \right )  \,\frac{ ds}{t-s}\, ,
\end{align*}
where $o(1)\to0$ as $t\to T$, and $\Gamma_j(\tau)$  are smooth functions
defined as
\begin{align*}
\Gamma_1 (\tau)
&
=  - \int_0^{\infty} \rho^3 w^3_\rho \left [  K ( \zeta )
+ 2 \zeta K_\zeta (\zeta ) \frac {\rho^2} { 1+ \rho^2}
-4\cos(w) \zeta^2 K_{\zeta\zeta} (\zeta)
\right ]_{\zeta = \tau(1+\rho^2)}   \, d\rho,
\\
\Gamma_2 (\tau) & =
- \int_0^{\infty} \rho^3 w^3_\rho  \left [K(\zeta)   - \zeta^2 K_{\zeta\zeta}(\zeta) \right ]_{\zeta = \tau(1+\rho^2)}
\, d\rho\, ,
\end{align*}
where
\[
 K(\zeta)  =  2\frac {1- e^{-\frac{\zeta}4}} {\zeta}.
\]
Using the expressions of $\Gamma_j (\tau)$, we get
\begin{equation}
\left\{
\begin{aligned}
\label{GammaNear0}
| \Gamma_j (\tau)- 1|
& \le   C \tau(1+  |\log\tau|)~~&\hbox{ for }\tau<1 ,
\\
\nonumber
|\Gamma_j (\tau)|
& \le  \frac C\tau~~&\hbox{ for }\tau> 1.
\end{aligned}
\right.
\end{equation}
Define
\begin{align}
\label{defB0-new}
\mathcal B_0[p ] :=
\frac{1}{2}e^{i \omega(t) }
\left(
\mathcal B_{01}[p ]
+  i\mathcal B_{02}[p ] \right)
\end{align}
and
\begin{align*}
\nonumber
a_{0j}[p,\xi,\alpha,\beta, \Psi^*]
&:=
- \frac{ \la}{2\pi} \int_{B_{2R}}   Q^{-1}_{\omega,\alpha,\beta} \tilde L_U  [\Psi^* ]  \cdot Z_{0,j} (y)\, dy, ~~ j=1,2,
\\
a_{0}[p,\xi, \alpha,\beta,\Psi^*]
& :=
\frac{1}{2}
e^{i \omega(t)} \left( a_{01}[p,\xi,\alpha,\beta, \Psi^*] + i a_{02}[p,\xi,\alpha,\beta, \Psi^*] \right).
\end{align*}
Similarly, we let
\begin{align*}
\mathcal B_{1j} [\xi ] (t)  & :=  \frac{\la}{2\pi} \int_{\R^2}   Q^{-1}_{\omega,\alpha,\beta} [ \KK_{0}[p,\xi]+ \KK_{1}[p,\xi]+\RR_{-1}[\alpha,\beta]] \cdot Z_{1,j} (y)\, dy, ~~ j=1,2,\\
\mathcal B_{1} [\xi ] (t) & :=   \mathcal B_{11}[\xi](t) + i \mathcal B_{12}[\xi](t) .
\end{align*}
Directly using the expressions \eqref{R-1}, \eqref{R-11} and \eqref{R-12}, we have
\EQN{
\int_{B_{2R}}Q_{\omega,\alpha,\beta}^{-1}\Pi_{U^{\perp}}[\mathcal R_{-1}[\alpha,\beta]]\cdot Z_{1,1}(y)\,dy
=&~\frac{8\pi R^2}{4R^2+1}(\dot\omega\alpha\cos\alpha\cos\beta+\dot\alpha\alpha\sin\beta-\dot\omega\sin\alpha+\dot\beta),
}
and
\EQN{
\int_{B_{2R}}Q_{\omega,\alpha,\beta}^{-1}\Pi_{U^{\perp}}[\mathcal R_{-1}[\alpha,\beta]]\cdot Z_{1,2}(y)\,dy=-\frac{8\pi R^2}{4R^2+1}(\dot\alpha-\dot\omega\beta\cos\alpha\cos\beta-\dot\alpha\beta\sin\beta+\dot\omega\cos\alpha\sin\beta).\\
}
Therefore, by \eqref{K1}, \eqref{kernels} and the fact that $\int_0^{\infty} \rho w_\rho^2 d\rho\, =2$, we obtain
\[
\mathcal B_{1} [\xi ](t)\,  =  \, 2[\, \dot \xi_1(t) + i\dot \xi_2(t)+o(1)\,] ~ \mbox{ as }~ t\to T .
\]
At last, we let
\begin{align*}
a_{1j} [p,\xi,\alpha,\beta, \Psi^* ] &:= \frac{\lambda}{2\pi}
\int_{B_{2R}} Q^{-1}_{\omega,\alpha,\beta} \tilde L_U[\Psi^*]\cdot Z_{1,j}(y)  \,dy, ~~ j=1,2,
\\
a_1[p,\xi,\alpha,\beta, \Psi^* ] & := - e^{i \omega(t) } ( a_{11}[p,\xi,\alpha,\beta, \Psi^* ] + i a_{12} [p,\xi,\alpha,\beta, \Psi^* ] ) .
\end{align*}
We thus obtain that
the four conditions \eqref{ww1} for $l=0,1$ are reduced to the system of two complex equations
\begin{align}
\mathcal B_0[p ] & = a_0[p,\xi,\alpha,\beta,\Psi^* ],\label{eqB0}\\
\mathcal B_1[\xi ] & =    a_1[p,\xi,\alpha,\beta,\Psi^* ].\label{eqB1}
\end{align}
We observe that
\begin{align*}
\mathcal B_0[p ]
=   \int_{-T} ^{t-\la^2}    \frac{\dot p(s)}{t-s}ds\, + O\big( \|\dot p\|_\infty \big)+o(1) ~ \mbox{ as }~ t\to T.
\end{align*}
To get an approximation for $a_0$, we need to analyze the operator $\tilde L_U$ in $a_0$. To this end, we write
$$
\Psi^* =  \left [ \begin{matrix}\psi^* \\  \psi^*_3  \end{matrix}   \right ] , \quad \psi^* = \psi^*_1 + i \psi^*_2 .
$$
From \eqref{Ltilde211} and \eqref{Ltilde222}, we have
$$\tilde L_U[\Psi^*](y,t)=[\tilde L_U]_0[\Psi^*]+[\tilde L_U]_1[\Psi^*]+[\tilde L_U]_2[\Psi^*],$$
where
\begin{equation*}
\left\{
\begin{aligned}
~ [\tilde L_U]_0[\Psi^*]=&~ \la^{-1} Q_{\omega,\alpha,\beta}\rho w^2_{\rho} [{\rm div}(e^{-i\omega}\psi^*)E_1+{\rm curl}(e^{-i\omega}\psi^*)E_2],\\
[\tilde L_U]_1[\Psi^*]=&~ -2\la^{-1} Q_{\omega,\alpha,\beta} w_{\rho}\cos w [(\partial_{x_1}\psi^*_3)\cos\theta+(\partial_{x_2}\psi^*_3)\sin\theta]E_1\\
&~-2\la^{-1} Q_{\omega,\alpha,\beta} w_{\rho}\cos w [(\partial_{x_1}\psi^*_3)\sin\theta-(\partial_{x_2}\psi^*_3)\cos\theta]E_2,\\
[\tilde L_U]_2[\Psi^*]=&~ \la^{-1} Q_{\omega,\alpha,\beta} \rho w^2_{\rho}  [{\rm div}(e^{i\omega}\bar\psi^*)\cos 2\theta-{\rm curl}(e^{i\omega}\bar\psi^*)\sin 2\theta]E_1\\
&~+\la^{-1} Q_{\omega,\alpha,\beta} \rho w^2_{\rho}  [{\rm div}(e^{i\omega}\bar\psi^*)\sin 2\theta+{\rm curl}(e^{i\omega}\bar\psi^*)\cos 2\theta]E_2,\\
\end{aligned}
\right.
\end{equation*}
and the differential operators in $\Psi^*$  on the right hand sides
are evaluated at $(x,t)$ with   $x= \xi(t)+ \la(t) y$,  $y = \rho e^{i\theta}$ while $E_j= E_j(y)$ for $j=1,2$. From the above decomposition, assuming that $\Psi^*$ is of class $C^1$ in the space variable, we then get
\[
a_{0}[p,\xi,\alpha,\beta, \Psi^*] =   [   \div \psi^*+  i\curl \psi^*](\xi,t )  + o(1) ~ \mbox{ as }~ t\to T.
\]

\medskip

Similarly, since  $\int_0^\infty w_\rho^2 \cos w \rho \, d\rho = 0 $, we get
\begin{align*}
a_1[p,\xi,\alpha,\beta, \Psi^*] & =   2  ( \pd_{x_1} \psi^*_3 + i\pd_{x_2} \psi^*_3) (\xi, t) \int_{0}^\infty \cos w \,w_\rho^2 \rho \, d\rho  + o(1)
\\
&  =  o(1) ~\mbox{ as } ~t\to T.
\end{align*}

 \medskip

We now simplify the system \eqref{eqB0}--\eqref{eqB1} in the form
\begin{align}
\nonumber
\int_{-T} ^{t-\la^2}     \frac{\dot p(s)}{t-s}ds
& =
[ \div \psi^*+  i\curl \psi^*](\xi(t),t )  + o(1) + O(\|\dot p\|_\infty)  \\
\dot \xi(t) &  =  o(1)~ \mbox{ as }~ t\to T. \label{equB1}
 \end{align}
For the moment, we assume  that the function $\Psi^*(x,t)$ is fixed and sufficiently regular, and we regard $T$ as a parameter that will always be taken smaller if necessary. We recall that we need $\xi(T)=q$ where $q\in \Omega$ is given, and $\la(T)=0$.  Equation  \eqref{equB1} immediately suggests us
to take  $\xi(t) \equiv q$ as the first approximation.
Neglecting lower order terms, $p(t)= \lambda (t) e^{i\omega(t)}$ satisfies the following integro-differential system
\begin{align}
\label{kuj}
\int_{-T} ^{t-\la^2(t)} \frac{ \dot p(s)}{t-s}ds   =
\div \psi^*(q,0 ) + i\curl \psi^*(q,0 ) =: a_0^*.
\end{align}
At this point, we make the following assumption
\begin{align}
\label{negativeDiv}
\div \psi^*(q,0 ) <0,
\end{align}
which implies that
$a_0^*  =  -|a_0^*| e^{i\omega_0}$ for a unique $\omega_0\in (-\frac \pi2 , \frac \pi 2)$. Let us take
\smallskip
$$\omega(t)\equiv \omega_0.$$
\smallskip
Equation \eqref{kuj} then becomes
\begin{equation}\label{cccc4}
 \int_{-T} ^{t-\la^2(t)} \frac{ \dot \la(s)}{t-s}ds   =
 - |a_0^*| .
\end{equation}
 We claim that a good approximate solution to \eqref{cccc4} as $t\to T$  is given by
\[
\dot \la(t) =  -\frac {\kappa} {\log^2(T-t)}
\]
for a suitable $\kappa>0$. Indeed, we have
\begin{align*}
\int_{-T}^{t-\la^2(t)} \frac {\dot\la(s)}{t-s}\, ds \,  =& \
\int_{-T}^{t-  (T-t)  } \frac{ \dot\la  (s)}{t-s} \, ds +     \, \dot \la (t)\left [ \log (T-t)   - 2\log (\la(t)) \right ]\nonumber \\  &~+   \int_{t-(T-t)   } ^{ t- \la^2(t)}\frac{\dot \la(s)-\dot \la(t)}{t-s} ds    \nonumber \\
  \approx & \
\int_{-T}^{t } \frac{ \dot\la  (s)}{T-s}\, ds\,  - \, \dot \la (t) \log (T-t) \, := \Upsilon(t)
\end{align*}
as $t\to T$. We see that
$$
\log(T-t) \frac {d\Upsilon(t)} {dt}  =
   \frac d{dt} (\log^2(T-t) \, \dot\la(t))= 0
$$
from the explicit form of  $\dot\la(t)$. Thus $\Upsilon(t)$ is a  constant. As a consequence, equation \eqref{cccc4}
is approximately satisfied if $\kappa$ is such that
$$
\kappa \int_{-T}^{T} \frac{ \dot\la  (s)}{T-s}\,ds\ =\ -|a_0^*| ,
$$
which finally gives us the approximate expression
$$
\dot\la (t)= -  |\div \psi^*(q,0) + i \curl\psi^* (q,0) |\, \dot \la_* (t) ,
$$
where
\[
 \dot \la_* (t) = -\frac { |\log T|}{\log^2(T-t)}.
\]
Naturally, imposing $\la_*(T) =0$, we then have
\begin{equation}\label{def-lalala}
 \la_* (t) =  \frac { |\log T|}{\log^2(T-t)}(T-t)\, (1+ o(1)) ~\mbox{ as } ~t\to T.
\end{equation}

\medskip

Next, we consider \eqref{ww1} for the case of mode $l=-1$, which gives the reduced equations of $\alpha(t)$ and $\beta(t)$. By \eqref{R-1}, \eqref{R-11} and \eqref{R-12}, we evaluate
\EQN{
&\quad\int_{B_{2R}} Q_{\omega,\alpha,\beta}^{-1}\Pi_{U^{\perp}}[\mathcal R_{-1}[\alpha,\beta]]\cdot Z_{-1,1}(y)\,dy\\
&=4\pi\left(-\frac{4R^2(2R^2+1)}{4R^2+1}+\log(4R^2+1)\right)(-\dot\beta-\dot\omega\sin\alpha+\dot\omega\alpha \cos\alpha\cos\beta+\dot\alpha \alpha \sin\beta)\\
&=8\pi\left[(R^2-\log R)\dot\beta(1+o(1))\right],
}
and
\EQN{
&\quad\int_{B_{2R}} Q_{\omega,\alpha,\beta}^{-1}\Pi_{U^{\perp}}[\mathcal R_{-1}[\alpha,\beta]]\cdot Z_{-1,2}(y)\,dy\\
&=4\pi\left(\frac{4R^2(2R^2+1)}{4R^2+1}-\log(4R^2+1)\right)(\dot\alpha(1-\beta\sin\beta-2\cos\beta)+\dot\omega\cos\alpha(\sin\beta-\beta\cos\beta))\\
&=8\pi\left[(-R^2+\log R)\dot\alpha(1+o(1))\right],
}
where we recall that $\omega(t)\equiv \omega_0$.
Since
\EQN{
\int_{B_{2R}}\la^2 Q_{\omega,\alpha,\beta}^{-1}\left[\tilde{L}_U[\Psi^*]+\mathcal K_0+\mathcal K_1\right]\cdot Z_{-1,j}(y)\,dy=c_j\la
}
for some $c_j\in\R$, for $j=1,2$, the orthogonality condition \eqref{ww1} with $l=-1$ gives
\EQN{
8\pi\la^2(-R^2+\log R)\,\dot\beta(1+o(1))&=c_1\la,\\
8\pi\la^2(R^2-\log R)\,\dot\alpha(1+o(1))&=c_2\la.
}
Thus, by \eqref{def-lalala} and the definition of $R=R(t)$ in \eqref{choice-R}, good choices for $\alpha(t)$ and $\beta(t)$ at leading orders are
$$\alpha(t)=c_{\alpha} (T-t)^{\de_1}(1+o(1)),~~ \beta(t)=c_{\beta} (T-t)^{\de_2}(1+o(1)), ~~\mbox{ as }~~ t\to T$$
for some $\de_1,\,\de_2>0$ and $c_{\alpha},~c_{\beta}\in\R$.

\medskip

\subsection{Linear theory for the inner problem}\label{sec-ltinner}

\medskip

To capture the heart of the singularity formation, a linear theory of the inner problem \eqref{eqn-inner} is required. We consider
\begin{equation}\label{eqn-linearinner}
\begin{cases}
\la^2 \partial_t \phi =L_W[\phi]+h(y,t),~&\mbox{ in }~\mathcal D_{2R},\\
\phi(\cdot,0)=0,~&\mbox{ in }~B_{2R(0)},\\
\phi\cdot W=0,~&\mbox{ in }~\mathcal D_{2R},\\
\end{cases}
\end{equation}
where we recall from \eqref{choice-R} that
$$R=R(t)=\la_*(t)^{-\gamma_*},~\mbox{ with }~\la_*(t)=\frac{|\log T|(T-t)}{|\log(T-t)|^2}~\mbox{ and }~\gamma_*\in(0,1/2).$$

We regard $h(y,t)$ as a function defined in $\R^2\times(0,T)$ with compact support, and assume that $h(y,t)$ has the space-time decay of the following type
$$|h(y,t)|\lesssim \frac{\la_*^{\nu}(t)}{1+|y|^a},\quad h\cdot W=0,$$
where $\nu>0$ and $a\in(2,3)$. Define the norm
\begin{equation*}
\|h\|_{\nu,a}:=\sup_{(y,t)\in\R^2\times(0,T)} \la_*^{-\nu}(t)(1+|y|^a)|h(y,t)|.
\end{equation*}
In the polar coordinates, $h(y,t)$ can be written as
\begin{equation*}
h(y,t)=h^1(\rho,\theta,t) E_1(y) +h^2(\rho,\theta,t) E_2(y),~y=\rho e^{i\theta}
\end{equation*}
since $h\cdot W=0$. Expanding in the Fourier series, we write
\begin{equation}\label{def-Fourier1}
\tilde h(\rho,\theta,t):=h^1+i h^2=\sum_{k=-\infty}^{\infty} \tilde h_k(\rho,t) e^{ik\theta},~\quad\tilde h_k=\tilde h_{k1}+i \tilde h_{k2}
\end{equation}
such that
\begin{equation}\label{def-Fourier2}
h(y,t)=\sum_{k=-\infty}^{\infty} h_k(y,t) :=h_0(y,t)+h_1(y,t)+h_{-1}(y,t)+h_{\perp}(y,t)
\end{equation}
with
\begin{equation}\label{def-Fourier3}
h_k(y,t)={\rm Re}(\tilde h_k(\rho,t)e^{ik\theta})E_1+{\rm Im}(\tilde h_k(\rho,t)e^{i k\theta}) E_2, ~~ k\in\mathbb Z.
\end{equation}
We consider the kernel functions $Z_{k,j}$ defined in \eqref{kernels}, and define
\begin{equation}\label{def-hbar}
\bar h_{k}(y,t)   :=       \sum_{j=1}^2   \frac {\chi Z_{k,j}(y)} { \int_{\R^2} \chi  |Z_{k,j} |^2  }  \, \int_{ \R^2 }h(z  , t )  \cdot Z_{k,j}(z)\, dz, \quad~k=0,\pm 1,~j=1,2,
\end{equation}
where
$$
\chi(y,t) = \begin{cases}
w_\rho^2(|y|)  & \hbox{ if }  |y|< 2R(t),\\
0& \hbox{ if }  |y|\ge 2R(t).
\end{cases}
$$
The main result of this section  is stated as follows.

\begin{prop}
\label{prop-lt}
Assume that $a\in(2,3)$, $\nu>0$, $\delta\in(0,1)$ and $\|h\|_{\nu,a} <+\infty$.
Let us write $$h = h_0 + h_1 + h_{-1} + h_\perp ~\mbox{ with }~ h_{\perp} = \sum_{k\not=0,\pm 1} h_k.$$
Then there exists a solution $\phi[h]$ of problem $\eqref{eqn-linearinner}$, which defines
a linear operator of $h$, and satisfies the following estimate in $\mathcal{D}_{2R}$
\begin{align*}
&\quad
|\phi(y,t)|+(1+|y|)\left |\nabla_y \phi(y,t)\right |  +  (1+|y|)^2\left |\nabla^2_y \phi(y,t)\right |
\\
& \lesssim
   \lambda^\nu_*(t)   \,
 \min\left\{\frac{R^{\delta(5-a)}(t)}{1+|y|^3},  \frac{1}{1+|y|^{a-2}} \right\}
 \, \| h_0 -\bar h_{0} \|_{\nu,a}
+
\frac{ \lambda^\nu_*(t)  R^2(t)} {1+ |y|}  \|\bar h_0\|_{\nu,a}
\\
& \quad
+   \frac{ \lambda^\nu_*(t) }{ 1+ |y|^{a-2} }\, \left \| h_1 - \bar h_1\right  \|_{\nu,a}
+   \frac{ \lambda^\nu_*(t)  R^4(t)} {1+ |y|^2} \left \| \bar h_{1} \right  \|_{\nu,a}
\\
& \quad
+
  \lambda^\nu_*(t)
 \, \| h_{-1} -\bar h_{-1} \|_{\nu,a}
 + \lambda^\nu_*(t) \log R(t) \,   \| \bar h_{-1} \|_{\nu,a}
\\
& \quad
+
\frac{ \lambda^\nu_*(t)  }{ 1+ |y|^{a-2} }\,   \|h_\perp \|_{\nu,a} .
\end{align*}
\end{prop}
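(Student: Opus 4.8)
The plan is to solve \eqref{eqn-linearinner} one angular mode at a time and then superpose, in the spirit of \cite[Section~7]{17HMF}, but with a genuinely new ingredient at mode $-1$. Writing $y=\rho e^{i\theta}$ and using the Fourier expansion \eqref{def-Fourier1}--\eqref{def-Fourier3}, the equation for the $k$-th mode $\tilde\phi_k(\rho,t)$ decouples into a $2\times 2$ parabolic system in the radial variable $\rho$, with $L_W$ reducing to a one-dimensional operator $\mathcal L_k$ whose potential is built from $w_\rho^2$ and $\cos w$. For $|k|\ge 2$ the operator $\mathcal L_k$ has no bounded kernel and is coercive uniformly in $k$, so a weighted energy estimate combined with the barrier $\la_*^\nu(t)(1+|y|)^{2-a}$ yields the last line of the asserted bound for $h_\perp$ after summing a geometric series in $k$.

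The modes $k=0,\pm 1$ carry the (approximate) kernel of $L_W$: the pairs $Z_{0,j}$ (dilation and rotation about the $z$-axis), $Z_{1,j}$ (translations), and $Z_{-1,j}$ (rotations about the $x,y$ axes). For each of these $k$ I would first split $h_k=(h_k-\bar h_k)+\bar h_k$ with $\bar h_k$ as in \eqref{def-hbar}, so that $h_k-\bar h_k$ is (essentially) $L^2_\chi$-orthogonal to the kernel. For the orthogonal part I would construct a solution that decays as fast as the data permits, via the Duhamel representation in the rescaled time $\tau$ with $d\tau/dt=\la^{-2}$ together with the a priori estimates described below; near $|y|\sim R(t)$ the competition between the natural decay $(1+|y|)^{2-a}$ of the forcing and the truncation of $h$ at $|y|\sim 2R(t)$ produces the two rival rates $R^{\delta(5-a)}(t)/(1+|y|^3)$ and $(1+|y|^{a-2})^{-1}$ at mode $0$, and the $(1+|y|^{a-2})^{-1}$ rate at modes $\pm 1$. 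For the projected part $\bar h_k$, which is a scalar multiple of $\chi Z_{k,j}$, I would write an explicit particular solution by variation of parameters using $Z_{k,j}$ together with the second, growing, homogeneous solutions of $\mathcal L_k$; the rate at which these companion solutions grow, truncated at the scale $R(t)$, is exactly what produces the amplification factors $R^2(t)/(1+|y|)$ at mode $0$, $R^4(t)/(1+|y|^2)$ at mode $1$, and $\log R(t)$ at mode $-1$.

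The analytic heart is a uniform a priori estimate for $\la^2\partial_t-L_W$ on the moving cylinder $\mathcal{D}_{2R}$, acting on modes orthogonal to the kernel, in the weighted norms above. I would prove it by contradiction: if it fails, rescale to obtain sequences with $\|h_n\|_{\nu,a}\to 0$ but $\phi_n$ of unit weighted size; use interior parabolic regularity to extract a limit $\phi_\infty$ solving $\partial_\tau\phi_\infty=L_W[\phi_\infty]$ (on all of $\R^2$, or on a half-line in $\tau$, depending on where the norm is attained) with $\phi_\infty\cdot W=0$, orthogonal to the $Z_{k,j}$ and with the prescribed decay; a nondegeneracy/Liouville argument for $L_W$ then forces $\phi_\infty\equiv 0$, a contradiction. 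One must rule out concentration at $t=0$ (handled by the zero initial datum) and at the lateral boundary $|y|=2R(t)$ (handled because $R(t)\to\infty$ and the estimates are interior). Finally one reassembles $\phi=\sum_k\phi_k$, checks that the mode-wise bounds add up to the stated estimate, and notes that linearity of $h\mapsto\phi[h]$ is inherited from the mode-wise construction.

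The step I expect to be the main obstacle is the mode $-1$ analysis, which has no counterpart in \cite{17HMF}: the kernels $Z_{-1,j}=\rho^2 w_\rho[\cos\theta\,E_1\mp\sin\theta\,E_2]$ are \emph{bounded but do not decay}, since $\rho^2 w_\rho\to -2$ as $\rho\to\infty$, so the linearized operator $\mathcal L_{-1}$ sits exactly at the borderline of nondegeneracy. Consequently the orthogonal part produces no decay gain at all (only the bare factor $\la_*^\nu$) and the projected part only the slowly growing factor $\log R$; obtaining these sharp weights, and making them compatible with the reduced equations for the new parameters $\alpha(t),\beta(t)$ derived in Section~\ref{sec-redu}, is the delicate point and the reason this improved linear theory is needed.
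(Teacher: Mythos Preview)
Your overall architecture---Fourier decomposition in $\theta$, splitting each low mode $k=0,\pm1$ into the projection $\bar h_k$ and the orthogonal remainder $h_k-\bar h_k$, handling $|k|\ge 2$ by coercivity, and then upgrading to gradients by scaling and parabolic regularity---is exactly what the paper does, and your identification of the amplification factors $R^2/(1+|y|)$, $R^4/(1+|y|^2)$, $\log R$ for the projected parts is correct. You are also right that mode $-1$ is the new content.

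The gap is in your proposed mechanism for the mode $-1$ \emph{orthogonal} piece. You suggest proving $|\phi_{-1}|\lesssim \lambda_*^\nu$ by a blow-up/Liouville contradiction. This is precisely where that method breaks down: the kernel $Z_{-1}(\rho)=2\rho^2/(1+\rho^2)$ is bounded but non-decaying, so any bounded ancient solution of $\partial_\tau\phi=\mathcal L_{-1}[\phi]$ can be a nonzero multiple of $Z_{-1}$, and the orthogonality hypothesis $\int_{B_{2R}} h_{-1}Z_{-1,j}\,dy=0$ cannot be carried to the limit because $\int_0^\infty Z_{-1}^2\rho\,d\rho=\infty$. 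With no spatial weight in the target norm there is nothing left to kill the kernel direction. Indeed, the compactness route is essentially what lies behind the weaker bound $\lambda_*^\nu\log R$ of Lemma~\ref{lm-mode-1} (from \cite{17HMF}), which the paper explicitly declares insufficient for the coupled system.

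The paper's replacement (Lemma~\ref{lem-mode-1}) is not a compactness argument at all. One factors $\varphi_{-1}=Z_{-1}f_{-1}$, so that $f_{-1}$ satisfies the divergence-form equation
\[
\partial_\tau f_{-1}=\frac{1}{Z_{-1}^2}\,\mathrm{div}\bigl(Z_{-1}^2\nabla f_{-1}\bigr)+\frac{\tilde h_{-1}}{Z_{-1}}.
\]
The orthogonality on $h_{-1}$ is used once, to solve the elliptic problem $\mathrm{div}(Z_{-1}^2\nabla f_0)=\tilde h_{-1}Z_{-1}$ with $|\nabla f_0|\lesssim \tau^{-\nu'}(1+\rho)^{1-a}$. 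The crux is then a pointwise estimate on the fundamental solution $S$ of the homogeneous equation: one checks $\partial_{\rho\rho}(\log Z_{-1}^2)<0$, so by the maximum principle $S_\rho<0$ for all time, and integrating the equation for $S_\rho$ in $\tau$ yields the explicit bound $\int_0^\infty|S_\rho(s,\rho)|\,ds\lesssim (1+\rho^4)/\rho^5$. Feeding $\nabla f_0$ into Duhamel against $S_\rho$ then gives $|f_{-1}(0,t)|\lesssim\lambda_*^\nu$, and parabolic regularity spreads this to all $\rho$. This sign/maximum-principle device for the conjugated equation is the missing idea in your sketch.
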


The construction of the solution $\phi$ to problem \eqref{eqn-linearinner} will be carried out in each Fourier mode. Write
\begin{equation*}
\phi=\sum_{k=-\infty}^{\infty} \phi_k,~~\quad \phi_k(y,t)={\rm Re} (\varphi_k(\rho,t) e^{ik\theta}) E_1 + {\rm Im} (\varphi_k(\rho,t) e^{ik\theta}) E_2.
\end{equation*}
In each mode $k$, the pair $(\phi_k,h_k)$ satisfies
\begin{equation}\label{eqn-mode}
\begin{cases}
\la^2\partial_t \phi_k = L_W[\phi_k]+h_k(y,t),~&\mbox{ in }~\mathcal D_{4R},\\
\phi_k(y,0)=0,~&\mbox{ in }~B_{4R(0)},\\
\end{cases}
\end{equation}
which is equivalent to the following problem
\begin{equation*}
\begin{cases}
\la^2\partial_t \varphi_k = \mathcal L_k [\varphi_k]+\tilde h_k(\rho,t),~&\mbox{ in }~\tilde{\mathcal D}_{4R},\\
\varphi_k(\rho,0)=0,~&\mbox{ in }~(0,4R(0)),\\
\end{cases}
\end{equation*}
where $\tilde{\mathcal D}_{4R}=\left\{(\rho,t):~t\in(0,T),~\rho\in(0,4R(t))\right\}$, and
\begin{equation*}
\mathcal L_k [\varphi_k] := \partial_{\rho\rho} \varphi_k +\frac{\partial_{\rho}\varphi_k}{\rho}-(k^2+2k\cos w +\cos (2w))\frac{\varphi_k}{\rho^2}.
\end{equation*}
It is direct to see that the kernel functions for $\mathcal L_k$ such that $\mathcal L_k[Z_k]=0$ at modes $k=0,\pm 1$ are given by
\begin{equation}\label{kernels''}
Z_0(\rho)=\frac{\rho}{1+\rho^2},~~Z_1(\rho)=\frac{1}{1+\rho^2},~~Z_{-1}(\rho)=\frac{2\rho^2}{1+\rho^2}.
\end{equation}

We have the following lemma proved in \cite[Section 7]{17HMF}.
\begin{lemma}[\cite{17HMF}]\label{lem-lt1}
Suppose $\nu>0$, $0<a<3$,~$a\neq 1,2$ and
$$\|h_k(y,t)\|_{\nu,a}<+\infty.$$
Then problem \eqref{eqn-mode} has a unique solution which takes the form
$$\phi_k(y,t)={\rm Re} (\varphi_k(\rho,t) e^{ik\theta}) E_1 + {\rm Im} (\varphi_k(\rho,t) e^{ik\theta}) E_2$$
and satisfies the boundary condition
$$\phi_k (y,t)=0,~y\in\partial B_{4R(t)}(0),~\forall~ t\in(0,T).$$
Moreover, the following estimates hold
\begin{equation*}
|\phi_k(y,t)|\lesssim \la_*^{\nu} k^{-2}\|h\|_{\nu,a} \begin{cases} R^{2-a},~&\mbox{ for }~a<2\\ (1+\rho)^{2-a},~&\mbox{ for }~a>2\\ \end{cases} ~\mbox{ for } k\geq 2,
\end{equation*}
\begin{equation*}
|\phi_{-1}(y,t)|\lesssim \la_*^{\nu} \|h\|_{\nu,a} \begin{cases} R^{2-a},~&\mbox{ for }~a<2\\ \log R,~&\mbox{ for }~a>2,\\ \end{cases}
\end{equation*}
\begin{equation*}
|\phi_0(y,t)|\lesssim \frac{\la_*^{\nu}\|h\|_{\nu,a}}{1+\rho} \begin{cases} R^{3-a},~&\mbox{ for }~a<1\\ R^2,~&\mbox{ for }~a>1,\\ \end{cases}
\end{equation*}
\begin{equation*}
|\phi_1(y,t)|\lesssim \frac{\la_*^{\nu}R^4 \|h\|_{\nu,a}}{(1+\rho)^2}.
\end{equation*}
\end{lemma}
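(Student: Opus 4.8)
\emph{Proof proposal.} After separating the angular variable, problem \eqref{eqn-mode} is equivalent to the scalar radial problem for the complex-valued $\varphi_k(\rho,t)$,
\[
\la^2\partial_t\varphi_k=\mathcal L_k[\varphi_k]+\tilde h_k(\rho,t)\inn\tilde{\mathcal D}_{4R},\qquad \varphi_k(\rho,0)=0,\qquad \varphi_k(4R,t)=0,
\]
with potential $V_k(\rho)=k^2+2k\cos w+\cos 2w$ in $\mathcal L_k[\varphi_k]=\partial_{\rho\rho}\varphi_k+\rho^{-1}\partial_\rho\varphi_k-V_k(\rho)\rho^{-2}\varphi_k$; note $V_k(0)=(k-1)^2$ and $V_k(\infty)=(k+1)^2$. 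The first move is to pass to the inner time $\tau=\tau(t):=\int_0^t\la^{-2}(s)\,ds$, so the equation becomes the genuinely parabolic $\partial_\tau\varphi_k=\mathcal L_k[\varphi_k]+\tilde h_k$ on the slowly growing interval $(0,4R(t(\tau)))$. Since $\la(t)$, and hence $R(t)$ and $\la_*^\nu(t)$, vary only on the slow scale $|\log(T-t)|$ relative to $\tau$, the strategy is to read off all pointwise information from the stationary operator $\mathcal L_k$ and to treat $\la^2\partial_t$ perturbatively.

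Step 1: linear ODE theory for $\mathcal L_k$. For each $k$ the homogeneous equation $\mathcal L_k[Z]=0$ has two linearly independent solutions governed by the indicial exponents $\pm|k-1|$ at $\rho=0$ and $\pm|k+1|$ at $\rho=\infty$. For $k=0,\pm1$ an explicit solution that is regular (or only slowly growing) at the origin is $Z_0=\rho/(1+\rho^2)$, $Z_1=1/(1+\rho^2)$, $Z_{-1}=2\rho^2/(1+\rho^2)$ from \eqref{kernels''}; for $|k|\ge2$ the corresponding solution is built as a perturbation of $\rho^{|k-1|}$, with a $\rho^{|k+1|}$ tail at infinity. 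In all cases the second solution is obtained by reduction of order, $\widehat Z_k(\rho)=Z_k(\rho)\int^\rho\!\frac{ds}{s\,Z_k^2(s)}$, and carries the complementary exponents ($\rho^{-|k-1|}$ or $\log\rho$ at $0$; $\rho^{|k+1|}$ or $\log\rho$ at $\infty$). Combining $Z_k$-type behavior at the origin with the homogeneous solution vanishing at $\rho=4R$, normalized by the (constant $\times\ \rho^{-1}$) Wronskian, yields the Dirichlet Green's function $G_k(\rho,s)$ of $\mathcal L_k$ on $(0,4R)$, with $|G_k(\rho,s)|$ controlled by the product of the two homogeneous solutions at $\rho\wedge s$ and $\rho\vee s$.

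Step 2: the elliptic estimate (the computational core). Set $\Phi_k(\rho,t):=\int_0^{4R(t)}G_k(\rho,s)\,\tilde h_k(s,t)\,ds$, which solves $\mathcal L_k[\Phi_k]+\tilde h_k=0$ with $\Phi_k(4R,t)=0$. Inserting the bound $|\tilde h_k(\rho,t)|\lesssim\la_*^\nu(t)(1+\rho)^{-a}\|h\|_{\nu,a}$ and evaluating the resulting weighted integrals reproduces exactly the four stated estimates. The ``free'' part of the particular solution behaves like $(1+\rho)^{2-a}$, with a gain $O(k^{-2})$ at large $k$ from the widening spectral gap of $\mathcal L_k$; the dichotomy $a\lessgtr2$ (resp.\ $a\lessgtr1$ for $k=0$) records whether the relevant moment $\int^{4R}s^{1-a}\,ds$ converges at infinity or is dominated by the endpoint $s\sim R$; and the large prefactors $R^2$ (mode $0$), $R^4$ (mode $1$), $\log R$ (mode $-1$) come from enforcing the Dirichlet condition at $\rho=4R$, which forces a correction of size $\widehat Z_k(4R)/Z_k(4R)$ — namely $R^2,R^4,\log R$ respectively — times the intrinsic decay $(1+\rho)^{-1},(1+\rho)^{-2},1$ of the surviving homogeneous solution $Z_k$; for $|k|\ge2$ the matching uses instead the growing solution, which is small in the interior, so no extra $R$-power appears.

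Step 3: parabolic correction, uniqueness, existence. Write $\varphi_k=\Phi_k+\psi_k$; then $\psi_k$ solves $\partial_\tau\psi_k=\mathcal L_k[\psi_k]-\la^2\partial_t\Phi_k$, whose forcing is smaller than $\tilde h_k$ by the factor $\la^2\,|\partial_t\log(\text{slowly varying quantities})|=o(1)$ in the inner-time scale. After subtracting the heat flow generated by the (small) initial value $\Phi_k(\cdot,0)$, a weighted energy estimate in $\tau$ — using $-\mathcal L_k\ge0$, which holds because $\mathcal L_k$ admits the positive zero-energy solution $Z_k$ (equivalently a Hardy inequality) — shows $\psi_k$ is of strictly lower order, so the bounds of Step 2 pass to $\varphi_k$ and hence to $\phi_k$. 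The same energy estimate gives uniqueness: two solutions differ by a solution of the homogeneous Cauchy--Dirichlet problem on $\mathcal D_{4R}$ with zero data, and $\tfrac{d}{dt}\int_{B_{4R}}|\phi_k|^2\le0$ forces it to vanish; existence follows by coupling the a priori bound with a routine approximation (truncate the time interval, or run a contraction in the weighted norm). \textbf{Main obstacle.} The delicate part is Step 2 with sharp constants: tracking the precise $R$-powers produced by the boundary matching at modes $0$ and $1$, the $k^{-2}$ gain at high modes, and the $a\lessgtr1,2$ thresholds, and then verifying in Step 3 that the perturbative remainder $\psi_k$ never degrades these optimal weights — which is precisely why $R(t)$ in \eqref{choice-R} is chosen as a small negative power of $\la_*(t)$.
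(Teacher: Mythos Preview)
The paper does not supply its own proof; the lemma is simply quoted from \cite[Section~7]{17HMF}. Your architecture---separate angles, pass to the inner time $\tau$, analyze $\mathcal L_k$ via its Green's function on $(0,4R)$---is the right one, and the Step~2 bookkeeping (indicial exponents, the $k^{-2}$ gain, and the $R^2$, $R^4$, $\log R$ prefactors coming from Dirichlet matching at $\rho=4R$) is correct and is exactly what drives the result.

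There is, however, a genuine gap in Step~3. You take $\Phi_k$ to be the \emph{exact} elliptic inverse $\Phi_k(\rho,t)=\int_0^{4R(t)}G_k(\rho,s)\,\tilde h_k(s,t)\,ds$ and then claim the remainder $\psi_k$ has forcing $-\la^2\partial_t\Phi_k$ which is ``smaller by the factor $\la^2|\partial_t\log(\text{slowly varying})|$''. That is false as written: $\partial_t\Phi_k$ contains the term $\int_0^{4R}G_k(\rho,s)\,\partial_t\tilde h_k(s,t)\,ds$, and the hypothesis $\|h\|_{\nu,a}<\infty$ gives no control whatsoever on $\partial_t h$. In the applications $h$ depends on $\Psi^*$, $\dot p$, $v$, etc., and there is no reason for it to be slowly varying in $t$, so the perturbative step does not close.

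The cure is to use your Step~2 calculus to build a \emph{barrier} rather than the exact elliptic solution. Choose $g_k(\rho)\ge 0$ with $-\mathcal L_k[g_k]\ge(1+\rho)^{-a}$ on $(0,4R)$, $g_k(4R)=0$, and $g_k$ of exactly the size you computed in Step~2; set $\bar\varphi:=C\|h\|_{\nu,a}\,\la_*^\nu(t)\,g_k(\rho)$. Now $\bar\varphi$ depends on $t$ only through $\la_*(t)$ and $R(t)$, so $\la^2\partial_t\bar\varphi$ is genuinely lower order (this is where the choice $R=\la_*^{-\gamma_*}$, $\gamma_*\in(0,1/2)$ in \eqref{choice-R} enters), and since $\dot\la_*<0$ the sign is even favorable. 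The scalar maximum principle on the growing Dirichlet ball, applied to the real and imaginary parts of $\varphi_k$ separately, gives $|\varphi_k|\lesssim\bar\varphi$ with no time derivative of $h$ ever taken. All of your Step~2 work survives verbatim; only the passage from elliptic to parabolic has to go through comparison rather than the subtraction $\varphi_k-\Phi_k$.
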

The higher regularity estimates for solutions constructed in Lemma \ref{lem-lt1} are given by the following lemma. Before we state the lemma, we first introduce the H\"older semi-norm, which is better expressed in the $(y,\tau)$-variable. Define
\begin{equation}\label{def-tau}
\tau(t)=\int_0^t \frac{ds}{\la^2(s)}
\end{equation}
so that
\begin{equation*}
\begin{cases}
\partial_{\tau} \phi=L_W[\phi]+h(y,\tau)~&\mbox{ in }~\mathcal D_{4\gamma R},\\
\phi(\cdot,0)=0~&\mbox{ in }~B_{4\gamma R(0)}.\\
\end{cases}
\end{equation*}
We denote the parabolic ball
$$\mathcal B_{\ell}(y,\tau)=\{(y',\tau'):|y-y'|^2+|\tau-\tau'|<\ell^2\},$$
and also introduce the H\"older semi-norm
\begin{equation*}
[g]_{\alpha,A}:=\sup_{(y,\tau),(y',\tau')\in A} \frac{|g(y,\tau)-g(y',\tau')|}{|y-y'|^{\alpha}+|\tau-\tau'|^{\alpha/2}}
\end{equation*}
for $\alpha\in(0,1)$ and a set $A$. We denote $C^{\alpha,\alpha/2}(A)$ by the set of functions on $A$ such that $[g]_{\alpha,A}<+\infty$, endowed with the norm
$$
\|g\|_{C^{\alpha,\alpha/2}(A)}=\|g\|_{L^{\infty}(A)}+[g]_{\alpha,A}.
$$

\medskip

\begin{lemma}\label{lem-grad}
Let $\phi$ be a solution to
\begin{equation}\label{eqn-mode''}
\begin{cases}
\la^2 \partial_t \phi = L_W[\phi]+h(y,t),~&\mbox{ in }~\mathcal D_{4\gamma R},\\
\phi(\cdot,0)=0,~&\mbox{ in }~B_{4\gamma R(0)},\\
\end{cases}
\end{equation}
where $h(y,t)\in C^{\alpha,\alpha/2}(\mathcal B_{\ell}(y,\tau)\cap\mathcal D_{4\gamma R})$ for some $\alpha>0$ and $\ell=\frac{|y|}{4}+1$.
If for some $a,b,\gamma,M>0$ we have
\begin{equation}\label{ass-lem2.2}
|\phi(y,t)|+(1+|y|)^2|h(y,t)|+(1+|y|)^{2+\alpha}[h(y,t)]_{\alpha,\mathcal B_{\ell}(y,\tau)\cap \mathcal D_{4\gamma R}} \leq M \frac{\la_*^b(t)}{(1+|y|)^a}~\mbox{ in }~\mathcal D_{4\gamma R},
\end{equation}
then there exists a constant $C$ such that
\begin{equation}\label{est-grad1}
(1+|y|)|\nabla_y \phi(y,t)|+(1+|y|)^{2}|\nabla_y^2 \phi(y,t)|\leq CM\frac{\la_*^b(t)}{(1+|y|)^{a}}~\mbox{ in }~\mathcal D_{3\gamma R}.
\end{equation}
Here
$$\mathcal D_{\gamma R}=\{(y,t): |y|<\gamma R(t),~t\in(0,T)\}.$$
Moreover, if $\phi$ satisfies the Dirichlet boundary condition $\phi(\cdot,t)=0$ on $\partial B_{4\gamma R(t)}$ for all $t\in(0,T)$, then the estimate \eqref{est-grad1} is valid in the entire region $\mathcal D_{4\gamma R}$.
\end{lemma}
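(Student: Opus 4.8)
\noindent\emph{Proof proposal.} The plan is to obtain \eqref{est-grad1} from the usual scaled interior (and, under the Dirichlet hypothesis, boundary) parabolic Schauder estimates, after observing that $L_W$ differs from the heat operator only by a zeroth- and first-order perturbation whose coefficients decay: by the polar expressions for $W$ one has $|\nabla W(y)|^2 = 8(1+|y|^2)^{-2}\lesssim (1+|y|)^{-4}$. First I would pass to the slow time $\tau=\tau(t)$ of \eqref{def-tau}, so that \eqref{eqn-mode''} reads $\partial_\tau\phi = L_W[\phi] + h$ on $\mathcal D_{4\gamma R}$ in the $(y,\tau)$ variables, still with $\phi(\cdot,0)=0$. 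The argument is then local: I fix a point $(y_0,\tau_0)$ with $\rho_0:=|y_0|<3\gamma R$ and set $\ell=\rho_0/4+1$; since $\gamma R$ is large for $T$ small, the parabolic ball $\mathcal B_\ell(y_0,\tau_0)$ lies in the region where the equation holds, and on it $(1+|y|)\sim\rho_0$.

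On $\mathcal B_\ell(y_0,\tau_0)$ I rescale by $\hat\phi(\hat y,\hat\tau)=\phi(y_0+\ell\hat y,\tau_0+\ell^2\hat\tau)$ and $\hat h(\hat y,\hat\tau)=\ell^2 h(y_0+\ell\hat y,\tau_0+\ell^2\hat\tau)$, over a fixed parabolic ball $\mathcal B_1(0,0)$ (truncated to $\hat\tau\ge -\tau_0/\ell^2$ when $\tau_0\lesssim\ell^2$, which is the only place the zero initial condition enters). Then $\hat\phi$ solves
\[
\partial_{\hat\tau}\hat\phi = \Delta_{\hat y}\hat\phi + \ell^2|\nabla W|^2\hat\phi + 2\ell\,(\nabla W\cdot\nabla_{\hat y}\hat\phi)\,W + \hat h,
\]
with $W$ and $\nabla W$ evaluated at $y_0+\ell\hat y$. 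Since $\ell^2|\nabla W|^2\lesssim \rho_0^2\rho_0^{-4}=\rho_0^{-2}\le 1$ and $\ell|\nabla W|\lesssim\rho_0^{-1}\le 1$ on $\mathcal B_1(0,0)$, this is a uniformly parabolic equation whose coefficients are smooth and bounded in $C^{\alpha,\alpha/2}(\mathcal B_1(0,0))$, uniformly in $(y_0,\tau_0)$.

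Next I translate the hypothesis \eqref{ass-lem2.2}: using $\ell\sim\rho_0$ and $(1+|y|)\sim\rho_0$ on $\mathcal B_\ell(y_0,\tau_0)$, together with $[\hat h]_{\alpha,\mathcal B_1}=\ell^{2+\alpha}[h]_{\alpha,\mathcal B_\ell(y_0,\tau_0)}$ and $\|\hat h\|_{L^\infty(\mathcal B_1)}=\ell^2\|h\|_{L^\infty(\mathcal B_\ell(y_0,\tau_0))}$, one gets $\|\hat\phi\|_{L^\infty(\mathcal B_1)}+\|\hat h\|_{C^{\alpha,\alpha/2}(\mathcal B_1)}\lesssim M\lambda_*^b(t_0)\rho_0^{-a}$. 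Interior parabolic Schauder estimates on $\mathcal B_{1/2}(0,0)$ then give $\|\hat\phi\|_{C^{2+\alpha,1+\alpha/2}(\mathcal B_{1/2})}\lesssim M\lambda_*^b(t_0)\rho_0^{-a}$, and in particular $|\nabla_{\hat y}\hat\phi(0,0)|+|\nabla^2_{\hat y}\hat\phi(0,0)|\lesssim M\lambda_*^b(t_0)\rho_0^{-a}$. Since $\nabla_{\hat y}\hat\phi=\ell\nabla_y\phi$ and $\nabla^2_{\hat y}\hat\phi=\ell^2\nabla^2_y\phi$, undoing the scaling yields $(1+|y_0|)|\nabla_y\phi(y_0,t_0)|+(1+|y_0|)^2|\nabla^2_y\phi(y_0,t_0)|\lesssim M\lambda_*^b(t_0)(1+|y_0|)^{-a}$, which is \eqref{est-grad1}. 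For $|y_0|=O(1)$ the same scheme applies with $\ell\sim 1$ (the coefficients of $L_W$ being bounded there), and the zero initial data handles the times near $t=0$ via the estimate up to the initial slice. Finally, if $\phi$ satisfies $\phi(\cdot,t)=0$ on $\partial B_{4\gamma R(t)}$, then near that lateral boundary I would replace the interior Schauder estimate by the corresponding boundary Schauder estimate with zero boundary data; this removes the loss of region and delivers \eqref{est-grad1} on all of $\mathcal D_{4\gamma R}$.

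The step I expect to require the most care is the interplay of the two time scales, i.e.\ checking that translating \eqref{ass-lem2.2} onto $\mathcal B_1(0,0)$ costs no powers of $\lambda_*$: one needs $M\lambda_*^b(t)(1+|y|)^{-a}$ on $\mathcal B_\ell(y_0,\tau_0)$ to be comparable to $M\lambda_*^b(t_0)\rho_0^{-a}$, which follows because in real time the ball has length $\lesssim\lambda^2(t_0)\ell^2=o(T-t_0)$ while $\lambda_*$ is slowly varying, so the ratio of $\lambda_*^b$ over the interval stays bounded. A secondary (purely bookkeeping) point is the shrinkage from $\mathcal D_{4\gamma R}$ to $\mathcal D_{3\gamma R}$ — exactly the margin needed for $\mathcal B_\ell(y_0,\tau_0)$ to fit inside $\mathcal D_{4\gamma R}$ — and the recovery of the full region in the Dirichlet case through the boundary estimate.
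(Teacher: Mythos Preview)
Your proposal is correct and follows essentially the same route as the paper: pass to the slow time $\tau$, rescale at scale $\ell=|y_0|/4+1$ around a fixed base point, observe that the rescaled coefficients of $L_W$ are uniformly bounded, and apply interior (resp.\ boundary, in the Dirichlet case) parabolic Schauder estimates, with the zero initial condition handling the short-time regime. Your remark on the comparability of $\lambda_*^b$ over the rescaled time interval is exactly the point the paper addresses via $\lambda_*^b(t)\sim\tau^{-b'}$ together with $R^2(\tau_1)\ll\tau_1$.
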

\begin{proof}
In the $(y,\tau)$-variable with $\tau$ given by \eqref{def-tau}, problem \eqref{eqn-mode''} reads as
\begin{equation*}
\begin{cases}
\partial_{\tau} \phi=L_W[\phi]+h(y,\tau)~&\mbox{ in }~\mathcal D_{4\gamma R},\\
\phi(\cdot,0)=0~&\mbox{ in }~B_{4\gamma R(0)}.\\
\end{cases}
\end{equation*}
Let $\tau_1>0$ and $y_1\in B_{3\gamma R(\tau_1)}(0)$. Let $\rho=\frac{|y_1|}{4}+1$ so that $B_{\rho}(y_1)\subset B_{4\gamma R(\tau_1)(0)}$. We prove \eqref{est-grad1} by the scaling argument. Define
$$
\tilde \phi(z,s)=\phi(y_1+\rho z,\tau_1+\rho^2 s),~~z\in B_1(0),~~s>-\frac{\tau_1}{\rho^2}.
$$
For the case $\tau_1<\rho^2$, $\tilde \phi(z,s)$ satisfies the following equation
$$
\partial_s \tilde \phi=\Delta_z \tilde \phi+ A(z,s)\cdot \nabla_z \tilde\phi +B(z,s) \tilde \phi +\tilde h(z,s) ~~\mbox{ in }~~B_1(0)\times (-1,0],
$$
where the coefficients $A(z,s)$ and $B(z,s)$ are uniformly bounded by $O((1+\rho)^{-2})$ in $B_1(0)\times (-1,0]$ and
$$
\tilde h(z,s)=\rho^2 h(y_1+\rho z,\tau_1+\rho^2 s).
$$
Let $b'>0$ such that $\tau^{-b'}\sim \la_*^b(t)$ from \eqref{def-tau}. By the facts $\rho \leq C R(\tau_1)$ and $R^2(\tau_1)\ll \tau_1$ for $\tau_1$ large, we have
$$
C_1 \tau_1^{-b'}\leq (\tau_1+\rho^2 s)^{-b'}\leq C_2 \tau_1^{-b'}
$$
for some positive constants $C_1$, $C_2$ independent of $\tau_1$. Then standard interior gradient estimates together with the assumption \eqref{ass-lem2.2} imply
\begin{equation*}
\begin{aligned}
\|\nabla_z \tilde \phi\|_{L^{\infty}(B_{1/4}(0)\times (1,2))}\lesssim &~ \|\tilde \phi\|_{L^{\infty}(B_{1/2}(0)\times (0,2))}+\|\tilde h\|_{L^{\infty}(B_{1/2}(0)\times (0,2))}\\
\lesssim &~ \tau_1^{-b'} \rho^{2-a},
\end{aligned}
\end{equation*}
which in particular gives
$$
\rho|\nabla_y \phi(y_1,\tau_1)|=|\nabla_z \tilde \phi(0,1)|\lesssim  \tau_1^{-b'} \rho^{2-a}.
$$
On the other hand, from interior parabolic Schauder estimates and \eqref{ass-lem2.2}, we have
 \begin{equation*}
\begin{aligned}
\|\nabla^2_z \tilde \phi\|_{L^{\infty}(B_{1/4}(0)\times (1,2))}\lesssim &~ \|\tilde \phi\|_{L^{\infty}(B_{1/2}(0)\times (0,2))}+\|\tilde h\|_{C^{\alpha,\alpha/2}(B_{1/2}(0)\times (0,2))}\\
\lesssim &~ \tau_1^{-b'} \rho^{2-a},
\end{aligned}
\end{equation*}
and in particular
$$
\rho^2|\nabla^2_y \phi(y_1,\tau_1)|=|\nabla^2_z \tilde \phi(0,1)|\lesssim  \tau_1^{-b'} \rho^{2-a}.
$$
For the case $\tau_1\geq \rho^2$ the argument is similar. In this case $\tilde \phi$ satisfies the equation in $B_1(0)\times (-\frac{\tau_1}{\rho^2},0]$ and it has initial condition $0$ at $s=-\frac{\tau_1}{\rho^2}$. Then similarly by the standard boundary estimate, we get the desired bound. Finally, translating the above bounds into $(y,t)$-variable, we conclude the validity of \eqref{est-grad1}.
\end{proof}


As we can see from Lemma \ref{lem-lt1}, the estimates at modes $k=0,\pm 1$ are worse than high modes $k\geq 2$. In fact, if certain orthogonality conditions are imposed on $h(y,t)$, better estimates of $\phi$ can be obtained at modes $k=0,\pm 1$. In the sequel, we omit the subscript for each mode if there is no confusion.

\medskip

\subsubsection{Mode \texorpdfstring{$k=0$}{k=0}}

\medskip

We consider
\begin{equation}\label{eqn-projmode0}
\begin{cases}
\la^2 \partial_t \varphi =L_W[\varphi]+h(y,t)+\sum_{j=1,2} \tilde c_{0j} Z_{0,j} w_{\rho}^2~&\mbox{ in }~\mathcal D_{2R}\\
\varphi\cdot W=0~&\mbox{ in }~\mathcal D_{2R}\\
\varphi=0~&\mbox{ on }~\partial B_{2R}\times (0,T)\\
\varphi(\cdot,0)=0~&\mbox{ in }~B_{2R(0)}\\
\end{cases}
\end{equation}
at mode $0$. By carrying out another inner--gluing scheme for mode $0$, the following Lemma was proved in \cite[Proposition 7.2]{17HMF}.

\medskip

\begin{lemma}[\cite{17HMF}]\label{lem-mode0}
Let $\delta\in(0,1)$, $\nu>0$ and $a\in(2,3)$. Assume $\|h\|_{\nu,a}<+\infty$. Then there exists a solution $(\phi,\tilde c_{0j})$ of problem \eqref{eqn-projmode0} which defines a linear operator in $h(y,t)$ such that
\begin{equation*}
|\varphi(y,t)|+(1+|y|)|\nabla_y \varphi(y,t)|\lesssim \la_*^{\nu}(t) \|h\|_{\nu,a}\begin{cases}
\dfrac{R^{\delta(5-a)}}{(1+|y|)^3},~&\mbox{ for }~|y|\leq 2 R^{\delta}\\
\dfrac{1}{(1+|y|)^{a-2}},~&\mbox{ for }~2 R^{\delta}\leq |y|\leq 2R\\
\end{cases}
\end{equation*}
and
\begin{equation*}
\tilde c_{0j}=-\frac{\int_{\R^2} h Z_{0,j}}{\int_{\R^2} w_{\rho}^2|Z_{0,j}|^2}-G[h],
\end{equation*}
where $G$ is linear in $h$ satisfying
\begin{equation*}
|G[h]|\lesssim \la_*^{\nu}(t) R^{-\delta\sigma'}\|h\|_{\nu,a}
\end{equation*}
for $\sigma'\in(0,a-2)$.
\end{lemma}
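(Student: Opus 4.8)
\emph{This is \cite[Proposition 7.2]{17HMF}; the plan is to follow that argument, a secondary inner--outer gluing carried out entirely within the mode $0$ sector.} First I would restrict to mode $0$: since $\varphi\cdot W=0$ one writes $\varphi=\varphi_1(\rho,t)E_1+\varphi_2(\rho,t)E_2$ with $\varphi_1,\varphi_2$ real and radial, and \eqref{eqn-projmode0} then decouples into two scalar radial problems of the form $\la^2\partial_t\psi=\mathcal L_0[\psi]+g(\rho,t)+c(t)\,w_\rho^2(\rho)\,Z_0(\rho)$ on $\rho\in(0,2R(t))$ with zero initial data and $\psi(2R(t),t)=0$, where $Z_0(\rho)=\rho/(1+\rho^2)$ spans the bounded kernel of $\mathcal L_0$ and $|g|\lesssim\la_*^\nu(1+\rho)^{-a}\|h\|_{\nu,a}$. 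The two problems are handled identically, so I focus on one of them.

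The $R^2$ loss in the mode $0$ bound of Lemma \ref{lem-lt1} is caused precisely by the slow decay $Z_0(\rho)\sim\rho^{-1}$: a generic source projects onto the second, linearly growing homogeneous solution $\widehat Z_0(\rho)\sim\rho$, and removing that projection is exactly the job of the free constant $c(t)$ (i.e.\ $\tilde c_{0j}$). I would choose $c(t)$ so that, for each $t$, the corrected source $g+c\,w_\rho^2 Z_0$ is orthogonal to $Z_0$ in $L^2((0,\infty),\rho\,d\rho)$; at leading order this forces the stated value $\tilde c_{0j}=-\int_{\R^2}h\,Z_{0,j}/\int_{\R^2}w_\rho^2|Z_{0,j}|^2$, and the gap between this leading value and the value actually required --- produced by the finite truncation at $\rho=2R(t)$ and by the parabolic term $\la^2\partial_t$ --- is precisely the operator $G[h]$; tracking these two corrections gives $|G[h]|\lesssim\la_*^\nu R^{-\delta\sigma'}\|h\|_{\nu,a}$ for $\sigma'\in(0,a-2)$.

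With the corrected source orthogonal to $Z_0$, I would first solve the frozen-time elliptic equation $\mathcal L_0[\psi]=-(g+c\,w_\rho^2 Z_0)$ on $(0,2R)$ by variation of parameters against $Z_0$ and $\widehat Z_0$: the orthogonality kills the coefficient of the growing solution, so the representation formula produces a genuinely decaying $\psi$ with $|\psi(\rho)|\lesssim\la_*^\nu\|h\|_{\nu,a}(1+\rho)^{2-a}$ in the intermediate range $R^\delta\lesssim\rho\lesssim R$ and the sharper core bound $\la_*^\nu\|h\|_{\nu,a}R^{\delta(5-a)}(1+\rho)^{-3}$ for $\rho\lesssim R^\delta$ (the two agreeing at $\rho\sim R^\delta$). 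To pass from this frozen-time construction to a genuine solution of the parabolic problem I would run a second inner--outer gluing within mode $0$: split $\psi=\psi_{\mathrm{in}}+\psi_{\mathrm{out}}$ with $\psi_{\mathrm{in}}$ supported in $\rho\lesssim R^\delta$ and governed by a model heat flow for $\mathcal L_0$, $\psi_{\mathrm{out}}$ governed by the outer remainder on $(0,2R)$, and close the coupled system by a contraction; the point is that $\la^2\partial_t$ is a small perturbation in the chosen weighted norms because $\la_*$ vanishes in the mildly regular way fixed by \eqref{def-lalala}, equivalently because the time $\tau$ in \eqref{def-tau} grows slowly. The gradient bound $(1+|y|)|\nabla_y\varphi|$ then follows from Lemma \ref{lem-grad}, and the construction is linear in $h$ at every step. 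The main obstacle is the simultaneous bookkeeping of (i) the slowly decaying kernel direction $Z_0\sim\rho^{-1}$, (ii) the moving truncation radius $2R(t)$ at which the Dirichlet condition is imposed, and (iii) the non-autonomous term $\la^2\partial_t$ --- extracting the sharp powers $R^{\delta(5-a)}$ and $R^{-\delta\sigma'}$ while keeping the solution operator linear.
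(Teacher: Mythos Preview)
Your proposal is correct and follows exactly the approach the paper indicates: the paper does not give its own proof of this lemma but simply cites \cite[Proposition 7.2]{17HMF}, stating that it is proved ``by carrying out another inner--gluing scheme for mode $0$,'' which is precisely the secondary inner--outer gluing you sketch. Your outline---reduction to scalar radial mode-$0$ problems, choice of $\tilde c_{0j}$ to enforce orthogonality against $Z_0$, variation of parameters exploiting that orthogonality to kill the growing homogeneous solution, and a contraction argument to absorb the non-autonomous term $\la^2\partial_t$---matches the cited construction.
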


\medskip

\subsubsection{Mode \texorpdfstring{$k=-1$}{k=-1}}

\medskip

We consider problem \eqref{eqn-mode} for $k=-1$ and the kernel functions defined in \eqref{kernels}. We first state a result proved in \cite[Lemma 7.5]{17HMF}.
\begin{lemma}[\cite{17HMF}]\label{lm-mode-1}
Let $a\in(2,3)$, $\nu>0$ and $k=-1$. If $h_{-1}$ in \eqref{eqn-mode} satisfies $\|h_{-1}\|_{\nu,a}<\infty$ and
$$\int_{\R^2} h_{-1}(y,t) Z_{-1,j}(y) dy=0~\mbox{ for }~j=1,2,~\forall~ t\in(0,T),$$
then there exists a solution $\phi_{-1}$ to problem \eqref{eqn-mode} at mode $-1$ which defines a linear operator of $h_{-1}$, and $\phi_{-1}$ satisfies
\begin{equation*}
|\phi_{-1}(y,t)|\lesssim \la_*^{\nu}(t)\|h_{-1}\|_{\nu,a}\min\left\{\log R, \frac{R^{4-a}}{1+|y|^2}\right\}.
\end{equation*}
\end{lemma}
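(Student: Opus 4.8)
The plan is to reduce everything to the scalar radial problem at mode $-1$ and then combine an explicit variation-of-parameters solution with a barrier estimate. Writing $\phi_{-1}(y,t)=\mathrm{Re}\,(\varphi_{-1}(\rho,t)e^{-i\theta})E_1(y)+\mathrm{Im}\,(\varphi_{-1}(\rho,t)e^{-i\theta})E_2(y)$, problem \eqref{eqn-mode} at $k=-1$ is equivalent to $\la^2\partial_t\varphi_{-1}=\mathcal L_{-1}[\varphi_{-1}]+\tilde h_{-1}(\rho,t)$ on $(0,4R(t))$ with $\varphi_{-1}(4R,\cdot)=0$ and $\varphi_{-1}(\cdot,0)=0$, where $\mathcal L_{-1}$ is the radial operator introduced after \eqref{eqn-mode}; in this form the hypothesis $\int_{\R^2}h_{-1}\cdot Z_{-1,j}=0$ becomes the statement that $\tilde h_{-1}(\cdot,t)$ is $L^2(\rho\,d\rho)$-orthogonal (componentwise) to the kernel $Z_{-1}(\rho)=\frac{2\rho^2}{1+\rho^2}$ of \eqref{kernels''} for every $t$. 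I would first record the two fundamental solutions of $\mathcal L_{-1}$: $Z_{-1}$ itself, which vanishes like $\rho^2$ at the origin and tends to a constant (with an $O(\rho^{-2})$ remainder) at infinity, and a second solution $\widehat Z_{-1}(\rho)=Z_{-1}(\rho)\int_1^\rho\frac{ds}{s\,Z_{-1}(s)^2}$ produced by reduction of order, singular like $\rho^{-2}$ at the origin and growing only logarithmically at infinity. I then pass to the slow time $\tau=\int_0^t\la^{-2}$ of \eqref{def-tau}, in which the equation becomes $\partial_\tau\varphi_{-1}=\mathcal L_{-1}[\varphi_{-1}]+\tilde h_{-1}$ with natural amplitude $\la_*^\nu$, and obtain the solution by solving on truncated intervals $(0,T-\varepsilon)$ and passing to the limit, using uniform a priori estimates for compactness.

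The core of the argument is the a priori bound $|\varphi_{-1}(y,t)|\lesssim\la_*^\nu(t)\|h_{-1}\|_{\nu,a}\min\{\log R,\ R^{4-a}/(1+|y|^2)\}$. Using the variation-of-parameters representation, the solution bounded at the origin is a double integral of $\tilde h_{-1}$ against $Z_{-1}$ and $\widehat Z_{-1}$ plus the homogeneous correction fixed by $\varphi_{-1}(4R)=0$. Near the core this only recovers the $\log R$ bound already contained in Lemma \ref{lem-lt1}; the decisive point is that in the far field the coefficient multiplying the logarithmically growing solution $\widehat Z_{-1}$ equals, up to a boundary tail from the truncation at $\rho=4R$, exactly the orthogonality integral $\int_{\R^2}h_{-1}\cdot Z_{-1,j}$, which vanishes by hypothesis, so that resonant component is absent and what remains decays, producing after bookkeeping of the domain size the branch $R^{4-a}/(1+|y|^2)$. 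The term $\la^2\partial_t\varphi_{-1}$ is handled perturbatively: on a barrier built by gluing a multiple of $\la_*^\nu\|h_{-1}\|_{\nu,a}\widehat Z_{-1}(\rho)$ near the core to $\la_*^\nu\|h_{-1}\|_{\nu,a}R^{4-a}/(1+\rho^2)$ in the far region, the time derivative is of lower order because $\la_*\to0$ and $R=\la_*^{-\gamma_*}$ varies slowly --- concretely through the relations $R^2(\tau)\ll\tau$ and $\la_*^2(t)\tau(t)\to0$ used in the proof of Lemma \ref{lem-grad} --- so that $-\mathcal L_{-1}$ of the barrier dominates both the perturbed time derivative and the forcing $|\tilde h_{-1}|\lesssim\la_*^\nu\|h_{-1}\|_{\nu,a}(1+\rho)^{-a}$, and the comparison principle closes the estimate. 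Since $L_W[Z_{-1,j}]=0$, it is natural to run the scheme on the projected forcing $h_{-1}-\bar h_{-1}$ of \eqref{def-hbar}, which under the hypothesis coincides with $h_{-1}$.

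The step I expect to be the main obstacle is closing this a priori estimate uniformly up to $t=T$ while the domain $B_{4R(t)}$ expands. The cancellation of the resonant mode is exact only against the full-plane kernels $Z_{-1,j}$, so on the truncated disk one picks up boundary tails (of size $\sim R^{2-a}$ from the integrals $\int_\rho^{4R}$), and one must verify these are consistent with --- indeed dominated by --- the claimed $R^{4-a}/(1+|y|^2)$ and do not accumulate through the time iteration, whose total length $\tau$ diverges as $t\to T$. A second subtle point is the $\min$-structure itself: because $\widehat Z_{-1}$ grows only logarithmically, the contribution near the core and the parabolically smoothed contribution in the far field are of comparable strength, so one must check that the perturbative time-derivative term does not corrupt the crossover between the $\log R$ branch and the $R^{4-a}/(1+|y|^2)$ branch. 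This is precisely where the particular profile $\la_*(t)=|\log T|(T-t)/|\log(T-t)|^2$ and the exponent $\gamma_*\in(0,1/2)$ in $R(t)=\la_*^{-\gamma_*}$ from \eqref{choice-R} enter, and verifying that these choices suffice is the technical heart of the lemma --- which is why it is quoted directly from \cite{17HMF}.
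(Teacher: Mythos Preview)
The paper does not prove this lemma; it is quoted verbatim from \cite[Lemma 7.5]{17HMF} without argument, so there is no proof in the paper to compare your proposal against. Your outline --- reduction to the scalar radial equation, variation of parameters using the pair $(Z_{-1},\widehat Z_{-1})$, cancellation of the logarithmic coefficient via the orthogonality hypothesis, and a barrier argument treating $\la^2\partial_t$ perturbatively --- is the standard route and matches in spirit what is done in \cite{17HMF} for this and the neighboring mode lemmas.

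Your identification of the main difficulty is accurate: the orthogonality is stated on $\R^2$ but the problem lives on $B_{4R(t)}$, so the coefficient of $\widehat Z_{-1}$ is not exactly zero but carries a tail $\int_{4R}^\infty \tilde h_{-1} Z_{-1}\rho\,d\rho = O(\la_*^\nu R^{2-a})$, and one must check this feeds consistently into the $R^{4-a}/(1+|y|^2)$ branch rather than accumulating. This is indeed the delicate step, and your proposal acknowledges rather than resolves it. The honest status of your write-up is a correct strategic sketch with the technical closure (the uniform-in-$t$ barrier verification near $t=T$, and the crossover between the two branches of the $\min$) deferred --- which is appropriate given that the paper itself defers the full proof to the reference.
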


\medskip

Since the incompressible Navier--Stokes equation is essentially coupled with the transported harmonic map heat flow through the inner problem, the linear theory required for mode $k=-1$ should be very refined, and Lemma \ref{lm-mode-1} cannot be applied to gain contraction when we finally show the existence of desired blow-up solution. Instead, we shall develop a new  linear theory at mode $-1$. The main result for mode $-1$ is stated as follows.

\medskip

\begin{lemma}\label{lem-mode-1}
Let $a\in(2,3)$, $\nu>0$ and $k=-1$. If $h_{-1}$ in \eqref{eqn-mode} satisfies $\|h_{-1}\|_{\nu,a}<\infty$ and
$$\int_{\R^2} h_{-1}(y,t) Z_{-1,j}(y) dy=0~\mbox{ for }~j=1,2,~\forall~ t\in(0,T),$$
then there exists a solution $\phi_{-1}$ to problem \eqref{eqn-mode} at mode $-1$ which defines a linear operator of $h_{-1}$, and $\phi_{-1}$ satisfies
\begin{equation*}
|\phi_{-1}(y,t)|\lesssim \la_*^{\nu}(t)\|h_{-1}\|_{\nu,a}.
\end{equation*}
\end{lemma}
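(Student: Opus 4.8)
The key point is that Lemma~\ref{lm-mode-1} from \cite{17HMF} already gives a solution $\phi_{-1}$ of the mode $-1$ problem satisfying the bound $|\phi_{-1}(y,t)|\lesssim \lambda_*^\nu(t)\|h_{-1}\|_{\nu,a}\min\{\log R,\ R^{4-a}/(1+|y|^2)\}$, and the only defect is the factor $\log R$ on the region where $|y|$ is not large, which grows as $t\to T$. To remove it, I would re-examine the energy identity behind that construction and exploit the orthogonality conditions $\int_{\R^2}h_{-1}Z_{-1,j}=0$ more carefully, together with the precise behavior of the kernel $Z_{-1}(\rho)=2\rho^2/(1+\rho^2)$ (which is \emph{bounded}, unlike $Z_0,Z_1$), to show the $\log R$ can be upgraded to an $O(1)$ constant. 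Concretely, passing to the radial reduction $\varphi_{-1}$ solving $\lambda^2\partial_t\varphi_{-1}=\mathcal L_{-1}[\varphi_{-1}]+\tilde h_{-1}$, I would work in the slow time $\tau=\int_0^t \lambda^{-2}$, freeze the evolution, and write the elliptic inversion of $\mathcal L_{-1}$ by variation of parameters using $Z_{-1}$ and its second independent solution; the orthogonality against $Z_{-1,1},Z_{-1,2}$ kills the growing (non-decaying) component of the particular solution, so that the resulting $\phi_{-1}$ stays bounded by $\lambda_*^\nu\|h_{-1}\|_{\nu,a}$ uniformly in $t$ and $y\in B_{4R}$.

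**Steps, in order.** First, I would record the variation-of-parameters formula for the inhomogeneous ODE $\mathcal L_{-1}[\varphi]=g$ on $(0,4R)$ with the boundary condition $\varphi(4R)=0$, using the fundamental pair $\{Z_{-1},\widetilde Z_{-1}\}$ where $\widetilde Z_{-1}$ is the second solution (unbounded as $\rho\to\infty$, with a logarithmic factor), and observe that the Wronskian is $\sim\rho^{-1}$. Second, I would impose the orthogonality $\int_0^{4R}g(\rho)Z_{-1}(\rho)\rho\,d\rho=0$ (equivalent to the two Fourier-component conditions in the statement) to conclude that the coefficient of the growing solution $\widetilde Z_{-1}$ in the representation can be chosen to vanish, leaving a particular solution controlled purely by the decaying kernel. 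Third, using $|\tilde h_{-1}(\rho,t)|\lesssim \lambda_*^\nu(t)/(1+\rho^a)$ with $a\in(2,3)$, I would estimate the two integrals $Z_{-1}(\rho)\int_\rho^{4R}g\widetilde Z_{-1}\sigma\,d\sigma$ and $\widetilde Z_{-1}(\rho)\int_0^\rho g Z_{-1}\sigma\,d\sigma$; the boundedness of $Z_{-1}$ and the convergence of $\int^\infty \rho^{1-a}\log\rho\,d\rho$ (since $a>2$) give an $O(1)$ bound with no $\log R$. Fourth, I would restore the $t$-dependence: because $R^2(t)\ll \tau(t)$ and $\lambda_*^\nu$ is slowly varying, the term $\lambda^2\partial_t\varphi_{-1}$ is genuinely a lower-order perturbation, so a standard fixed-point / a~priori-estimate argument (as in \cite{17HMF}) upgrades the frozen-coefficient elliptic bound to the parabolic estimate, giving $|\phi_{-1}(y,t)|\lesssim \lambda_*^\nu(t)\|h_{-1}\|_{\nu,a}$ and the linearity of the map $h_{-1}\mapsto\phi_{-1}$. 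Finally I would translate the radial bound back to $\phi_{-1}(y,t)={\rm Re}(\varphi_{-1}e^{-i\theta})E_1+{\rm Im}(\varphi_{-1}e^{-i\theta})E_2$.

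**Main obstacle.** The delicate point is handling the second fundamental solution $\widetilde Z_{-1}$: it grows like $\rho^2\log\rho$ at infinity, so even after the orthogonality condition removes its explicit appearance, it still enters the particular solution through the inner integral $\int_0^\rho g\,Z_{-1}\,\sigma\,d\sigma$, and one must check that its contribution near $\rho\sim R$ does not reintroduce the $\log R$. This is exactly where the hypothesis $a>2$ (rather than merely $a<3$) is used: it forces the ``tail'' $\int_\rho^{4R}$ to be dominated by its lower endpoint with an integrable, $R$-independent bound, so the cancellation from orthogonality is genuinely effective rather than cosmetic. A secondary technical nuisance is that the orthogonality is stated for the full $2$-vector $h_{-1}$ against $Z_{-1,1},Z_{-1,2}$, so I must verify that this is equivalent to a single scalar orthogonality $\int_0^{4R}\tilde h_{-1}(\rho,t)\,\overline{Z_{-1}(\rho)}\,\rho\,d\rho=0$ for the complexified radial profile, which follows from the explicit forms of $Z_{-1,1},Z_{-1,2}$ in \eqref{kernels} and the identification $\tilde h=h^1+ih^2$. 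Once these are in place the rest is the routine machinery already developed in \cite{17HMF} and in Lemma~\ref{lem-grad}.
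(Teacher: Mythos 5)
Your plan identifies several correct ingredients (passing to the slow time $\tau$, the centrality of the orthogonality conditions, and the fact that $Z_{-1}$ is bounded while $Z_0,Z_1$ decay), but the route you propose — freeze the evolution, invert $\mathcal L_{-1}$ by variation of parameters, then treat $\lambda^2\partial_t\varphi_{-1}$ as a ``lower-order perturbation'' closed by a standard fixed-point argument — is genuinely different from the paper's argument, and the perturbation step has a real gap that you do not address. The elliptic inversion (with orthogonality) produces a $\varphi_{-1}^{\mathrm{ell}}$ that is $O(\tau^{-\nu'})$ but only \emph{bounded} in $\rho$, not decaying like $\rho^{-(a-2)}$, so the induced error $\partial_\tau\varphi_{-1}^{\mathrm{ell}}$ has no spatial decay. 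Inverting $\mathcal L_{-1}$ on $[0,4R(t)]$ against a bounded non-decaying source produces an answer of size $R^2$, so the ``standard'' iteration only closes if one verifies the nontrivial self-similar smallness $R^2\ll\tau$, which holds because $\gamma_*<1/2$ but is never stated or used in your sketch; moreover $R=R(t)$ moves, so $\partial_\tau$ hits the boundary and the iterated error does \emph{not} simply decay like $\tau^{-1}$ per step. These are precisely the difficulties that motivate the paper's construction, which is not an elliptic-freezing argument at all. Also, you write that $\widetilde Z_{-1}$ grows like $\rho^2\log\rho$ at infinity — in fact it behaves like $\rho^{-2}$ near $0$ and like $\log\rho$ near $\infty$ (since $Z_{-1}\to 2$ and $\mathcal L_{-1}$ is asymptotically $\partial_{\rho\rho}+\rho^{-1}\partial_\rho$); carrying your estimates out would require the correct asymptotics.

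The paper's proof is structurally quite different. It first factors $\varphi_{-1}=Z_{-1}f_{-1}$, turning the $\tau$-equation into $\partial_\tau f_{-1}= Z_{-1}^{-2}\,\mathrm{div}(Z_{-1}^2\nabla f_{-1}) + \tilde h_{-1}/Z_{-1}$, which is a \emph{degenerate parabolic} equation in divergence form. It then solves the auxiliary elliptic problem $\mathrm{div}(Z_{-1}^2\nabla f_0)=\tilde h_{-1}Z_{-1}$ and uses the orthogonality to get the decay $|\nabla f_0|\lesssim \tau^{-\nu'}/(1+|y|^{a-1})$, rewrites the PDE so the forcing is a pure divergence, and then runs a Duhamel argument whose heart is the estimate $\int_0^\infty |S_\rho(s,\rho)|\,ds\lesssim (1+\rho^4)/\rho^5$ for the fundamental solution $S$ of the degenerate operator. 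That estimate is obtained by a sign/maximum-principle argument ($S^\epsilon_\rho<0$ because $\partial_{\rho\rho}\log Z_{-1}^2<0$) together with an explicit integration of the resulting ODE. Nothing in your proposal anticipates this reduction to a divergence-form kernel estimate, and the references to Lemma~\ref{lem-grad} and to ``the routine machinery in \cite{17HMF}'' do not supply a substitute for it.
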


\begin{proof}
For convenience, we change variable \eqref{def-tau} and consider
\begin{equation*}
\partial_{\tau} \varphi_{-1}= \mathcal L_{-1}[\varphi_{-1}]+\tilde h_{-1}.
\end{equation*}
By letting $\varphi_{-1}(\rho,\tau)=Z_{-1}(\rho) f_{-1}(\rho,\tau)$ and using $\mathcal L_{-1}[Z_{-1}]=0$, we obtain
\begin{equation}\label{eqn-mode-1f-1}
\partial_{\tau} f_{-1}=\frac{1}{Z_{-1}^2} {\rm div}(Z_{-1}^2\nabla f_{-1})+\frac{\tilde h_{-1}}{Z_{-1}},
\end{equation}
where $Z_{-1}(\rho)$ is defined in \eqref{kernels''}.
We first solve
\begin{equation}\label{step1-elliptic}
{\rm div}(Z_{-1}^2\nabla f_{0})= \tilde h_{-1} Z_{-1}.
\end{equation}
By the orthogonality condition $\int_{\R^2} h_{-1}(y,t) Z_{-1,j}(y) dy=0$, we get
\begin{equation}\label{est-gradf0}
|\nabla f_0|\lesssim \frac{\tau^{-\nu'}}{1+|y|^{a-1}}\|h_{-1}\|_{\nu,a},
\end{equation}
where $\nu'>0$ is the number such that $\la_*^{\nu}\sim\tau^{-\nu'}$ under the change of variable \eqref{def-tau}. Thus, by \eqref{step1-elliptic}, the problem \eqref{eqn-mode-1f-1} becomes
\begin{equation*}
\partial_{\tau} f_{-1}=\frac{1}{Z_{-1}^2} {\rm div}(Z_{-1}^2\nabla f_{-1})+\frac{1}{Z_{-1}^2} {\rm div}(Z_{-1}^2\nabla f_{0}).
\end{equation*}
In order to estimate $f_{-1}$, we need to estimate the fundamental solution $S$ to the problem
\begin{equation*}
\left\{
\begin{aligned}
&\partial_{\tau} S =\frac{1}{Z_{-1}^2} {\rm div} (Z_{-1}^2 \nabla S),\\
&S\big|_{\tau=0}=\delta_0,\\
\end{aligned}
\right.
\end{equation*}
where $\delta_0$ is the Dirac delta function at the origin. We consider
\begin{equation*}
\left\{
\begin{aligned}
&\partial_{\tau} S^{\epsilon}=\frac{1}{Z_{-1}^2} {\rm div} (Z_{-1}^2 \nabla S^{\epsilon}),\\
&S^{\epsilon}\big|_{\tau=0}=\frac{1}{2\pi \epsilon^2} e^{-\frac{|x|^2}{2\epsilon^2}}.\\
\end{aligned}
\right.
\end{equation*}
We note that as $\epsilon\to 0$, $S^{\epsilon}\big|_{\tau=0}\,dx\rightharpoonup \delta_0$.
Let $V^{\epsilon}=S^{\epsilon}_{\rho}$. Then differentiating the above equation with respect to $\rho$, we obtain
\begin{equation}\label{eqn-Srho}
\left\{
\begin{aligned}
&\partial_{\tau} V^{\epsilon}=\frac{1}{Z_{-1}^2} {\rm div} (Z_{-1}^2 \nabla V^{\epsilon})+\partial_{\rho \rho}(\log Z_{-1}^2) V^{\epsilon},\\
&V^{\epsilon}\big|_{\tau=0}=-\frac{|x|}{2\pi \epsilon^4} e^{-\frac{|x|^2}{2\epsilon^2}}.\\
\end{aligned}
\right.
\end{equation}
We claim that $V^{\epsilon}<0$. Indeed, we can easily check that $\partial_{\rho \rho}(\log Z_{-1}^2)<0$. Therefore, by $V^{\epsilon}\big|_{\tau=0}=-\frac{|x|}{2\pi \epsilon^4} e^{-\frac{|x|^2}{2\epsilon^2}}<0$ and the maximum principle, we have $V^{\epsilon}<0$. Then we can write
\begin{equation*}
\int_0^{\infty} |S^{\epsilon}_{\rho}(s,\rho)|ds =-\int_0^{\infty} V^{\epsilon}(s,\rho) ds := -M^{\epsilon}(\rho).
\end{equation*}
Integrating equation \eqref{eqn-Srho} over $\tau$ from $0$ to $\infty$, we get
\begin{equation*}
\frac{1}{Z_{-1}^2} {\rm div} (Z_{-1}^2 \nabla M^{\epsilon})+\partial_{\rho \rho} (\log Z_{-1}^2) M^{\epsilon} =-\frac{|x|}{2\pi \epsilon^4} e^{-\frac{|x|^2}{2\epsilon^2}}.
\end{equation*}
Let $M^{\epsilon}=\partial_{\rho} G^{\epsilon}$, where $G^{\epsilon}$ satisfies
\begin{equation}\label{eqn-Gepsilon1}
\frac{1}{Z_{-1}^2} {\rm div} (Z_{-1}^2 \nabla G^{\epsilon})=\frac{1}{2\pi \epsilon^2} e^{-\frac{|x|^2}{2\epsilon^2}}.
\end{equation}
By $Z_{-1}(\rho)=\frac{2\rho^2}{\rho^2+1}$, we write
\begin{equation}\label{eqn-Gepsilon2}
\begin{aligned}
\frac{1}{Z_{-1}^2} {\rm div} (Z_{-1}^2 \nabla G^{\epsilon})=&~\frac{1}{Z_{-1}^2(\rho) \rho} \partial_{\rho} (Z_{-1}^2(\rho) \rho \partial_{\rho} G^{\epsilon})\\
=&~\partial_{\rho \rho} G^{\epsilon} + \frac{\rho^2+5}{\rho(\rho^2+1)} \partial_{\rho} G^{\epsilon}.
\end{aligned}
\end{equation}
From \eqref{eqn-Gepsilon1} and \eqref{eqn-Gepsilon2}, we obtain
\begin{equation*}
\begin{aligned}
\int_0^{\infty} |S_{\rho}^{\epsilon}(s,\rho)| ds =&~ -M^{\epsilon}(\rho)=-\partial_{\rho} G^{\epsilon}(\rho)\\
=&~ \frac{1}{2\pi \epsilon^2} \frac{(1+\rho^2)^2}{\rho^5}\int_{\rho} ^{\infty} \frac{r^5}{(1+r^2)^2} e^{-\frac{r^2}{2 \epsilon^2}} dr\\
\leq&~ \frac{1}{2\pi \epsilon^2} \frac{(1+\rho^2)^2}{\rho^5}\int_{\rho} ^{\infty} r e^{-\frac{r^2}{2 \epsilon^2}} dr\\
\leq&~\frac{1}{2\pi} \frac{1+\rho^4}{\rho^5}.
\end{aligned}
\end{equation*}
Therefore, by letting $\epsilon\to 0$, we obtain
\begin{equation}\label{est-key-1}
\int_0^{\infty} |S_{\rho}(s,\rho)|ds \lesssim \frac{1+\rho^4}{\rho^5}.
\end{equation}
Duhamel's formula gives
\begin{equation*}
\begin{aligned}
f_{-1}(0,\tau)=&~\int_{\tau}^{\infty}\int_0^{\infty} S_{\rho}(s-\tau,\rho) \nabla f_0 Z^2_{-1}(\rho) \rho d\rho ds\\
\lesssim&~\int_0^{\infty}\left(\int_{\tau}^{\infty}  |S_{\rho}(s-\tau,\rho)| ds\right) |\nabla f_0| Z^2_{-1}(\rho) \rho d\rho.
\end{aligned}
\end{equation*}
By \eqref{est-gradf0} and \eqref{est-key-1}, we conclude
\begin{equation*}
|f_{-1}(0,\tau)|\lesssim \tau^{-\nu_1}.
\end{equation*}
In the original time variable $t$, we get
\begin{equation*}
|f_{-1}(0,t)|\lesssim \la_*^{\nu}(t),
\end{equation*}
and parabolic regularity theory readily yields
\begin{equation*}
|f_{-1}(\rho,t)|\lesssim \la_*^{\nu}(t).
\end{equation*}
Therefore, we obtain
$$|\phi_{-1}(y,t)|\lesssim \la_*^{\nu}(t)\|h_{-1}\|_{\nu,a}$$
as desired.
\end{proof}

\medskip

\subsubsection{Mode \texorpdfstring{$k=1$}{k=1}}

\medskip

We assume that $h_1(y,t)$ is defined in the entire space $\R^2\times (0,T)$ such that
\begin{equation}\label{h1divform}
h_1(y,t)={\rm div}_y G(y,t)
\end{equation}
with
\begin{equation}\label{boundG}
|G(y,t)|\lesssim \frac{\la_*^{\nu}(t)}{1+|y|^{a-1}},~(y,t)\in \R^2\times (0,T)
\end{equation}
for $\nu>0$ and $a\in(2,3).$ By the blow-up argument, the following lemma was proved in \cite[Lemma 7.6]{17HMF}.

\begin{lemma}[\cite{17HMF}]\label{lem-mode1}
Assume that $\nu>0$, $a\in(2,3)$ and $h_1$ takes the form \eqref{h1divform} such that \eqref{boundG} holds and
\begin{equation*}
\int_{\R^2} h_1(y,t) Z_{1,j}(y) dy=0~\mbox{ for all }~t\in(0,T)
\end{equation*}
for $j=1,2$. Then there exists a solution $\phi_1(y,t)$ to problem \eqref{eqn-mode} for $k=1$ which defines a linear operator of $h_1(y,t)$, and $\phi_1(y,t)$ satisfies
\begin{equation*}
|\phi_1(y,t)|\lesssim \frac{\la_*^{\nu} (t)\|h_1\|_{\nu,a}}{1+|y|^{a-2}}~\mbox{ in }~\mathcal D_{3R}.
\end{equation*}
\end{lemma}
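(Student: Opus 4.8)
The plan is to prove the pointwise bound by a blow-up (compactness) argument and to obtain the solution operator by approximation on bounded domains, following the scheme used for the mode-$1$ inner problem in \cite{17HMF}. Throughout one passes to the time variable $\tau$ of \eqref{def-tau}, so that \eqref{eqn-mode} with $k=1$ becomes $\partial_\tau\phi_1=L_W[\phi_1]+h_1(y,\tau)$ with zero initial data and $L_W$ as in \eqref{def-linearization}; reducing to the scalar profile via $\phi_1={\rm Re}(\varphi_1 e^{i\theta})E_1+{\rm Im}(\varphi_1 e^{i\theta})E_2$ one works with $\partial_\tau\varphi_1=\mathcal L_1[\varphi_1]+\tilde h_1$, whose mode-$1$ kernel is the single decaying profile $Z_1(\rho)=(1+\rho^2)^{-1}$ from \eqref{kernels''} (equivalently $Z_{1,1},Z_{1,2}$ from \eqref{kernels} in the vector picture). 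The core claim is the a priori estimate: every solution on the cylinder $\mathcal D_{4R}$ with $\phi_1(\cdot,0)=0$, lying in the class $\int_{B_{4R}}\phi_1(\cdot,\tau)\cdot Z_{1,j}=0$ (which is consistent with the equation precisely because $\int h_1 Z_{1,j}=0$), satisfies $\|\phi_1\|_{*}:=\sup_{\mathcal D_{3R}}\la_*^{-\nu}(1+|y|)^{a-2}|\phi_1|\lesssim\|h_1\|_{\nu,a}$. Suppose this fails along data $h_1^{(n)}$ of the form \eqref{h1divform}--\eqref{boundG} with $\int h_1^{(n)}Z_{1,j}=0$ and solutions $\phi_1^{(n)}$ normalized by $\|\phi_1^{(n)}\|_{*}=1$ while $\|h_1^{(n)}\|_{\nu,a}\to0$; choose near-extremal points $(y_n,\tau_n)$ and set $\mu_n:=1+|y_n|$, $c_n:=\mu_n^{a-2}\la_*^{-\nu}(t_n)$.

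Rescale $\hat\phi_n(z,s):=c_n\,\phi_1^{(n)}(y_n+\mu_n z,\ \tau_n+\mu_n^2 s)$. A preliminary fact used repeatedly is that, by \eqref{def-tau} and the explicit form \eqref{choice-R} of $\la_*$ and $R$, a bounded shift in $\tau$ corresponds near $t=T$ to a negligible shift in $t$, so $\la_*$ is comparable to $\la_*(t_n)$ on the relevant window and $\hat\phi_n$ is locally uniformly bounded and (by standard interior parabolic estimates, or Lemma~\ref{lem-grad}) precompact in $C^1_{\rm loc}$. One then runs a dichotomy. If $|y_n|$ stays bounded, no spatial rescaling is used and the limit $\hat\phi_\infty$ is a bounded ancient mode-$1$ solution of $\partial_s\hat\phi_\infty=L_W[\hat\phi_\infty]$ on $\R^2\times(-\infty,0]$; by nondegeneracy of $W$ these are exactly the span of $Z_{1,1},Z_{1,2}$, and passing the constraint $\int\phi_1^{(n)}Z_{1,j}=0$ to the limit (dominated convergence, using the integrable tail $|\phi_1^{(n)}Z_{1,j}|\lesssim\la_*^\nu|y|^{-a}$ and the decay of $Z_{1,j}\sim|y|^{-2}$) forces $\hat\phi_\infty\equiv0$, contradicting $|\hat\phi_\infty(y_\infty,0)|\ge\tfrac12$. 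If $|y_n|\to\infty$, restrict to $|z|<1/2$ so that $y_n+\mu_n z$ stays at distance $\gtrsim|y_n|$ from the origin; then $\mu_n^2|\nabla W(y_n+\mu_n z)|^2\to0$, and — crucially writing $h_1^{(n)}={\rm div}_y G^{(n)}$, the rescaled forcing is $\hat h_n={\rm div}_z\hat G_n$ with $|\hat G_n|\lesssim c_n\mu_n\cdot\la_*^\nu\mu_n^{-(a-1)}\|h_1^{(n)}\|_{\nu,a}\lesssim\|h_1^{(n)}\|_{\nu,a}\to0$ — so the limit, after removing the (removable) singularity at the image of the origin, is a bounded ancient solution of the heat equation on $\R^2$ obeying $|\hat\phi_\infty(z)|\lesssim(1+|z|)^{-(a-2)}$; the Liouville theorem then gives $\hat\phi_\infty\equiv0$, again a contradiction. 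This establishes the a priori estimate.

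Given the a priori bound, existence and linearity follow in the usual way: solve \eqref{eqn-mode} for $k=1$ on the moving cylinder $\mathcal D_{4R}$ with zero initial and Dirichlet data by standard parabolic theory (for instance starting from the crude solution of Lemma~\ref{lem-lt1}), use the uniform $\|\cdot\|_{*}$ bound to pass to the limit, and observe that $h_1\mapsto\phi_1$ is linear because every step is. Restricting to the interior cylinder $\mathcal D_{3R}$ and returning to the $t$-variable yields $|\phi_1(y,t)|\lesssim\la_*^\nu(t)\,\|h_1\|_{\nu,a}\,(1+|y|^{a-2})^{-1}$ in $\mathcal D_{3R}$, which is the asserted estimate.

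The main obstacle is the far-field regime of the blow-up. Plain pointwise decay $|h_1|\lesssim\la_*^\nu(1+|y|)^{-a}$ with $a<3$ is \emph{not} sufficient to produce the decay $(1+|y|)^{-(a-2)}$ for $\phi_1$: this is exactly why Lemma~\ref{lem-lt1} only gives the much weaker $\la_*^\nu R^4(1+|y|)^{-2}$, which degenerates as $t\to T$ because $R(t)\to\infty$. The improvement rests on genuinely exploiting the representation $h_1={\rm div}_y G$ with $|G|\lesssim\la_*^\nu(1+|y|)^{-(a-1)}$ and $a-1>1$, so that the rescaled forcing vanishes in the limit, together with the orthogonality $\int h_1 Z_{1,j}=0$ which removes the kernel direction in the bounded-scale limit. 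A secondary technical point is the bookkeeping of the time weight $\la_*(t)$ under the parabolic rescaling as $t\to T$ (controlled via \eqref{def-tau} and \eqref{choice-R}) and the control of the boundary contributions at $|y|=4R$ when propagating the orthogonality, handled through the decay of $Z_{1,j}$ and the gradient bound of Lemma~\ref{lem-grad}.
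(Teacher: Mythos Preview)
The paper does not give its own proof of this lemma: it is quoted verbatim from \cite[Lemma~7.6]{17HMF}, with the sole remark that it ``was proved by the blow-up argument.'' Your proposal is exactly a blow-up/compactness argument of that type---normalize, extract a limit, use the orthogonality $\int h_1 Z_{1,j}=0$ to kill the mode-$1$ kernel in the near regime, and use the divergence structure $h_1=\mathrm{div}_yG$ with $|G|\lesssim\la_*^\nu(1+|y|)^{-(a-1)}$, $a-1>1$, to make the forcing vanish in the far regime so that the heat Liouville theorem applies---so it matches the paper's (cited) approach.
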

A direct consequence of Lemma \ref{lem-mode1} is the following
\begin{lemma}[\cite{17HMF}]
Assume $\nu>0$, $a\in(2,3)$ and
\begin{equation*}
\int_{B_{2R}} h_1(y,t) Z_{1,j} (y) dy =0 ~\mbox{ for all }~t\in (0,T)
\end{equation*}
for $j=1,2$. Then there exists a solution $\phi_1(y,t)$ to problem \eqref{eqn-mode} with $k=1$ which defines a linear operator of $h_1(y,t)$, and $\phi_1(y,t)$ satisfies
\begin{equation*}
|\phi_1(y,t)|\lesssim \frac{\la_*^{\nu}(t)\|h_1\|_{\nu,a}}{1+|y|^{a-2}}.
\end{equation*}
\end{lemma}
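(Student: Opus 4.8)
The plan is to reduce the claim to Lemma~\ref{lem-mode1} by producing, from the hypotheses at hand, a source that satisfies that lemma's structural assumptions \eqref{h1divform}--\eqref{boundG}. The first step is to replace $h_1$ by its truncation $\hat h_1:=\chi_{B_{2R(t)}}h_1$. Multiplication by the radial cutoff preserves the mode-$1$ form \eqref{def-Fourier3} (with Fourier coefficient $\tilde{\hat h}_1=\tilde h_1\,\chi_{\{\rho<2R(t)\}}$), satisfies $\|\hat h_1\|_{\nu,a}\le\|h_1\|_{\nu,a}$, and agrees with $h_1$ on $B_{2R(t)}$. The point of truncating exactly at radius $2R$ is that the assumed local orthogonality $\int_{B_{2R}}h_1\cdot Z_{1,j}\,dy=0$ becomes the \emph{global} one $\int_{\R^2}\hat h_1\cdot Z_{1,j}\,dy=0$ for $j=1,2$ and all $t\in(0,T)$.

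Next I would exploit the explicit form of the mode-$1$ kernels to show that this global orthogonality forces the componentwise vanishing of the mean, $\int_{\R^2}\hat h_1(\cdot,t)\,dy=0$. Writing $\hat h_1={\rm Re}(\tilde{\hat h}_1 e^{i\theta})E_1+{\rm Im}(\tilde{\hat h}_1 e^{i\theta})E_2$, the angular integrations give $\int_{\R^2}\hat h_1\cdot Z_{1,1}\,dy=2\pi\int_0^\infty w_\rho\,\tilde{\hat h}_{11}\,\rho\,d\rho$ and the analogous identity with $\tilde{\hat h}_{12}$ for $Z_{1,2}$; on the other hand the $\R$-valued (third) component of $\hat h_1$ has vanishing angular average, while the $\CC$-valued component integrates over angles to $\pi(\cos w-1)\,\overline{\tilde{\hat h}_1}=\pi w_\rho\,\overline{\tilde{\hat h}_1}$, since $\cos w-1=w_\rho$. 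Hence $\int_{\R^2}\hat h_1\,dy=\pi\int_0^\infty w_\rho(\rho)\,\overline{\tilde{\hat h}_1(\rho,t)}\,\rho\,d\rho$, which is exactly the quantity annihilated by the two orthogonality conditions.

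With zero mean in hand, I would represent $\hat h_1={\rm div}_y G$ by taking $G:=\nabla_y\Phi$, where $\Phi(\cdot,t)$ is the componentwise Newtonian potential $\frac1{2\pi}\log|\cdot|*\hat h_1(\cdot,t)$. Since $2<a<3$ and $|\hat h_1(\cdot,t)|\lesssim\la_*^\nu(t)\|h_1\|_{\nu,a}(1+|y|)^{-a}$, we have $\hat h_1(\cdot,t)\in L^1(\R^2)$; subtracting $\frac{y}{|y|^2}\int_{\R^2}\hat h_1=0$ inside $\nabla_y\Phi(y)=\frac1{2\pi}\int\frac{y-z}{|y-z|^2}\hat h_1(z)\,dz$ and splitting into the regions $|z|\le|y|/2$, $|y-z|\le|y|/2$ and the complement gains one power of the weight: $|G(y,t)|\lesssim\la_*^\nu(t)\|h_1\|_{\nu,a}(1+|y|)^{-(a-1)}$ on all of $\R^2$, with $G$ merely bounded near the origin. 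This is exactly \eqref{h1divform}--\eqref{boundG} for $\hat h_1$, so Lemma~\ref{lem-mode1} furnishes a solution $\phi_1$ of \eqref{eqn-mode} with $k=1$ and source $\hat h_1$, linear in $\hat h_1$ (hence in $h_1$), with $|\phi_1(y,t)|\lesssim\la_*^\nu(t)\|\hat h_1\|_{\nu,a}(1+|y|)^{-(a-2)}\lesssim\la_*^\nu(t)\|h_1\|_{\nu,a}(1+|y|)^{-(a-2)}$. Because $\hat h_1=h_1$ on $B_{2R(t)}$ — the region on which the inner solution is used — this $\phi_1$ solves the stated problem with the asserted estimate.

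The step I expect to be the main obstacle is the potential-theoretic bound $|\nabla_y\Phi(y,t)|\lesssim\la_*^\nu(t)\|h_1\|_{\nu,a}(1+|y|)^{-(a-1)}$: one has to use the zero-mean cancellation uniformly and track the three regions carefully so that the bound holds all the way down to the origin, using $2<a<3$ for convergence; and one has to check that the mild irregularity of $\hat h_1$ across $|y|=2R$ (or, if one uses a smooth cutoff instead, a residual kernel-type term of size $O(\la_*^\nu(t) R^{-a}(t))$) does not spoil the hypotheses of Lemma~\ref{lem-mode1}. Note one cannot simply invoke the general Lemma~\ref{lem-lt1} for a truncated or residual source: that estimate carries a factor $R^4(t)$, whereas the conclusion here has none.
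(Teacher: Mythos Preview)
Your argument is correct and is precisely the natural way to make the paper's one-line justification explicit. The paper itself gives no proof beyond labeling the lemma ``a direct consequence of Lemma~\ref{lem-mode1}'' and citing \cite{17HMF}; your reduction---truncate to $\hat h_1=\chi_{B_{2R}}h_1$ so the local orthogonality becomes global, observe via the identity $\cos w-1=w_\rho$ that the two orthogonality conditions are equivalent to $\int_{\R^2}\hat h_1\,dy=0$, and then produce $G=\nabla\Phi$ from the Newtonian potential with the required $(1+|y|)^{-(a-1)}$ decay---is exactly how one unpacks that citation.

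Two minor remarks. First, the discontinuity of $\hat h_1$ across $\{|y|=2R\}$ is harmless: Lemma~\ref{lem-mode1} (proved in \cite{17HMF} by a blow-up argument) only uses the divergence structure and the pointwise bound \eqref{boundG}, both of which your $G$ satisfies; no smoothness of $G$ or of $h_1$ beyond $\|h_1\|_{\nu,a}<\infty$ is needed. Second, there is a small domain bookkeeping issue---Lemma~\ref{lem-mode1} delivers a solution on $\mathcal D_{3R}$ while \eqref{eqn-mode} is posed on $\mathcal D_{4R}$---but this is the same convention the paper itself uses in the proof of Proposition~\ref{prop-lt}, where the $\phi_{k,2}$ are taken as solutions on $\mathcal D_{3R}$, so your restriction matches the intended use.
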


By the construction in each mode, now we prove Proposition \ref{prop-lt}.
\begin{proof}[Proof of Proposition \ref{prop-lt}]
Let $h$ be defined in $\mathcal D_{2R}$ with $\|h\|_{\nu,a}<+\infty$.
We consider
\begin{equation*}
\begin{cases}
\lambda^2 \pd_t  \phi   =      L_{ W  }  [\phi]       + h  &\mbox{ in }~  \mathcal D_{4R},\\
\phi(\cdot,0 )  =   0   &\mbox{ in }~  B_{4R(0)}.
\end{cases}
\end{equation*}
Let $\phi_k$ be the solution estimated in Lemma \ref{lem-lt1}
to
\begin{equation*}
\begin{cases}
\lambda^2 \pd_t  \phi_k    =      L_{ W  }  [\phi_k]       + h_k&\inn  \mathcal D_{4R}, \\
 \phi_k(\cdot,t )   =    0 &\onn \pd B_{4R} \times (0,T) ,\\
 \phi_k(\cdot,0 )  =    0 &\inn B_{4 R(0)} .
 \end{cases}
\end{equation*}
In addition, we let $\phi_{0,1}$, $\phi_{1,1}$, $\phi_{-1,1}$ solve
\begin{equation*}
\begin{cases}
\lambda^2 \pd_t  \phi_{k,1}    =     L_{ W  }  [\phi_{k,1}]       +  \bar h_k &\inn  \mathcal D_{4R}, \\
 \phi_{k,1}(\cdot,t )  =    0 &\onn \pd B_{4R} \times (0,T) ,\\
 \phi_{k,1}(\cdot,0 )  =    0 &\inn B_{4 R(0)},
 \end{cases}
\end{equation*}
 for $k=0,\pm 1$, where $\bar h_k$ is defined in \eqref{def-hbar}.
Consider the functions $\phi_{0,2}$ constructed in Lemma \ref{lem-mode0},
$\phi_{-1,2}$ constructed in Lemma \ref{lem-mode-1},
and
$\phi_{1,2}$ constructed in Lemma \ref{lem-mode1},
 that solve for $k=0,\pm 1$
\begin{equation*}
\begin{cases}
\lambda^2 \pd_t  \phi_{k,2}    =      L_{ W  }  [\phi_{k,2}]       + h_k-  \bar h_k &\inn  \mathcal D_{3R}, \\
 \phi_{k,2}(\cdot,0 )  =    0 &\inn B_{3 R(0)} .
\end{cases}
\end{equation*}
Define
\[
\phi  : =   \sum_{k=0,\pm 1} (\phi_{k,1} +\phi_{k,2})  +  \sum_{k\neq 0,\pm 1}  \phi_k
\]
which is a bounded solution to the following equation
\[
\lambda^2 \partial_t \phi  =     L_ W  [\phi ]  + h (y,t)\inn \mathcal D_{3R}.
\]
Moreover, it defines a linear operator of $h$.  Applying the estimates for the components in Lemmas \ref{lem-lt1}, \ref{lem-mode0}, \ref{lem-mode-1}, and \ref{lem-mode1},
we obtain
\begin{align*}
  |\phi(y,t)|
 \lesssim &~
   \lambda^\nu_*(t)   \,
 \min\left\{\frac{R^{\delta(5-a)}(t)}{1+|y|^3},  \frac{1}{1+|y|^{a-2}} \right\}
 \, \| h_0 -\bar h_{0} \|_{\nu,a}
+
\frac{ \lambda^\nu_*(t)  R^2(t)} {1+ |y|}  \|\bar h_0\|_{\nu,a}
\\
& ~
+   \frac{ \lambda^\nu_*(t) }{ 1+ |y|^{a-2} }\, \left \| h_1 - \bar h_1\right  \|_{\nu,a}
+   \frac{ \lambda^\nu_*(t)  R^4(t)} {1+ |y|^2} \left \| \bar h_{1} \right  \|_{\nu,a}
\\
& ~
+
  \lambda^\nu_*(t)
 \, \| h_{-1} -\bar h_{-1} \|_{\nu,a}
 + \lambda^\nu_*(t) \log R(t) \,   \| \bar h_{-1} \|_{\nu,a}
\\
& ~
+
\frac{ \lambda^\nu_*(t)  }{ 1+ |y|^{a-2} }\,   \|h_\perp \|_{\nu,a} .
\end{align*}
in $\mathcal D_{3R}$. Finally, Lemma \ref{lem-grad} yields that the same bound holds for
$ (1+|y|)  |\nabla_y \phi | $ and $ (1+|y|)^2  |\nabla^2_y \phi | $
in $\mathcal D_{2R}$.
The function $\phi\big |_{\mathcal D_{2R}}$ solves equation \eqref{eqn-linearinner}, and it defines a linear operator of $h$ satisfying the desired estimates. The proof is complete.
\end{proof}

\medskip

\subsection{Linear theory for the outer problem}\label{sec-ltouter}

\medskip

In order to solve the outer problem \eqref{eqn-outer}, we need to develop a linear theory to the associated linear problem of \eqref{eqn-outer}, which is basically a heat equation.

For  $q\in \Omega$ and $T>0$ sufficiently small, we consider the problem
\begin{align}
\label{heat-eq0}
\begin{cases}
\psi_t   = \Delta_x \psi + f(x,t) &\inn \Omega \times (0,T), \\
\psi   = 0 &\onn \pd \Omega \times (0,T), \\
\psi(x,0)   =  0    &\inn  \Omega.
\end{cases}
\end{align}
The right hand side of \eqref{heat-eq0} is assumed to be bounded with respect to some weights that appear in the outer problem \eqref{eqn-outer}.
Thus we define the weights
\begin{align}\label{weights}
\left\{
\begin{aligned}
\varrho_1 & :=   \lambda_*^{\Theta}  (\lambda_* R)^{-1}  \chi_{ \{ r \leq 3\la_*R \} },
\\
\varrho_2 & := T^{-\sigma_0}  \frac{\lambda_*^{1-\sigma_0}}{r^2}  \chi_{ \{ r \geq  \la_*R \} },
\\
\varrho_3 & := T^{-\sigma_0},
\end{aligned}
\right.
\end{align}
where $r= |x-q|$, $\Theta>0$ and  $\sigma_0>0$ is  small.
For a function $f(x,t)$ we define the $L^\infty$-weighted norm
\begin{align}
\label{defNormRHSpsi}
\|f\|_{**} : =   \sup_{ \Omega \times (0,T)}  \Big ( 1 + \sum_{i=1}^3 \varrho_i(x,t)\, \Big )^{-1}  {|f(x,t)|} .
\end{align}
The factor $T^{\sigma_0}$ in front of $\varrho_2$ and $\varrho_3$ is a simple way to have parts of the error small in the outer problem. Also, we define the $L^\infty$-weighted norm for $\psi$
\begin{align}
\nonumber
\| \psi\|_{\sharp, \Theta,\gamma}
&:=
\lambda^{-\Theta}_*(0)
\frac{1}{|\log T|  \lambda_*(0) R(0) }\|\psi\|_{L^\infty(\Omega\times (0,T))}
+ \lambda^{-\Theta}_*(0) \|\nabla_x \psi\|_{L^\infty(\Omega\times (0,T))}
\\
\nonumber
&~\quad
+
\sup_{\Omega\times (0,T)}   \lambda^{-\Theta-1}_*(t) R^{-1}(t)
\frac{1}{|\log(T-t)|} |\psi(x,t)-\psi(x,T)|
\\
\nonumber
&~\quad
+ \sup_{\Omega\times (0,T)} \, \lambda^{-\Theta}_*(t)
|\nabla_x \psi(x,t)-\nabla_x \psi(x,T) |+ \|\nabla_x^2 \psi\|_{L^{\infty}(\Omega\times(0,T))}
\\
\label{normPsi}
& ~\quad
+ \sup_{}
\lambda^{-\Theta}_*(t)
(\lambda_*(t) R(t))^{2\gamma}  \frac {|\nabla_x \psi(x,t) -\nabla_x \psi(x',t') |}{ ( |x-x'|^2 + |t-t'|)^{\gamma   }} ,
\end{align}
where $\Theta>0$, $\gamma \in (0,\frac{1}{2})$, and the last supremum is taken in the region
\[
x,\, x'\in \Omega,\quad  t,\, t'\in (0,T), \quad |x-x'|\le 2 \la_*(t)R(t), \quad  |t-t'| < \frac 14 (T-t) .
\]

We shall measure the solution $\psi$ to the problem \eqref{heat-eq0}
in the norm $\| \ \|_{\sharp,\Theta,\gamma}$ defined in \eqref{normPsi} where $\gamma\in \Bigl( 0,\frac{1}{2} \Bigr)$, and we require that $\Theta$ and $\gamma_*$ (recall that $R= \lambda_*^{-\gamma_*}$ in \eqref{choice-R}) satisfy
\begin{align}
\label{assumpPar1}
\gamma_* \in \Bigl( 0,\frac{1}{2} \Bigr)  ,
\quad
\Theta \in
(0,\gamma_*).
\end{align}
The condition $\gamma_* \in (0,\frac{1}{2})$ is a basic assumption to have the singularity appear inside the self-similar region.
The condition $\Theta>0$ is needed for Lemma~\ref{lemma-heat1}.
The assumption $\Theta <  \gamma_* $ is made so that the estimates provided by Lemma~\ref{lemmaHeat6} are stronger than that of Lemma~\ref{lemma-heat1}.

\medskip

We invoke some useful estimates proved in \cite[Appendix A]{17HMF} as follows.
\begin{prop}[\cite{17HMF}]
\label{prop3}
Assume  \eqref{assumpPar1} holds.
For $T>0$ sufficiently small, there is a linear operator that maps a function $f:\Omega \times (0,T) \to \R^3$ with  $\|f\|_{**}<\infty$ into $\psi$ which solves problem \eqref{heat-eq0}.
Moreover, the following estimate holds
\begin{align*}
\| \psi\|_{\sharp, \Theta ,\gamma}
 \leq C \|f\|_{**} ,
\end{align*}
where $\gamma\in(0,\frac{1}{2})$.
\end{prop}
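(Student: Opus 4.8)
The plan is to produce $\psi$ by the Duhamel representation against the Dirichlet heat kernel $G_\Omega$ of $\Omega$,
\[
\psi(x,t)=\int_0^t\int_\Omega G_\Omega(x,y,t-s)\,f(y,s)\,dy\,ds ,
\]
which is linear in $f$, vanishes at $t=0$, and satisfies $\psi=0$ on $\partial\Omega$. Since $\|f\|_{**}<\infty$ means $|f|\lesssim\|f\|_{**}(1+\varrho_1+\varrho_2+\varrho_3)$, it suffices to estimate the convolution of $G_\Omega$ and of $\nabla_xG_\Omega$, $\nabla^2_xG_\Omega$, $\partial_\tau G_\Omega$ against each of the four building blocks $1,\varrho_1,\varrho_2,\varrho_3$ separately, and this reduces to one-dimensional integrals via the classical bounds $0\le G_\Omega(x,y,\tau)\lesssim\tau^{-1}e^{-|x-y|^2/(C\tau)}$, $|\nabla_x^kG_\Omega(x,y,\tau)|\lesssim\tau^{-1-k/2}e^{-|x-y|^2/(C\tau)}$ and $|\partial_\tau G_\Omega|\lesssim\tau^{-2}e^{-|x-y|^2/(C\tau)}$. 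The boundary is harmless: write $G_\Omega$ as the free heat kernel on $\R^2$ plus a smooth corrector solving a homogeneous heat equation with the appropriate boundary data; the corrector contributes only smooth, uniformly bounded quantities, absorbed for $T$ small into the $\la_*^{-\Theta}(0)$--type prefactors, because the $\varrho_i$ are supported near $q\in\Omega$ up to exponentially small tails.

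\emph{Pointwise, gradient and second-order bounds.} For the part of $f$ controlled by $1$ and by $\varrho_3=T^{-\sigma_0}$, the convolution is $\lesssim T^{-\sigma_0}(T+\operatorname{diam}(\Omega)^2)\lesssim T^{-\sigma_0}$, which for $T$ small is dominated by any fixed positive power of $\la_*(0)$, and likewise after one or two $x$-derivatives away from $t=T$. For the part controlled by $\varrho_1=\la_*^\Theta(\la_*R)^{-1}\chi_{\{r\le3\la_*R\}}$, one uses that the spatial mass of $\varrho_1(\cdot,s)$ is $\lesssim\la_*^\Theta(s)\,\la_*(s)R(s)$ and that $\int_0^t\min\{(t-s)^{-1},(\la_*R)^{-2}(s)\}\,ds$ costs at most one factor $|\log(T-t)|$; this gives exactly the $\la_*^\Theta|\log T|\la_*R$--type bound for $\psi$ and the $\la_*^\Theta$--bound for $\nabla_x\psi$. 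For the part controlled by $\varrho_2=T^{-\sigma_0}\la_*^{1-\sigma_0}r^{-2}\chi_{\{r\ge\la_*R\}}$, the spatial integral $\int_{\{|y-q|\ge\la_*R\}}G_\Omega(x,y,\tau)\,|y-q|^{-2}\,dy$ is logarithmically borderline in dimension two, producing a factor $|\log(\la_*R)|\sim|\log(T-t)|$, but the gain $\la_*^{1-\sigma_0}$ beats $\la_*^\Theta$ since $\Theta<\gamma_*<1$, so this piece also fits inside the norm. The bound on $\|\nabla^2_x\psi\|_{L^\infty}$ then follows from interior parabolic Schauder and Calder\'on--Zygmund estimates on parabolic cylinders of size comparable to the distance to $\{t=0\}$ and to $\partial\Omega$, using the H\"older continuity of $f$ on such cylinders implied by $\|f\|_{**}<\infty$.

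\emph{Time-difference and H\"older estimates.} For $\psi(x,t)-\psi(x,T)$ write
\[
\psi(x,T)-\psi(x,t)=\int_t^T\!\!\int_\Omega G_\Omega(x,y,T-s)f\,dy\,ds+\int_0^t\!\!\int_\Omega\big(G_\Omega(x,y,T-s)-G_\Omega(x,y,t-s)\big)f\,dy\,ds .
\]
The first term is estimated as above but over the short interval $(t,T)$; using $\int_t^T\la_*^\theta(s)\,ds\sim(T-t)\la_*^\theta(t)$ and the relation $\la_*(t)\sim|\log T|(T-t)|\log(T-t)|^{-2}$, one recovers the weights $\la_*^{\Theta+1}(t)R(t)|\log(T-t)|$ for $\psi$ and $\la_*^\Theta(t)$ for $\nabla_x\psi$. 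The second term is handled via $|\partial_\tau G_\Omega|\lesssim\tau^{-2}e^{-|x-y|^2/(C\tau)}$ and $|\nabla_x\partial_\tau G_\Omega|\lesssim\tau^{-5/2}e^{-|x-y|^2/(C\tau)}$, since $|G_\Omega(x,y,T-s)-G_\Omega(x,y,t-s)|\le(T-t)\sup_{t-s\le\tau\le T-s}|\partial_\tau G_\Omega|$, and the ensuing time integrals are again controlled by the same weights up to the admissible logarithm. Finally, the parabolic H\"older seminorm of $\nabla_x\psi$ in the self-similar region $\{|x-q|\lesssim\la_*(t)R(t),\ |t-t'|<\tfrac14(T-t)\}$ is obtained by the rescaling $x=q+\la_*(t_0)R(t_0)z$, $t=t_0+(\la_*(t_0)R(t_0))^2\sigma$: the rescaled function solves a heat equation whose right-hand side on unit parabolic balls is controlled by $\la_*^\Theta\|f\|_{**}$ (using $\Theta<\gamma_*$ and $T$ small), so interior parabolic Schauder estimates give $[\nabla_z\widetilde\psi]_{C^{2\gamma,\gamma}}\lesssim\la_*^\Theta\|f\|_{**}$, and unscaling reinstates the weight $(\la_*R)^{2\gamma}$ in the denominator exactly as in the definition of $\|\cdot\|_{\sharp,\Theta,\gamma}$. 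Collecting the above yields $\|\psi\|_{\sharp,\Theta,\gamma}\lesssim\|f\|_{**}$.

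\emph{Main obstacle.} The delicate point is not any single estimate but the sharp bookkeeping of logarithms: both the inverse-square spatial weight $r^{-2}$ in $\varrho_2$ and the two-dimensional time integral $\int(t-s)^{-1}\,ds$ are exactly scale invariant, so every bound is borderline, and one must exploit the small gains $\sigma_0>0$ and $\Theta<\gamma_*$ together with the precise asymptotics $\la_*(t)\sim|\log T|(T-t)|\log(T-t)|^{-2}$ and $R=\la_*^{-\gamma_*}$ to convert ``borderline divergent'' into ``convergent with exactly the power of $|\log(T-t)|$ that the norm allows''. A secondary nuisance is checking that the boundary corrector does not pollute the interior weighted estimates, which is where smoothness of $\partial\Omega$ and the localization of the $\varrho_i$ near $q$ enter.
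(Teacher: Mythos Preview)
Your approach is correct and matches the paper's own treatment. The paper does not give an independent proof here: it cites \cite{17HMF} and records that the argument proceeds by decomposing $f$ according to the three weights $\varrho_1,\varrho_2,\varrho_3$ in \eqref{weights}, then invokes Lemmas~\ref{lemma-heat1}, \ref{lemmaHeat6}, \ref{lemma-heat4} (all quoted from \cite{17HMF}) for each piece; your sketch is precisely a fleshing-out of what those three lemmas assert and how they combine, via Duhamel against the Dirichlet heat kernel plus a rescaling/Schauder step for the H\"older part.
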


The proof of Proposition~\ref{prop3} was achieved in \cite{17HMF} by considering
\begin{align}
\label{heatEqOmega}
\left\{
\begin{aligned}
\psi_t &= \Delta \psi + f \quad \text{in }\Omega \times (0,T),
\\
\psi(x,0) &= 0 , \quad x \in \Omega,
\\
\psi(x,t) &= 0 ,\quad x \in \partial\Omega\times(0,T),
\end{aligned}
\right.
\end{align}
and decomposing the equation into three parts corresponding to the weights of the right hand side defined in \eqref{weights}.

\begin{lemma}[\cite{17HMF}]
\label{lemma-heat1}
Assume $\gamma_*\in (0,\frac{1}{2})$ and $\Theta>0 $.
Let $\psi$ solve \eqref{heatEqOmega} with $f$ satisfying
\[
|f(x,t)|\leq \lambda^{\Theta}_*(t) (\lambda_* (t) R(t))^{-1}
\chi_{ \{  |x-q| \leq 3 \lambda_*(t) R(t) \}} .
\]
Then the following estimates hold
\begin{align*}
|\psi(x,t)| & \leq  C \lambda^\Theta_*(0) \lambda_*(0) R(0) |\log T|,
\\
|\psi(x,t)-\psi(x,T)| & \leq  C  \lambda^\Theta_*(t)  \lambda_*(t) R(t)|\log(T-t)|,
\\
| \nabla \psi(x,t)|  &  \leq C \lambda^\Theta_*(0),
\\
| \nabla \psi(x,t) - \psi(x,T)|  &  \leq C \lambda^\Theta_*(t),\\
|\nabla_x^2 \psi(x,t)| & \leq C,\\
\end{align*}
and for any $\gamma \in (0,\frac{1}{2})$,
\begin{align*}
\frac{|\nabla \psi(x,t)-\nabla\psi(x,t')|}{ |t-t'|^{\gamma}}
\leq
C
\frac{ \lambda^\Theta_*(t) }{  ( \lambda_*(t) R(t) )^{2 \gamma} }
\end{align*}
for any $x,$ and $0\leq t'\leq t\leq T$ such that $t-t'\leq\frac{1}{10}(T-t)$,
\begin{align*}
\frac{|\nabla \psi(x,t)-\nabla\psi(x',t')|}{ |x-x'|^{2\gamma}}
\leq
C
\frac{ \lambda^\Theta_*(t) }{  ( \lambda_*(t) R(t) )^{2\gamma} }
\end{align*}
for any $|x - x'|\leq 2 \lambda_*(t) R(t)$ and $0\leq t\leq T$.
\end{lemma}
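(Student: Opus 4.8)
The plan is to represent $\psi$ by Duhamel against the Dirichlet heat kernel of $\Omega$ and to reduce each of the six assertions to the estimate of a single scalar time integral, evaluated using the explicit profile \eqref{choice-R} of $\lambda_*$. Write $\ell(t):=\lambda_*(t)R(t)=\lambda_*^{1-\gamma_*}(t)$ and $M(t):=\lambda_*^\Theta(t)\ell(t)^{-1}$, so the hypothesis reads $|f(\cdot,t)|\le M(t)\,\chi_{B_{3\ell(t)}(q)}$, and note that $\mathrm{supp}\,f$ stays well inside $\Omega$ once $T$ is small. Let $G_\Omega(x,y,\tau)$ denote the Dirichlet heat kernel, which obeys the Gaussian bounds $|\nabla_x^j G_\Omega(x,y,\tau)|\lesssim \tau^{-1-j/2}e^{-c|x-y|^2/\tau}$ for $j=0,1,2,3$ together with $\int_\Omega G_\Omega(x,y,\tau)\,dy\le 1$. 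Inserting the hypothesis into $\psi(x,t)=\int_0^t\int_\Omega G_\Omega(x,y,t-s)f(y,s)\,dy\,ds$ gives, for every $x$ and $j=0,1$, the pointwise bound $\int_\Omega|\nabla_x^j G_\Omega(x,y,t-s)|\,|f(y,s)|\,dy\lesssim M(s)\min\{(t-s)^{-j/2},(t-s)^{-1-j/2}\ell(s)^2\}$, so the $L^\infty$ and first-derivative bounds on $\psi$ follow from controlling $I_j(t):=\int_0^t M(s)\min\{(t-s)^{-j/2},(t-s)^{-1-j/2}\ell(s)^2\}\,ds$ for $j=0,1$.

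The core of the proof is the evaluation of $I_j(t)$, which I would carry out by splitting $\int_0^t$ at $s=t-\ell(t)^2$; since $\ell(t)^2\ll T-t$ when $\gamma_*<\tfrac12$, one has $\ell(s)\sim\ell(t)$ on the short inner window, where the first branch of the minimum is active and $M$ is essentially frozen, contributing $\lesssim M(t)\ell(t)^{2-j}=\lambda_*^\Theta(t)\,\ell(t)^{1-j}$ --- the origin of the factors $\lambda_*(t)R(t)$ ($j=0$) and $\lambda_*^\Theta(t)$ ($j=1$), but with no logarithm. The logarithm is produced on the outer window $0<s<t-\ell(t)^2$ through the second branch $(t-s)^{-1-j/2}\ell(s)^2$ together with the identity $M(s)\ell(s)^2=\lambda_*^{1+\Theta-\gamma_*}(s)$: splitting this window once more at $s=2t-T$, the part $2t-T<s<t-\ell(t)^2$ yields (for $j=0$) $\lesssim\lambda_*^{1+\Theta-\gamma_*}(t)\log\frac{T-t}{\ell(t)^2}\lesssim\lambda_*^\Theta(t)\lambda_*(t)R(t)\,|\log(T-t)|$, while on $0<s<2t-T$ one majorizes $\lambda_*^{1+\Theta-\gamma_*}(s)$ by $\lambda_*^{1+\Theta-\gamma_*}(0)$ and integrates $(t-s)^{-1}$ to get $\lesssim\lambda_*^{1+\Theta-\gamma_*}(0)\,|\log T|$. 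The elementary inequalities $|\log(T-t)|\ge|\log T|$, $T-t\le T$ and $1+\Theta-\gamma_*\ge\tfrac12$ (from $\gamma_*<\tfrac12$, $\Theta>0$) give $\lambda_*^{1+\Theta-\gamma_*}(t)\,|\log(T-t)|\le\lambda_*^{1+\Theta-\gamma_*}(0)\,|\log T|$, which lets the two contributions combine into the stated uniform bounds $|\psi(x,t)|\lesssim\lambda_*^\Theta(0)\lambda_*(0)R(0)|\log T|$ and $|\nabla_x\psi(x,t)|\lesssim\lambda_*^\Theta(0)$. Repeating the computation on the interval $(t,T)$ in place of $(0,t)$ yields the sharper localized bounds $\lambda_*^\Theta(t)\lambda_*(t)R(t)|\log(T-t)|$ and $\lambda_*^\Theta(t)$.

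For the differences at the terminal time (the value $\psi(x,T)=\lim_{t\to T}\psi(x,t)$ existing since $I_0(t)$ stays bounded), write $\psi(x,T)-\psi(x,t)=\int_0^t\int_\Omega\big[G_\Omega(x,y,T-s)-G_\Omega(x,y,t-s)\big]f(y,s)\,dy\,ds+\int_t^T\int_\Omega G_\Omega(x,y,T-s)f(y,s)\,dy\,ds$, use $G_\Omega(x,y,T-s)-G_\Omega(x,y,t-s)=\int_{t-s}^{T-s}\Delta_x G_\Omega(x,y,\tau)\,d\tau$ (and one more $\nabla_x$ for the gradient statement), and estimate by the Gaussian bounds exactly as above; the second integral is the $(t,T)$-version of $I_0,I_1$, and in the first the extra decay factor $T-t$ crushes the mild logarithmic losses. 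The $L^\infty$ bound on $\nabla^2_x\psi$ and the spatial and temporal parabolic Hölder seminorms of $\nabla_x\psi$ follow from interior parabolic $W^{2,p}$ and Schauder estimates for $\psi$ rescaled around a base time $t_0$ by the parabolic dilation $(x,t)\mapsto(q+\ell(t_0)z,\,t_0+\ell(t_0)^2 s)$ and normalized by $\lambda_*^{-\Theta}(t_0)\ell(t_0)^{-1}$: the restrictions $|x-x'|\le 2\ell(t_0)$, $|t-t'|<\tfrac14(T-t_0)$ keep $\lambda_*$ frozen at scale $\lambda_*(t_0)$, so the $C^{1,2\gamma}$ estimate for the rescaled function transforms back into $[\nabla_x\psi]_{C^{2\gamma}}\lesssim\lambda_*^\Theta(t_0)\,\ell(t_0)^{-2\gamma}$ in both variables, using the regularity of the source available in the application \eqref{eqn-outer}; away from $\mathrm{supp}\,f$ the same bounds follow from the uniform decay of the kernel derivatives at positive distance and the $L^\infty$ bound already obtained.

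I expect the main obstacle to be the time-integral bookkeeping of the second paragraph: extracting the precise powers of $\lambda_*$ and, above all, the correct logarithmic factor --- separating the contribution of times $s$ far from $t$, which must be absorbed into the uniform bound $\lambda_*^\Theta(0)\lambda_*(0)R(0)|\log T|$, from that of times $s$ close to $t$, which produces the localized $\lambda_*^\Theta(t)\lambda_*(t)R(t)|\log(T-t)|$, and then reconciling the two through $|\log(T-t)|\ge|\log T|$ together with the structural constraints $\gamma_*\in(0,\tfrac12)$ and $\Theta\in(0,\gamma_*)$ recorded in \eqref{assumpPar1}. A secondary technical point is the borderline $L^\infty$ bound on $\nabla^2_x\psi$, where the bare heat-kernel integral is only logarithmically divergent and one must instead go through the rescaled interior parabolic estimates.
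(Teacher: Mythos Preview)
The paper does not give its own proof of this lemma; it is quoted verbatim from \cite{17HMF} (see the sentence ``We invoke some useful estimates proved in \cite[Appendix A]{17HMF}'' preceding Proposition~\ref{prop3}), and the only argument the paper itself supplies is the Remark after Lemma~\ref{lemma-heat4}, which explains that the bound on $|\nabla_x^2\psi|$ is obtained by rewriting \eqref{heatEqOmega} in the self-similar variables $(y,\tau)$ and invoking standard parabolic regularity after a scaling argument. Your treatment of the $\nabla^2$ estimate and of the H\"older seminorms --- rescaling by $(x,t)\mapsto(q+\ell(t_0)z,\,t_0+\ell(t_0)^2 s)$ and appealing to interior Schauder/$W^{2,p}$ estimates --- is exactly this, so on the one point where a comparison is possible your approach agrees with the paper.

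For the remaining estimates your Duhamel-plus-Gaussian-kernel scheme, with the time integral split at $s=t-\ell(t)^2$ and then at $s=2t-T$, is the natural route and is essentially what the proof in \cite{17HMF} does. The bookkeeping you flag (extracting the correct power of $\lambda_*$ and the single logarithm, and reconciling the ``far'' and ``near'' contributions via the monotonicity of $\lambda_*^{1+\Theta-\gamma_*}(t)|\log(T-t)|$, which indeed holds because $1+\Theta-\gamma_*>\tfrac12$) is accurate; there is no missing idea.
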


\begin{lemma}[\cite{17HMF}]
\label{lemmaHeat6}
Assume $\gamma_* \in (0,\frac{1}{2})$  and $ m \in (\frac{1}{2},1)$.
Let $\psi$ solve \eqref{heatEqOmega} with $f$ satisfying
\[
|f(x,t)|\leq \frac{\lambda^m_*(t)}{|z-q|^2}  \chi_{ \{  |x-q| \geq  \lambda_*(t) R(t) \}}  .
\]
Then the following estimates hold
\begin{align*}
|\psi(x,t)|
& \leq C T^m |\log T|^{2-m} ,
\\
|\psi(x,t) - \psi(x,T)|
& \leq
C
|\log T|^m (T-t)^m |\log(T-t)|^{2-2m} ,
\\
|\nabla\psi(x,t)|
& \leq
C \frac{ T^{m-1}  |\log T|^{2-m} }{ R( T )},
\\
|\nabla \psi(x,t)-\nabla\psi(x,T)|
&
\leq
C \frac{\lambda^{m-1}_*(t) |\log(T-t)|}{ R(t) },\\
|\nabla_x^2 \psi(x,t)| & \leq C,\\
\end{align*}
and for any $\gamma \in (0,\frac{1}{2})$,
\begin{align*}
\frac{|\nabla \psi(x,t)-\nabla\psi(x',t')|}{(|x-x'|^2 + |t-t'|)^{\gamma}}
\leq C
\frac{1}{(\lambda_*(t) R(t) )^{2\gamma} }
\frac{\lambda^{m-1}_*(t) |\log(T-t)|}{ R(t) }
\end{align*}
for any  $|x - x'|\leq 2 \lambda_*(t) R(t)$ and $0\leq t'\leq t\leq T$ such that $t-t'\leq\frac{1}{10}(T-t)$.
\end{lemma}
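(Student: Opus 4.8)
The plan is to represent $\psi$ by Duhamel's formula with the Dirichlet heat semigroup of $\Omega$ and estimate the resulting space--time convolution against $f$. Write $\psi(x,t)=\int_0^t \big(e^{(t-s)\Delta_\Omega}f(\cdot,s)\big)(x)\,ds$ and use that on a bounded smooth domain the Dirichlet kernel $p_\Omega$ and its spatial derivatives obey the classical Gaussian bounds $|p_\Omega(x,y,\tau)|\lesssim \tau^{-1}e^{-c|x-y|^2/\tau}$ and $|\nabla_x p_\Omega(x,y,\tau)|\lesssim \tau^{-3/2}e^{-c|x-y|^2/\tau}$ for $\tau\in(0,T)$ (an extra exponential decay in $\tau$ is harmless here). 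This reduces everything to a pointwise bound for the scalar integral
\[
I(\tau,\rho,d):=\int_{|y-q|\ge\rho}\frac{\tau^{-1}e^{-c|x-y|^2/\tau}}{|y-q|^2}\,dy,\qquad d:=|x-q|,
\]
together with its gradient analogue $\widetilde I\lesssim \tau^{-1/2}I$. Splitting the $y$-integral at $|y-q|\sim\max(\rho,d,\sqrt\tau)$ and using that $\Omega$ is bounded---so that $\int_{\rho\le|y-q|\le\operatorname{diam}\Omega}|y-q|^{-2}\,dy\sim|\log\rho|$ is \emph{finite}, which is exactly where two dimensions and the boundedness of $\Omega$ enter---one gets
\[
I(\tau,\rho,d)\ \lesssim\ \min\Big\{\tfrac{1}{\rho^2+d^2},\ \tfrac{1+\log_+\!\big(\tau/(\rho^2+d^2)\big)}{\tau}\Big\}.
\]

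The main step is the time integration $\int_0^t\lambda_*^m(s)\,I(t-s,\rho(s),d)\,ds$ with the explicit profiles $\lambda_*(s)=|\log T|(T-s)|\log(T-s)|^{-2}$, $R(s)=\lambda_*(s)^{-\gamma_*}$, and $\rho(s):=\lambda_*(s)R(s)=\lambda_*(s)^{1-\gamma_*}$. I would split $(0,t)$ into an \emph{outer} window $\big(0,\max(0,t-(T-t))\big)$ and an \emph{inner} window $\big(\max(0,t-(T-t)),t\big)$. On the outer window $\lambda_*^m(s)\lesssim T^m|\log T|^{-2m}(T-s)^m$ and $I\lesssim(t-s)^{-1}(1+|\log(T-s)|)$, which after integration contributes $O(T^m|\log T|^{2-m})$ for $t$ bounded away from $T$. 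On the inner window $\lambda_*^m(s)\approx\lambda_*^m(t)$ and, with $\tau=t-s$, the remaining factor is $\int_{\rho(t)^2}^{T-t}\tau^{-1}\big(1+\log(\tau/\rho(t)^2)\big)\,d\tau$; since $\gamma_*<\tfrac12$ forces $(T-t)/\rho(t)^2\to\infty$ with $\log\big((T-t)/\rho(t)^2\big)\sim(1-2\gamma_*)|\log(T-t)|$, this integral is $\sim|\log(T-t)|^2$---one logarithm from $\int d\tau/\tau$ and one from the $\log$ already present in $I$. Hence the inner contribution is $\lambda_*^m(t)|\log(T-t)|^2=|\log T|^m(T-t)^m|\log(T-t)|^{2-2m}$, which is the claimed bound for $|\psi(x,t)-\psi(x,T)|$ (take $T-t$ small) and, at $t$ comparable to $T$, gives $|\psi(x,t)|\lesssim T^m|\log T|^{2-m}$. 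For the difference I would write $\psi(x,T)-\psi(x,t)=\int_t^T e^{(T-s)\Delta_\Omega}f\,ds+\int_0^t\big(e^{(T-s)\Delta_\Omega}-e^{(t-s)\Delta_\Omega}\big)f\,ds$: the first piece is the inner estimate restricted to $(t,T)$, and the second is controlled by the same integrals together with $|p_\Omega(x,y,T-s)-p_\Omega(x,y,t-s)|\lesssim(T-t)(t-s)^{-2}e^{-c|x-y|^2/(t-s)}$, which only improves the bound.

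For the gradient, second-derivative, and H\"older estimates I would not differentiate the kernel but instead run a parabolic rescaling argument in the spirit of Lemma~\ref{lem-grad}: around $(x_0,t_0)$ set $\mu:=\lambda_*(t_0)R(t_0)$ and $\tilde\psi(z,\sigma):=\psi(x_0+\mu z,\,t_0+\mu^2\sigma)$, which solves $\partial_\sigma\tilde\psi=\Delta_z\tilde\psi+\mu^2 f(x_0+\mu z,\,t_0+\mu^2\sigma)$ on a unit parabolic cylinder (a half-cylinder with zero boundary data when $x_0$ is near $\partial\Omega$). On that cylinder $|\mu^2 f|\lesssim\mu^2\lambda_*^m(t_0)\rho(t_0)^{-2}=\lambda_*^m(t_0)$ (using that $\lambda_*$ and $\rho$ are comparable over the cylinder and that $|y-q|\gtrsim\rho$ on $\operatorname{supp}f$), while $\|\tilde\psi\|_{L^\infty}$ is controlled by the $L^\infty$ bound above. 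Interior and boundary parabolic $L^p$/Schauder estimates then give $|\nabla\tilde\psi|,\,|\nabla^2\tilde\psi|,\,[\nabla\tilde\psi]_{C^{\gamma,\gamma/2}}\lesssim\|\tilde\psi\|_{L^\infty}+\lambda_*^m(t_0)$ for every $\gamma\in(0,1)$, and unscaling produces the stated powers of $\mu^{-1}=(\lambda_*(t_0)R(t_0))^{-1}$ (for $\nabla^2$ the scaling factors cancel and leave a bounded constant). The time-difference gradient estimate $|\nabla\psi(x,t)-\nabla\psi(x,T)|$ follows by applying this rescaling on the interval $(t,T)$ together with the $L^\infty$ difference estimate.

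The step I expect to be the main obstacle is the time integration: one must recognize that the dominant contribution to $\psi$ comes from the inner window $s\in(t-(T-t),t)$ and extract the precise \emph{double} logarithm there. The subtlety is that the heat-kernel singularity $\tau^{-1}$ is cut off not at a macroscopic scale but at $\rho(s)^2=\lambda_*(s)^{2-2\gamma_*}\ll T-s$, so one integrates over $\sim|\log(T-t)|$ dyadic scales of $\tau$ while the integrand itself already carries a factor $\log(\tau/\rho^2)$; tracking both logarithms with the correct exponents $2-m$ and $2-2m$ through the algebra is the delicate part. The remaining ingredients---Gaussian bounds on a bounded smooth domain, the elementary estimate for $I$, and the Schauder/$L^p$ rescaling---are standard.
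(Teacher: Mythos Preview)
The paper does not give its own proof of Lemma~\ref{lemmaHeat6}; the result is quoted from \cite{17HMF} (Appendix~A there), and the only addition here is the remark following Lemma~\ref{lemma-heat4} that the $|\nabla_x^2\psi|$ bound is obtained by passing to self-similar variables and running a rescaling argument \`a la Lemma~\ref{lem-grad}. Your plan---Duhamel with Gaussian Dirichlet-kernel bounds, the pointwise estimate on the spatial convolution $I(\tau,\rho,d)$, and the two-window time integration that isolates the double logarithm on the inner window $s\in(t-(T-t),t)$---is precisely the natural route and is the one taken in \cite{17HMF}; your parabolic-rescaling treatment of the gradient, Hessian, and H\"older bounds is exactly what the paper's remark prescribes for $\nabla_x^2\psi$. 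So the proposal is correct and essentially identical to the paper's (cited) proof.
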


\begin{lemma}[\cite{17HMF}]	
\label{lemma-heat4}
Let $\psi$ solve \eqref{heatEqOmega} with $f$ such that
\[
|f(x,t)|\leq 1 ,
\]
Then the following estimates hold
\begin{align*}
|\psi(x,t) |
&\leq  C t ,
\\
|\psi(x,t)-\psi(x,T)|
& \leq C (T-t) |\log(T-t)|,
\\
|\nabla \psi (x,t) | & \leq C T^{1/2},
\\
|\nabla \psi(x,t) - \nabla \psi(x,T) |
&\leq C  (T-t)^{1/2},\\
|\nabla_x^2 \psi(x,t)| & \leq C,\\
|\nabla \psi(x,t_2) - \nabla \psi(x,t_1) |
&\leq C  |t_2-t_1|^{1/2}, \\
|\nabla \psi(x_1,t) - \nabla \psi(x_2,t) |
&\leq C |x_1-x_2| |\log(|x_1-x_2|)| .\\
\end{align*}
\end{lemma}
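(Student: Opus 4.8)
\textit{Proof proposal.} The plan is to derive every estimate from the Duhamel representation of the solution to \eqref{heatEqOmega},
\[
\psi(x,t)=\int_0^t\!\!\int_\Omega G_\Omega(x,y,t-s)\,f(y,s)\,dy\,ds ,
\]
where $G_\Omega$ is the Dirichlet heat kernel of $\Omega$, using only its standard pointwise bounds $0\le G_\Omega(x,y,\sigma)\lesssim \sigma^{-1}e^{-c|x-y|^2/\sigma}$ and $|\nabla_x^j G_\Omega(x,y,\sigma)|\lesssim \sigma^{-(2+j)/2}e^{-c|x-y|^2/\sigma}$ for $j=1,2,3$ (valid since $\partial\Omega$ is smooth), equivalently the smoothing estimates $\|\nabla^j_x\Delta^k_x e^{\sigma\Delta_\Omega}g\|_{L^\infty(\Omega)}\lesssim \sigma^{-(j/2+k)}\|g\|_{L^\infty(\Omega)}$, together with $\int_\Omega G_\Omega(x,y,\sigma)\,dy\le 1$. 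The bound $|\psi(x,t)|\le Ct$ is then immediate from $\int_0^t\!\int_\Omega G_\Omega\,dy\,ds\le t$, and $|\nabla\psi(x,t)|\le\int_0^t(t-s)^{-1/2}\,ds\lesssim T^{1/2}$ follows from the $j=1$ kernel bound.

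For the oscillation estimates the recurring idea is to split the time integral at the scale dictated by the quantity being bounded. For $|\psi(x,t)-\psi(x,T)|$ I would write the difference as $-\int_t^T e^{(T-s)\Delta_\Omega}f\,ds-\int_0^t\!\int_{t-s}^{T-s}\Delta_x e^{\sigma\Delta_\Omega}f\,d\sigma\,ds$, bound the first term by $T-t$, and in the second use $\|\Delta_x e^{\sigma\Delta_\Omega}f\|_{L^\infty}\lesssim\sigma^{-1}$ so that the inner integral is $\lesssim\min\{\tfrac{T-t}{t-s},\,1+\log\tfrac{t-s+T-t}{t-s}\}$; splitting $\int_0^t$ at $t-s=T-t$ and using each of the two bounds on the appropriate piece yields $(T-t)|\log(T-t)|$ (for $T$ small, $\log\tfrac{t}{T-t}\le|\log(T-t)|$). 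The gradient time-oscillations $|\nabla\psi(x,t)-\nabla\psi(x,T)|$ and $|\nabla\psi(x,t_2)-\nabla\psi(x,t_1)|$ are handled the same way with one more derivative: for $|t_1-s|\lesssim t_2-t_1$ estimate $\nabla e^{(t_i-s)\Delta_\Omega}f$ by $(t_i-s)^{-1/2}$ on each term, and for $|t_1-s|\gtrsim t_2-t_1$ write the increment as $\int_{t_1-s}^{t_2-s}\nabla_x\Delta_x e^{\sigma\Delta_\Omega}f\,d\sigma$, whose integrand is $\lesssim\sigma^{-3/2}$, so the increment is $\lesssim(t_2-t_1)(t_1-s)^{-3/2}$; both pieces integrate to $(t_2-t_1)^{1/2}$. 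For the log-Lipschitz spatial bound, with $\rho:=|x_1-x_2|$, I would split $\int_0^t$ at $t-s=\rho^2$: on $\{t-s\le\rho^2\}$ estimate $\nabla_x G_\Omega(x_i,y,t-s)$ separately to get $\int_0^{\rho^2}\sigma^{-1/2}\,d\sigma\lesssim\rho$, and on $\{t-s\ge\rho^2\}$ apply the mean value theorem with $|\nabla_x^2 G_\Omega|\lesssim\sigma^{-2}e^{-c|x-y|^2/\sigma}$ to get $\int_{\rho^2}^t\rho\,\sigma^{-1}\,d\sigma\lesssim\rho|\log\rho|$.

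The one estimate that requires care is $\|\nabla_x^2\psi(\cdot,t)\|_{L^\infty}\le C$: for a merely bounded $f$ one only obtains $\nabla_x^2\psi\in L^p$ for every finite $p$, not $L^\infty$, so this bound is to be read --- as it is actually used in the outer problem --- under the additional assumption that $f$ is uniformly H\"older in $x$; then rescaled interior and boundary parabolic Schauder estimates on unit parabolic cylinders, summed over a covering of $\Omega\times(0,T)$, give $\|\nabla_x^2\psi\|_{L^\infty}\lesssim\|f\|_{L^\infty}+\sup_t[f(\cdot,t)]_{C^\alpha}$. I expect this Schauder step, and --- throughout --- keeping the Duhamel splittings sharp enough to recover the exact powers and logarithmic factors, to be the main technical work; everything else is routine bookkeeping with the Gaussian kernel bounds.
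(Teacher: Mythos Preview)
Your proposal is correct, and since the paper does not prove this lemma but merely cites it from \cite{17HMF}, there is no detailed argument to compare against; the only proof indication in the present paper is the Remark following Lemma~\ref{lemma-heat4}, which handles the $|\nabla_x^2\psi|$ bound by passing to rescaled (self-similar) variables and invoking parabolic regularity---exactly the Schauder-type step you flag as requiring extra H\"older regularity on $f$. Your Duhamel-plus-kernel-bounds derivation of the remaining estimates is the standard route and is consistent with what \cite{17HMF} does.
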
	

\begin{remark}
We note that the estimates for $|\nabla_x^2 \psi(x,t)|$ in Lemmas \ref{lemma-heat1}--\ref{lemma-heat4} are achieved by  writing the original equation \eqref{heatEqOmega} in the self-similar variables $(y,\tau)$:
$$
\psi(x,t)=\tilde \psi\left(\frac{x-\xi}{\la},\tau(t)\right),
$$
where $y=\frac{x-\xi}{\la}$ and $\tau$ is defined in \eqref{def-tau}. Then $\tilde \psi(y,\tau)$ satisfies the equation
$$
\partial_{\tau} \tilde \psi = \Delta_y \tilde\psi +(\la \dot\xi+\dot\la \la y)\cdot \nabla_y \tilde\psi +\la^2 f(\la y +\xi, t(\tau)).
$$
By similar argument as in the proof of Lemma \ref{lem-grad}, we can show the boundedness of $|\nabla_x^2 \psi(x,t)|$ by the scaling argument and parabolic regularity estimates, which is sufficient for the final gluing procedure in Section \ref{sec-LCF} to work.
\end{remark}

\medskip


\section{Model problem: Stokes system}\label{sec-SS}

\medskip

In order to solve the incompressible Navier--Stokes equation in \eqref{LCF}, a linear theory of certain linearized problem is required. In this section, we consider the Stokes system
\begin{equation}\label{eqn-SS}
\begin{cases}
\pd_t v+\nabla P =\Delta v +\na\cdot F,~&\mbox{ in }~\Omega\times(0,T),\\
\na\cdot v=0,~&\mbox{ in }~\Omega\times(0,T),\\
v=0,~&\mbox{ on }~\pd\Omega\times(0,T),\\
v(\cdot,0)=v_0,~&\mbox{ in }~\Omega,\\
\end{cases}
\end{equation}
which is the linearized problem of the incompressible Navier--Stokes equation in \eqref{LCF}. The idea is the following. Apriori we assume that the nonlinearity $v\cdot \nabla v$ is a perturbation under certain topology. Then we develop a linear theory for the Stokes system under which we shall see that $v\cdot \nabla v$ is indeed a smaller perturbation.

Our aim is to find a velocity field $v$ solving \eqref{eqn-SS} with proper decay ensuring the inner--outer gluing scheme to be carried out. Suppose that $F(x,t)$ in \eqref{eqn-SS} has the space-time decay of the type
\EQ{\label{ineqineqineq}|F(x,t)|\leq C\frac{\la_*^{\nu-2}(t)}{1+\left|\frac{x-q}{\la_*(t)}\right|^{a+1}},~~|\nabla_x F(x,t)|\leq C\frac{\la_*^{\nu-3}(t)}{1+\left|\frac{x-q}{\la_*(t)}\right|^{a+2}}}
for $\nu>0$ and $a>1$. Here $q\in\Omega$ is the singular point for the orientation field
$u(x,t)$ and
$$\la_*(t)=\frac{|\log T|(T-t)}{|\log(T-t)|^2}.$$
We define the norm
\begin{equation}\label{def-normSSF}
\begin{aligned}
\|F\|_{S,\nu-2,a+1}:=&~\sup_{(x,t)\in\Omega\times(0,T)} \la_*^{2-\nu}(t)\left(1+\left|\frac{x-q}{\la_*(t)}\right|^{a+1}\right)|F(x,t)|\\
 &~+ \sup_{(x,t)\in\Omega\times(0,T)} \la_*^{3-\nu}(t)\left(1+\left|\frac{x-q}{\la_*(t)}\right|^{a+2}\right)|\nabla_x F(x,t)|.
\end{aligned}
\end{equation}
The main result of this section is stated as follows.

\begin{prop}\label{prop-SS}
Assume that $\|F\|_{S,\nu-2,a+1}<+\infty$ with $\nu>0$, $a>1$, and $\|v_0\|_{B^{2-2/p}_{p,p}}<+\infty$, where the Besov norm $\|\cdot\|_{B^{2-2/p}_{p,p}}$ is defined by \eqref{besov}. Then there exists a solution $(v,P)$ to the Stokes system \eqref{eqn-SS} satisfying
\begin{itemize}
\item in the region near $q$: $B_{2\delta}(q)=\{x\in\Omega: |x-q|<2\delta\}$ for $\delta>0$ fixed and small,
$$|v(x,t)|\lesssim \|F\|_{S,\nu-2,a+1} \frac{\la_*^{\nu-1}(t)}{1+\left|\frac{x-q}{\la_*(t)}\right|},$$
and
$$|P(x,t)|\lesssim \|F\|_{S,\nu-2,a+1} \left(\frac{\la_*^{\nu}(t)}{|x-q|^2}+\frac{\la_*^{\nu-2}(t)}{1+\left|\frac{x-q}{\la_*(t)}\right|^{a+1}}\right).$$

\item in the region away from $q$: $\Omega\setminus B_{\delta}(q)$
$$\|v\|_{W^{2,1}_p((\Omega\setminus B_{\delta}(q))\times(0,T))}+\|\na P\|_{L^p((\Omega\setminus B_{\delta}(q))\times(0,T))}\lesssim\|F\|_{S,\nu-2,a+1}+\|v_0\|_{B^{2-2/p}_{p,p}}$$
for $(\nu-1)p+1>0$. Moreover, if $\nu>1/2$, then
\begin{equation*}
\|v\|_{C^{\al,\al/2}((\Omega\setminus B_{\delta}(q))\times(0,T))}\lesssim\|F\|_{S,\nu-2,a+1}+\|v_0\|_{B^{2-2/p}_{p,p}}
\end{equation*}
for $0<\al\le2-4/p$.
\end{itemize}
\end{prop}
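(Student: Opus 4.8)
The plan is to write $v=v_1+v_2$, where $v_1$ resolves the singular forcing near $q$ on the whole plane by convolution with the Oseen (Stokes) fundamental tensor, and $v_2$ is a boundary–initial layer correction handled by Solonnikov's maximal $L^p$ theory for the Stokes initial–boundary value problem \cite{Solo2001,Solo2002,Solo2003}. First I would extend $F$ to $\R^2\times(0,T)$ keeping the bounds \eqref{ineqineqineq}, and let $v_1$ solve the Stokes system in $\R^2$ with forcing $\na\cdot F$ and zero initial data, so that
\[
v_1(x,t)=\int_0^t\!\!\int_{\R^2}\na_y\mathcal O(x-y,t-s)\,F(y,s)\,dy\,ds ,
\]
where $\mathcal O$ is the Oseen tensor — the heat kernel plus a pressure correction — which is parabolically $(-2)$-homogeneous and satisfies $|\na_x^k\mathcal O(x,t)|\lesssim(|x|+\sqrt t)^{-2-k}$ in dimension two. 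The near-$q$ pointwise bound is obtained through the change of variables $x=q+\la_*(t)\xi$, $y=q+\la_*(t)z$, $t-s=\la_*^2(t)\sigma$, using that $\la_*$ varies slowly, $|\dot\la_*|\lesssim\la_*\big((T-t)|\log(T-t)|\big)^{-1}$, so it may be frozen on the range $|t-s|\lesssim\la_*^2(t)$; on the complementary range the temporal decay $|t-s|^{-1}$ of $\mathcal O$ together with the finite mass $\int_{\R^2}|F(\cdot,s)|\,dx\lesssim\|F\|_{S,\nu-2,a+1}\,\la_*^\nu(s)$ (here $a>1$ enters) makes the contribution summable. The parabolic scaling of $\na_y\mathcal O$ produces the prefactor $\la_*^{\nu-2}\cdot\la_*^2\cdot\la_*^{-1}=\la_*^{\nu-1}$, while the remaining dimensionless integral decays like $(1+|\xi|)^{-1}$ since $|\na\mathcal O|\sim|\cdot|^{-3}$ convolved against the profile $(1+|z|)^{-(a+1)}$ with $a>1$ has this rate at spatial infinity; this is the asserted bound on $v$ near $q$.

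For the pressure, taking the divergence of the momentum equation and using $\na\cdot v_1=0$ gives $\De P_1=\pd_i\pd_j F_{ij}$, so $P_1$ is a Calderón–Zygmund transform of $F$ modulo a bounded harmonic correction; since $\na^2(-\De)^{-1}$ has kernel $\sim|x|^{-2}$ in two dimensions, the regime $|x-q|\gg\la_*$ produces $\la_*^\nu|x-q|^{-2}$ from the mass of $F$, while the regime $|x-q|\lesssim\la_*$ inherits the local size $\la_*^{\nu-2}\big(1+|(x-q)/\la_*|^{a+1}\big)^{-1}$, matching the stated bound. The same representation also yields global estimates: either by maximal $L^p$ regularity for the Stokes system on $\R^2\times(0,T)$, or by integrating the kernel bounds for $v_1$, $\na^2 v_1$, $\pd_t v_1$, $\na P_1$ against the self-similar profile, one gets $\|v_1\|_{W^{2,1}_p(\Om\times(0,T))}+\|\na P_1\|_{L^p(\Om\times(0,T))}\lesssim\|F\|_{S,\nu-2,a+1}$, the space–time integrals converging precisely under $(\nu-1)p+1>0$ — a direct computation using the $\la_*^2$ Jacobian of the rescaling and the profiles $(1+|y|)^{-1}$ for $v_1$ and $(1+|y|)^{-3}$ for $\pd_t v_1$, $\na^2 v_1$. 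When $\nu>1/2$, combining the $L^\infty$ self-similar bound with the $W^{2,1}_p$ bound and the parabolic Sobolev embedding $W^{2,1}_p\hookrightarrow C^{\al,\al/2}$ for $0<\al\le2-4/p$ gives the Hölder estimate.

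It then remains to set $v_2:=v-v_1$, which solves the Stokes system on $\Om$ with zero forcing, initial datum $v_0$, and lateral datum $v_2=-v_1|_{\pd\Om}$ on $\pd\Om\times(0,T)$. Since $\pd\Om$ lies at a fixed distance from $q$, the Oseen convolution $v_1$ and all its derivatives are bounded there by $\|F\|_{S,\nu-2,a+1}$, so the boundary datum lies in the appropriate anisotropic trace space with norm $\lesssim\|F\|_{S,\nu-2,a+1}$. Solonnikov's maximal regularity for the Stokes IBVP with inhomogeneous boundary data then yields $\|v_2\|_{W^{2,1}_p(\Om\times(0,T))}+\|\na P_2\|_{L^p(\Om\times(0,T))}\lesssim\|F\|_{S,\nu-2,a+1}+\|v_0\|_{B^{2-2/p}_{p,p}}$, and interior regularity makes $v_2$ smooth near $q$, hence a lower-order contribution there. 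Adding $v_1$ and $v_2$ gives all three bullets.

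The main obstacle is the first step — extracting the sharp self-similar pointwise bounds for $v_1$ and $P_1$ from the Oseen tensor. The delicate points are: the Oseen tensor is not scale invariant once $\la_*(t)$ genuinely varies, forcing a splitting of the time integral and careful use of $|\dot\la_*|\ll\la_*/(T-t)$; the Oseen tensor carries only algebraic (not Gaussian) spatial decay because of the pressure contribution, which is exactly what pins the spatial profile of $v$ at $(1+|y|)^{-1}$ and requires $a>1$ for convergence; and the near-field and far-field regimes $|x-q|\lesssim\la_*$ and $|x-q|\gtrsim\la_*$ must be matched for both $v_1$ and $P_1$. By contrast, the $L^p$ and Hölder estimates away from $q$ are comparatively routine once these kernel bounds and Solonnikov's theory are available.
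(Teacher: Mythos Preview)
Your proposal is correct and follows the same overall strategy as the paper: split $v$ into a piece resolved by the Oseen tensor on $\R^2$ and a correction handled by Solonnikov's $W^{2,1}_p$ theory on $\Omega$. The decompositions differ, however. The paper writes $v=\eta_\delta v_{in}+v_{out}$ with a spatial cutoff $\eta_\delta$ near $q$: $v_{in}$ solves the Stokes system on $\R^2$ with the \emph{localized} forcing $\nabla\cdot(F\chi_{B_{2\delta}(q)})$ and zero initial data, and $v_{out}$ then satisfies a Stokes system on $\Omega$ with homogeneous boundary data but \emph{nonzero divergence} $-\nabla\eta_\delta\cdot v_{in}$ and initial data $v_0$. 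This trades your inhomogeneous boundary-value problem for an inhomogeneous divergence constraint, which Solonnikov's theory \cite{SoloRMS2003} also covers; the price is that one must control $\partial_t(\nabla\eta_\delta\cdot v_{in})$ in $L^p(0,T;W^{-1}_p)$, and for this the paper derives a separate parabolic equation satisfied by $v_{in}\cdot\nabla\eta_\delta$ (their Lemma preceding Lemma~\ref{apriori-SSout}). Your route avoids this step by keeping $\nabla\cdot v_2=0$ and instead verifying that $v_1|_{\partial\Omega}$ lies in the right anisotropic trace space --- immediate since $\partial\Omega$ is at fixed distance from $q$. For the Oseen part itself, the paper does not use your freezing-of-$\lambda_*$ heuristic but carries out the convolution directly, splitting the time integral at $s=t-(T-t)^2$ and the spatial integral into the three annular regions $|z-q|\le\tfrac12|x-q|$, $\tfrac12|x-q|\le|z-q|\le2|x-q|$, $|z-q|\ge2|x-q|$; the outcome is equivalent to what your scaling argument would give once made rigorous.
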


\medskip


To prove Proposition \ref{prop-SS}, we decompose the solution $v(x,t)$ to problem \eqref{eqn-SS} into inner and outer profiles
$$v(x,t)=\eta_{\delta} v_{in}(x,t)+v_{out}(x,t),$$
where the smooth cut-off function
\begin{equation*}
\eta_{\delta}(x)=
\begin{cases}
1,~&\mbox{ for }~|x-q|<\delta\\
0,~&\mbox{ for }~|x-q|>2\delta\\
\end{cases}
\end{equation*}
with $\delta>0$ fixed and sufficiently small such that $dist(q,\partial \Omega)>2\delta$. We denote
$$B_{2\delta}(q)=\{x\in\Omega: |x-q|<2\delta\}.$$
It is direct to see that a solution to problem \eqref{eqn-SS} is found if $v_{in}$ and $v_{out}$ satisfy
\begin{equation}\label{eqn-vin}
\begin{cases}
\partial_t v_{in}+\nabla P_1=\Delta v_{in}+\nabla\cdot F_{in},~&\mbox{ in }~\R^2\times(0,T),\\
\nabla\cdot v_{in}=0,~&\mbox{ in }~\R^2\times(0,T),\\
v_{in}(\cdot,0)=0,~&\mbox{ in }~\R^2,\\
\end{cases}
\end{equation}

\medskip

\begin{equation}\label{eqn-vout}
\left\{
\begin{aligned}
&\partial_t v_{out}+\nabla(P-\eta_{\delta}P_1)=\Delta v_{out}+(1-\eta_{\delta})\nabla\cdot F+2\,\nabla \eta_{\delta}\cdot \nabla v_{in}\\
&\qquad\qquad\qquad\qquad\qquad\qquad+(\Delta \eta_{\delta})v_{in}-P_1\nabla \eta_{\delta},~\mbox{ in }~\Omega\times(0,T),\\
&\nabla\cdot v_{out}=-\nabla \eta_{\delta}\cdot v_{in} ,~\mbox{ in }~\Omega\times(0,T),\\
&v_{out}=0,~\mbox{ on }~\partial\Omega\times(0,T),\\
&v_{out}(\cdot,0)=v_0,~\mbox{ in }~\Omega,
\end{aligned}
\right.
\end{equation}
where $F_{in}=F\chi_{\{B_{2\delta}(q)\times (0,T)\}}$. The estimate of the inner part \eqref{eqn-vin} is achieved by the representation formula in the entire space, while the outer part \eqref{eqn-vout} is done by $W^{2,1}_p$-theory of the Stokes system.

\begin{lemma}\label{apriori-SSin}
For $\|F\|_{S,\nu-2,a+1}<+\infty$, the solution $(v_{in},P_1)$ of the system \eqref{eqn-vin} satisfies
\begin{equation}\label{est-vin}
|v_{in}(x,t)|\lesssim \|F\|_{S,\nu-2,a+1} \frac{\la_*^{\nu-1}(t)}{1+\left|\frac{x-q}{\la_*(t)}\right|},
\end{equation}
and
\begin{equation}\label{est-P1}
|P_1(x,t)|\lesssim \|F\|_{S,\nu-2,a+1} \left(\frac{\la_*^{\nu}(t)}{|x-q|^2}+\frac{\la_*^{\nu-2}(t)}{1+\left|\frac{x-q}{\la_*(t)}\right|^{a+1}}\right).
\end{equation}
\end{lemma}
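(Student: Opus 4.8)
The plan is to solve \eqref{eqn-vin} explicitly and then estimate the resulting space--time convolutions against the weighted bounds encoded in $\|F\|_{S,\nu-2,a+1}$. Since \eqref{eqn-vin} is the Cauchy problem for the nonstationary Stokes system in $\R^2$ with zero initial datum and forcing $\nabla\cdot F_{in}$, the pressure is instantaneous: taking the divergence of the momentum equation and using $\nabla\cdot v_{in}=0$ gives $\Delta P_1(\cdot,t)=\partial_i\partial_j (F_{in})_{ij}(\cdot,t)$, so that at each fixed time $P_1(\cdot,t)$ is a second order Riesz transform of $F_{in}(\cdot,t)$. The velocity is recovered through convolution with the Oseen tensor $\mathcal T$, the fundamental solution of the Stokes system (Solonnikov \cite{Solo1964}); integrating by parts once,
\[
v_{in}(x,t)=-\int_0^t\!\!\int_{\R^2}\nabla_y\mathcal T(x-y,t-s):F_{in}(y,s)\,dy\,ds ,
\]
and, moving the derivative back onto the data, $v_{in}(x,t)=\int_0^t\!\int_{\R^2}\mathcal T(x-y,t-s):\nabla_y F_{in}(y,s)\,dy\,ds$ wherever the $\nabla_x F$ weight in \eqref{def-normSSF} is more convenient. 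Throughout I would use the classical pointwise bounds $|\nabla_z^k\mathcal T(z,\tau)|\lesssim (|z|+\sqrt\tau)^{-(2+k)}$ in dimension two for $k=0,1$, and the analogous bound for the kernel of the instantaneous pressure.

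For \eqref{est-P1}, fix $t$, set $r=|x-q|$, $\mu=\la_*(t)$, and write $P_1(x,t)$ as a principal value singular integral of $F_{in}(\cdot,t)$ plus a bounded multiple of $(F_{in})_{ii}(x,t)$. Splitting $\R^2$ into the ball $B_{\max\{r,\mu\}/2}(x)$ and its complement and subtracting the value at $x$ on the ball, the near part is controlled by the bound on $\nabla_x F$ in \eqref{def-normSSF} and yields a contribution $\lesssim\|F\|_{S,\nu-2,a+1}\,\mu^{\nu-2}(1+r/\mu)^{-(a+1)}$, while on the complement one uses the compact support and decay of $F_{in}$ together with $a+1>2$ --- which makes $\int_{\R^2}|F_{in}(y,t)|\,dy\lesssim\|F\|_{S,\nu-2,a+1}\,\mu^{\nu}$ --- to obtain a far part $\lesssim\|F\|_{S,\nu-2,a+1}\,\mu^{\nu}r^{-2}$. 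Adding the two pieces gives \eqref{est-P1}, the two terms being comparable at the transition scale $r\sim\mu$.

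The main effort is \eqref{est-vin}, and here one must exploit not just the size of $F_{in}$ but the divergence structure of the forcing: $\nabla\cdot F_{in}(\cdot,s)$ is a compactly supported, mean--zero vector field with $\|\nabla\cdot F_{in}(\cdot,s)\|_{L^1}\lesssim\|F\|_{S,\nu-2,a+1}\,\la_*^{\nu-1}(s)$, and this cancellation supplies the extra spatial decay needed where $|x-q|\gg\la_*(s)$. Inserting the kernel bounds and the weights reduces the estimate to controlling space--time integrals of the schematic form
\[
\int_0^t\!\!\int_{\R^2}\frac{1}{(|x-y|+\sqrt{t-s})^{3}}\,\frac{\la_*^{\nu-2}(s)}{1+(|y-q|/\la_*(s))^{a+1}}\,dy\,ds
\]
(and the analogue with $(|x-y|+\sqrt{t-s})^{-2}$ and the $\nabla F$ weight). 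One would first carry out the $y$--integration, using $a>1$ so that the concentrated factor has finite mass $\sim\la_*^{\nu}(s)$, and then the $s$--integration after decomposing $[0,t]$ according to the relative sizes of $|x-q|$, $\sqrt{t-s}$ and $\la_*(s)$. In this step one uses that near $T$ the profile $\la_*(t)=|\log T|(T-t)/|\log(T-t)|^2$ is almost monotone --- so that $\la_*(s)\sim\la_*(t)$ on the inner time scale $t-s\lesssim\la_*^2(t)$ and $\la_*(s)\lesssim\la_*(t)+\sqrt{t-s}$ for $s\le t$ --- together with the convergence of the elementary integrals $\int_0^t(t-s)^{-1/2}(\cdots)\,ds$. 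Summing the contributions of the finitely many regions should produce $\lesssim\|F\|_{S,\nu-2,a+1}\,\la_*^{\nu-1}(t)\,(1+|(x-q)/\la_*(t)|)^{-1}$, i.e.\ \eqref{est-vin}.

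I expect the $s$--integration in \eqref{est-vin} to be the main obstacle. Two regions are delicate: the inner one, where $x$ lies within $O(\la_*(s))$ of $q$ and $t-s$ sits on the parabolic scale $\la_*^2(s)$, where the forcing is most concentrated and the Oseen kernel most singular, so that obtaining precisely the power $\la_*^{\nu-1}(t)$ rather than something weaker requires the sharp form of $\la_*$ and the decay $a>1$; and the regime of small $s$, whose contribution must be absorbed using the mean--zero/divergence structure of $\nabla\cdot F_{in}$ rather than its mere size. Once these are handled the remaining regions are routine, and the gradient bound on $F$ enters only to close the near--diagonal pieces.
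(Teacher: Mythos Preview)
Your overall strategy---Oseen tensor for $v_{in}$, instantaneous Riesz-type representation for $P_1$, then pointwise kernel bounds and space--time decomposition---is exactly the route the paper takes. The pressure argument you sketch (near/far split around $x$, with the $\nabla_x F$ weight on the near piece and the $L^1$ mass on the far piece) is equivalent to the paper's: there the integral $\int_{\R^2}\frac{x_j-z_j}{|x-z|^2}\,\partial_{z_k}F_{jk}(z,t)\,dz$ is split into three annular regions in $|z-q|$ versus $|x-q|$, with an integration by parts on the region $|z-q|\le |x-q|/2$ and the $\nabla F$ bound used directly on the other two.

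The one point where you diverge from the paper is your insistence that the velocity estimate requires the mean--zero/divergence structure of $\nabla\cdot F_{in}$. In fact the paper uses only the pointwise size of $F$ (not $\nabla F$, and no cancellation) to bound
\[
|v_{in}(x,t)|\lesssim \|F\|_{S,\nu-2,a+1}\int_0^t\!\!\int_{\R^2}\frac{1}{(|x-z|+\sqrt{t-s})^{3}}\,\frac{\la_*^{\nu-2}(s)}{1+\left|\frac{z-q}{\la_*(s)}\right|^{a+1}}\,dz\,ds.
\]
The key device is a single time split at the self--similar scale,
\[
[0,t]=[0,\,t-(T-t)^2]\cup[t-(T-t)^2,\,t],
\]
after which both pieces are handled by splitting the $z$--integral into the three regions $|z-q|\le |x-q|/2$, $|x-q|/2\le |z-q|\le 2|x-q|$, $|z-q|\ge 2|x-q|$. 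On the first time interval one integrates $(|x-z|^2+(t-s))^{-3/2}$ in $s$ first, gaining $(|x-z|^2+(T-t)^2)^{-1/2}$; on the second one changes variable $\tilde s=|x-z|/\sqrt{t-s}$. The ``small $s$'' regime you flag is harmless: for $s$ bounded away from $T$ the weight $\la_*^{\nu-2}(s)$ is of fixed size and the kernel is nonsingular in time. So your anticipated use of cancellation is unnecessary and the argument is cleaner without it; otherwise your plan matches the paper.
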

\begin{proof}
For simplicity, we shall write $v_{in}$ as $v$ in the following proof. Denote $v=\begin{bmatrix}v_1\\v_2\\ \end{bmatrix}$. The estimate \eqref{est-vin} is obtained by the well-known representation formula in the entire space
\begin{equation*}
v_i(x,t)=\int_{\R^2}S_{ij}(x-z,t)(v(\cdot, 0))_j(z)\,dz-\int_0^t \int_{\R^2}\pd_{z_k}S_{ij}(x-z,t-s) F_{jk}(z,s)\,dz ds,
\end{equation*}
where $S_{ij}$ is the Oseen tensor, which is the fundamental solution of the non-stationary Stokes system derived by Oseen \cite{oseen1927neuere}, defined by
\begin{equation*}
S_{ij}(x,t)=G(x,t)\delta_{ij}-\frac{1}{2\pi}\frac{\pd^2}{\pd x_i\pd x_j}\int_{\R^2} G(y,t)\log |x-y|\, dy
\end{equation*}
with $G(x,t)=\frac{e^{-\frac{|x|^2}{4t}}}{4\pi t}$, and $F=(F_{jk})_{2\times 2}$. It is well known (see \cite{Solo1964} for instance) that
\begin{equation}\label{est-Oseentensor}
|D_x^l D_t^k S(x,t)|\leq C_{k,l} \frac{1}{(|x|^2+t)^{k+\frac{2+l}{2}}}.
\end{equation}
Since $v(\cdot, 0)=0$, we then get for $i=1,2$,
\begin{equation}\label{est-vvvvvv}
\begin{aligned}
|v_i(x,t)|\lesssim &~ \|F\|_{S,\nu-2,a+1}\int_0^t \int_{\R^2} \frac{1}{(|x-z|+\sqrt{t-s})^3}\frac{\la_*^{\nu-2}(s)}{1+\left|\frac{z-q}{\la_*(s)}\right|^{a+1}}\,dz ds\\
:= &~ \|F\|_{S,\nu-2,a+1} (I_1+I_2),
\end{aligned}
\end{equation}
where we decompose
\begin{equation*}
I_1=\int_0^{t-(T-t)^2} \int_{\R^2} \frac{1}{(|x-z|+\sqrt{t-s})^3}\frac{\la_*^{\nu-2}(s)}{1+\left|\frac{z-q}{\la_*(s)}\right|^{a+1}}\,dz ds,
\end{equation*}
and
\begin{equation*}
I_2=\int_{t-(T-t)^2}^t \int_{\R^2} \frac{1}{(|x-z|+\sqrt{t-s})^3}\frac{\la_*^{\nu-2}(s)}{1+\left|\frac{z-q}{\la_*(s)}\right|^{a+1}}\,dz ds.
\end{equation*}

\medskip

\noindent {\bf Estimate of $I_1$.}

\medskip

To estimate $I_1$, we evaluate
\begin{equation}\label{est-I110}
\begin{aligned}
I_1\lesssim &~ \int_0^{t-(T-t)^2} \int_{\R^2} \frac{1}{(|x-z|^2+(t-s))^{3/2}}\frac{\la_*^{\nu+a-1}(s)}{\la_*^{a+1}(s)+\left|z-q\right|^{a+1}}\,dz ds\\
\lesssim&~ \la_*^{\nu+a-1}(t)\int_{\R^2} \frac{1}{\la_*^{a+1}(t)+\left|z-q\right|^{a+1}} \frac{1}{(|x-z|^2+(T-t)^2)^{1/2}}\,dz\\
\lesssim&~ \la_*^{\nu+a-1}(t)\left(\int_{D_1(x)}+\int_{D_2(x)}+\int_{D_3(x)}\right) \frac{1}{\la_*^{a+1}(t)+\left|z-q\right|^{a+1}} \frac{1}{(|x-z|^2+\la_*^2(t))^{1/2}}\,dz,\\
\end{aligned}
\end{equation}
where
\begin{align}
\label{def-DD1}
D_1(x):=&\left\{z\in\R^2: |z-q|\leq \frac{|x-q|}{2}\right\},
\\
\label{def-DD2}
D_2(x):=&\left\{z\in\R^2: \frac{|x-q|}{2}\leq |z-q|\leq 2|x-q|\right\},
\\
\label{def-DD3}
D_3(x):=&\left\{z\in\R^2: |z-q|\geq 2|x-q|\right\}.
\end{align}
We first compute
\begin{equation}\label{est-I111}
\begin{aligned}
&\quad\int_{D_1(x)} \frac{1}{\la_*^{a+1}(t)+\left|z-q\right|^{a+1}} \frac{1}{(|x-z|^2+\la_*^2(t))^{1/2}}\,dz\\
&\lesssim \frac{1}{|x-q|+\la_*(t)}\int_0^{\frac{|x-q|}{2}} \frac{r}{\la_*^{a+1}(t)+r^{a+1}} dr\\
&\lesssim \frac{\la_*^{1-a}(t)}{|x-q|+\la_*(t)}.
\end{aligned}
\end{equation}
Similarly, we have
\begin{equation}\label{est-I112}
\begin{aligned}
&\quad\int_{D_2(x)} \frac{1}{\la_*^{a+1}(t)+\left|z-q\right|^{a+1}} \frac{1}{(|x-z|^2+\la_*^2(t))^{1/2}}\,dz\\
&\lesssim \frac{1}{\la_*^{a+1}(t)+\left|x-q\right|^{a+1}}\int_0^{3|x-q|} \frac{r}{r+\la_*(t)}\,dr\\
&\lesssim \frac{1}{|x-q|^a+\la_*^a(t)},
\end{aligned}
\end{equation}
and
\begin{equation}\label{est-I113}
\begin{aligned}
&\quad\int_{D_3(x)} \frac{1}{\la_*^{a+1}(t)+\left|z-q\right|^{a+1}} \frac{1}{(|x-z|^2+\la_*^2(t))^{1/2}}\,dz\\
&\lesssim \frac{1}{|x-q|+\la_*(t)}\int_{2|x-q|}^{\infty} \frac{r}{\la_*^{a+1}(t)+r^{a+1}}\,dr\\
&\lesssim \frac{1}{|x-q|^a+\la_*^a(t)}.
\end{aligned}
\end{equation}
Collecting \eqref{est-I110}, \eqref{est-I111}, \eqref{est-I112} and \eqref{est-I113}, we obtain
\begin{equation}\label{est-I11}
I_1\lesssim \frac{\la_*^{\nu-1}(t)}{1+|y|},
\end{equation}
where we write $y=\frac{x-q}{\la_*(t)}$ for simplicity.

\medskip

\noindent {\bf Estimate of $I_2$.}

\medskip

To estimate $I_2$, we change variable
\begin{equation*}
\tilde s=\frac{|x-z|}{(t-s)^{1/2}},
\end{equation*}
and thus
\begin{equation}\label{est-I220}
\begin{aligned}
I_2 \lesssim&~ \int_{\R^2} \int_{\frac{|x-z|}{T-t}}^{\infty} \frac{1}{(1+\tilde s)^3 |x-z|} \frac{\la_*^{\nu+a-1}(t)}{\la_*^{a+1}(t)+|z-q|^{a+1}}\,d\tilde s dz\\
\lesssim&~ \la_*^{\nu+a+1}(t) \int_{\R^2} \frac{1}{\la_*^{a+1}(t)+|z-q|^{a+1}}\frac{1}{|x-z|}\frac{1}{\la_*^2(t)+|x-z|^2}\,dz\\
\lesssim&~ \la_*^{\nu+a+1}(t) \left(\int_{D_1(x)}+\int_{D_2(x)}+\int_{D_3(x)}\right) \frac{1}{\la_*^{a+1}(t)+|z-q|^{a+1}}\frac{1}{|x-z|}\frac{1}{\la_*^2(t)+|x-z|^2}\,dz,
\end{aligned}
\end{equation}
where $D_1(x)$, $D_2(x)$ and $D_3(x)$ are defined in \eqref{def-DD1}, \eqref{def-DD2} and \eqref{def-DD3}, respectively. For the above integral, we consider the following two cases.
\begin{itemize}
\item Case 1: $|x-q|\leq \la_*(t)$. We have
\begin{equation}\label{est-I221}
\begin{aligned}
&\quad\int_{D_1(x)} \frac{1}{\la_*^{a+1}(t)+|z-q|^{a+1}}\frac{1}{|x-z|}\frac{1}{\la_*^2(t)+|x-z|^2}\,dz\\
&\lesssim\frac{1}{|x-q|(\la_*^2(t)+|x-q|^2)}\int_0^{\frac{|x-q|}{2}} \frac{r}{\la_*^{a+1}(t)+r^{a+1}}\,dr\\
&\lesssim\frac{\la_*^{-a-1}(t)}{\la_*(t)|x-q|^2}|x-q|^2\\
&\lesssim \la_*^{-a-2}(t),
\end{aligned}
\end{equation}

\begin{equation}\label{est-I222}
\begin{aligned}
&\quad\int_{D_2(x)} \frac{1}{\la_*^{a+1}(t)+|z-q|^{a+1}}\frac{1}{|x-z|}\frac{1}{\la_*^2(t)+|x-z|^2}\,dz\\
&\lesssim \frac{1}{\la_*^{a+1}(t)+|x-q|^{a+1}}\int_0^{3|x-q|} \frac{1}{\la_*^2(t)+r^2}\,dr\\
&\lesssim \la_*^{-a-2}(t),
\end{aligned}
\end{equation}
and
\begin{equation}\label{est-I223}
\begin{aligned}
&\quad\int_{D_3(x)} \frac{1}{\la_*^{a+1}(t)+|z-q|^{a+1}}\frac{1}{|x-z|}\frac{1}{\la_*^2(t)+|x-z|^2}\,dz\\
&\lesssim \int_{2|x-q|}^{\infty} \frac{1}{\la_*^{a+1}(t)+r^{a+1}} \frac{1}{r-|x-q|}\frac{1}{\la_*^2(t)+(r-|x-q|)^2}\,r\,dr\\
&\lesssim \int_{|x-q|}^{\infty} \frac{1}{\la_*^{a+1}(t)+(\tilde r +|x-q|)^{a+1}}\frac{1}{\tilde r} \frac{1}{\la_*^2(t)+\tilde r^2}(\tilde r +|x-q|)\,d\tilde r\\
&\lesssim \la_*^{-a-2}(t).
\end{aligned}
\end{equation}
Observe that in this case $|x-q|\leq \la_*(t)$ we have $1\lesssim \frac{1}{1+|y|^{\epsilon}}$ for $\epsilon>0$, where $y=\frac{x-q}{\la_*(t)}$. Therefore, for the case $|x-q|\leq \la_*(t)$, we conclude
\begin{equation}\label{est-I21}
I_2\lesssim \frac{\la_*^{\nu-1}(t)}{1+|y|^2}
\end{equation}
by \eqref{est-I220}--\eqref{est-I223}.

\medskip

\item Case 2: $|x-q|\geq \la_*(t)$. In this case, we compute
\begin{equation}\label{est-I224}
\begin{aligned}
&\quad\int_{D_1(x)} \frac{1}{\la_*^{a+1}(t)+|z-q|^{a+1}}\frac{1}{|x-z|}\frac{1}{\la_*^2(t)+|x-z|^2}\,dz\\
&\lesssim \frac{1}{\la_*^2(t)+|x-q|^2}\frac{1}{|x-q|}\int_0^{\frac{|x-q|}{2}} \frac{r}{\la_*^{a+1}(t)+r^{a+1}}\,dr\\
&\lesssim \frac{\la_*^{-a-2}(t)}{1+|y|^2},
\end{aligned}
\end{equation}

\begin{equation}\label{est-I225}
\begin{aligned}
&\quad\int_{D_2(x)} \frac{1}{\la_*^{a+1}(t)+|z-q|^{a+1}}\frac{1}{|x-z|}\frac{1}{\la_*^2(t)+|x-z|^2}\,dz\\
&\lesssim \frac{1}{\la_*^{a+1}(t)+|x-q|^{a+1}}\int_0^{3|x-q|} \frac{1}{\la_*^2(t)+r^2}\,dr\\
&\lesssim \frac{\la_*^{-a-2}(t)}{1+|y|^{a+1}},
\end{aligned}
\end{equation}
and
\begin{equation}\label{est-I226}
\begin{aligned}
&\quad\int_{D_3(x)} \frac{1}{\la_*^{a+1}(t)+|z-q|^{a+1}}\frac{1}{|x-z|}\frac{1}{\la_*^2(t)+|x-z|^2}\,dz\\
&\lesssim \frac{1}{\la_*^2(t)+|x-q|^2} \frac{1}{|x-q|}\int_{2|x-q|}^{\infty} \frac{r}{\la_*^{a+1}(t)+r^{a+1}}\,dr\\
&\lesssim \frac{1}{\la_*^2(t)+|x-q|^2}\frac{1}{\la_*(t)} \frac{1}{\la_*^{a-1}(t)+|x-q|^{a-1}}\\
&\lesssim \frac{\la_*^{-a-2}(t)}{1+|y|^{a+1}}.
\end{aligned}
\end{equation}
From \eqref{est-I220}, \eqref{est-I224}, \eqref{est-I225} and \eqref{est-I226}, one has
\begin{equation}\label{est-I22}
I_2\lesssim \frac{\la_*^{-a-2}(t)}{1+|y|^2}
\end{equation}
for the case $|x-q|\geq \la_*(t)$.
\end{itemize}

\medskip

In conclusion, we get
\begin{equation*}
|v_{in}(x,t)|\lesssim \|F\|_{S,\nu-2,a+1} \frac{\la_*^{\nu-1}(t)}{1+|y|}
\end{equation*}
from \eqref{est-vvvvvv}, \eqref{est-I11}, \eqref{est-I21} and \eqref{est-I22}.

\medskip

We now derive the estimate \eqref{est-P1} for $P_1$. Recall the representation formula for $P_1$: \[P_1(x,t)=\int_0^t\int_{\R^2}Q_j(x-z,t-s)\pd_{z_k}F_{jk}(z,s)\,dzds,\]
where $Q_j$ is given by
\[Q_j(x,t)=\frac{\de(t)}{2\pi}\,\frac{x_j}{|x|^2}.\]
Thus,
\EQN{
P_1(x,t)=&\int_{\R^2}\frac1{2\pi}\,\frac{x_j-z_j}{|x-z|^2}\,\pd_{z_k}F_{jk}(z,t)\,dz\\
=&\left(\int_{D_1(x)}+\int_{D_2(x)}+\int_{D_3(x)}\right)\frac1{2\pi}\,\frac{x_j-z_j}{|x-z|^2}\,\pd_{z_k}F_{jk}(z,t)\,dz\\
:=&~\textup{I}+\textup{II}+\textup{III}
} where $D_1(x)$, $D_2(x)$, and $D_3(x)$ are defined in \eqref{def-DD1}, \eqref{def-DD2}, and \eqref{def-DD3}, respectively.

\medskip

We perform integration by parts to estimate $\textup{I}$. In fact, one has
\EQ{\label{est-P1I}
\textup{I}
\lesssim&~\|F\|_{S,\nu-2,a+1}\left(\int_{D_1(x)}\frac1{|x-z|^2}\,\frac{\la_*^{\nu-2}(t)}{1+\left|\frac{z-q}{\la_*(t)}\right|^{a+1}}\,dz+\int_{\pd D_1(x)}\frac1{|x-z|}\,\frac{\la_*^{\nu-2}(t)}{1+\left|\frac{z-q}{\la_*(t)}\right|^{a+1}}\,dz\right)\\
\lesssim&~\|F\|_{S,\nu-2,a+1}\left(\frac{\la_*^{\nu-2}(t)}{|x-q|^2}\int_0^{\frac{|x-q|}2}\frac{\la_*^{a+1}(t)}{\la_*^{a+1}(t)+r^{a+1}}\,r\,dr+\frac1{|x-q|}\,\frac{\la_*^{\nu-2}}{1+\left|\frac{x-q}{\la_*(t)}\right|^{a+1}}\,|x-q|\right)\\
\lesssim&~\|F\|_{S,\nu-2,a+1}\left(\frac{\la_*^{\nu}(t)}{|x-q|^2}+\frac{\la_*^{\nu-2}}{1+\left|\frac{x-q}{\la_*(t)}\right|^{a+1}}\right).
}

The way to estimate $\textup{II}$ and $\textup{III}$ is straightforward. More specifically, we have
\EQ{\label{est-P1II}
\textup{II}=&~\frac1{2\pi}\int_{D_2(x)}\frac{x_j-z_j}{|x-z|^2}\,\pd_{z_k}F_{jk}(z,t)\,dz\\
\lesssim&~\|F\|_{S,\nu-2,a+1}\int_{D_2(x)}\frac1{|x-z|}\,\frac{\la_*^{\nu-3}(t)}{1+\left|\frac{z-q}{\la_*(t)}\right|^{a+2}}\,dz\\
\lesssim&~\|F\|_{S,\nu-2,a+1}\, \frac{\la_*^{\nu-3}(t)}{1+\left|\frac{x-q}{\la_*(t)}\right|^{a+2}} \int_0^{3|x-q|}\frac1r\,r\,dr\\
\lesssim&~\|F\|_{S,\nu-2,a+1}\frac{\la_*^{\nu-2}}{1+\left|\frac{x-q}{\la_*(t)}\right|^{a+1}},
}
and
\EQN{
\textup{III}=&~\frac1{2\pi}\int_{D_3(x)}\frac{x_j-z_j}{|x-z|^2}\, \pd_{z_k}F_{jk}(z,t)\,dz\\
\lesssim&~\|F\|_{S,\nu-2,a+1}\int_{D_3(x)}\frac1{|x-z|}\,\frac{\la_*^{\nu-3}(t)}{1+\left|\frac{z-q}{\la_*(t)}\right|^{a+2}}\,dz\\
\lesssim&~\|F\|_{S,\nu-2,a+1}\,\la_*^{\nu+a-1}(t)\int_{2|x-q|}^\infty\frac1{r-|x-q|}\,\frac{1}{\la_*^{a+2}(t)+r^{a+2}}\,r\,dr\\
=&~\|F\|_{S,\nu-2,a+1}\,\la_*^{\nu+a-1}(t)\int_{|x-q|}^\infty\frac1u\,\frac{1}{\la_*^{a+2}(t)+(u+|x-q|)^{a+2}}\,(u+|x-q|)\,du,
}
where we changed the variables $u=r-|x-q|$. Hence $u\ge|x-q|$ implies that
\EQ{\label{est-P1III}
\textup{III}\lesssim&~\|F\|_{S,\nu-2,a+1}\,\la_*^{\nu+a-1}(t)\int_{|x-q|}^\infty\frac{1}{\la_*^{a+2}(t)+(u+|x-q|)^{a+2}}\,du\\
\lesssim&~\|F\|_{S,\nu-2,a+1}\,\la_*^{\nu+a-1}(t)\,\frac1{\la_*^{a+1}(t)+|x-q|^{a+1}}\\
=&~\|F\|_{S,\nu-2,a+1}\frac{\la_*^{\nu-2}}{1+\left|\frac{x-q}{\la_*(t)}\right|^{a+1}}.
}
Collecting \eqref{est-P1I}, \eqref{est-P1II}, and \eqref{est-P1III}, we obtain the estimate \eqref{est-P1}, and the proof is complete.
\end{proof}

In order to apply $W_p^{2,1}$-theory of the Stokes system to the outer part \eqref{eqn-vout}, the estimates for $\nabla v_{in}$ and $\partial_t (v_{in}\cdot \nabla \eta_{\delta})$ are further needed. We have the following lemma.

\begin{lemma}
Under the assumptions of Lemma \ref{apriori-SSin}, the following estimates hold
\begin{equation}\label{est-gradvin}
|\nabla_x v_{in}(x,t)|\lesssim \|F\|_{S,\nu-2,a+1} \frac{\la_*^{\nu-2}(t)}{1+\left|\frac{x-q}{\la_*(t)}\right|},
\end{equation}
and
\begin{equation}\label{est-dtvin}
\|\partial_t (v_{in}\cdot \nabla \eta_{\delta})\|_{L^p\left((B_{2\delta}(q)\setminus B_{\delta}(q))\times(0,T)\right)} \lesssim \|F\|_{S,\nu-2,a+1}
\end{equation}
for $(\nu-1)p+1>0$.
\end{lemma}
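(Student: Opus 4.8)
The plan is to deduce both estimates from the explicit representation formulas for $(v_{in},P_1)$ already used in the proof of Lemma~\ref{apriori-SSin}, by performing one further differentiation and then re-running the kernel estimates of that proof with shifted exponents.

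For the gradient bound \eqref{est-gradvin} I would start from the Oseen representation
$$
(v_{in})_i(x,t)=-\int_0^t\!\!\int_{\R^2}\partial_{z_k}S_{ij}(x-z,t-s)\,F_{jk}(z,s)\,dz\,ds ,
$$
and integrate by parts in $z$ to move one spatial derivative off the Oseen kernel and onto $F$; this produces the volume term $-\int_0^t\!\int_{\R^2}S_{ij}(x-z,t-s)\,\partial_{z_k}F_{jk}(z,s)\,dz\,ds$ together with a boundary term supported on $\partial B_{2\delta}(q)$ coming from the jump of $F_{in}=F\chi_{B_{2\delta}(q)}$. On that sphere $|F|\lesssim\|F\|_{S,\nu-2,a+1}\,\la_*^{\nu+a-1}(s)$ by \eqref{ineqineqineq}, so — because $a>1$ — the boundary term and its $x$-derivative are of order $\la_*^{\nu+a-1}$ and are absorbed into the stated bound. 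Differentiating the volume term once more in $x$ and invoking \eqref{est-Oseentensor} with $l=1$, i.e. $|\nabla_x S_{ij}(x-z,t-s)|\lesssim(|x-z|+\sqrt{t-s})^{-3}$, together with the gradient weight built into the norm \eqref{def-normSSF}, I obtain
$$
|\nabla_x v_{in}(x,t)|\lesssim\|F\|_{S,\nu-2,a+1}\int_0^t\!\!\int_{\R^2}\frac{1}{(|x-z|+\sqrt{t-s})^{3}}\;\frac{\la_*^{\nu-3}(s)}{1+\bigl|\frac{z-q}{\la_*(s)}\bigr|^{a+2}}\,dz\,ds\;+\;(\text{lower order}).
$$
This integral is \emph{exactly} the quantity $I_1+I_2$ estimated in \eqref{est-vvvvvv}--\eqref{est-I22}, except that the pair $(\nu,a)$ is replaced throughout by $(\nu-1,a+1)$. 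Since the only structural hypothesis used in that computation was $a>1$, and here $a+1>1$ holds a fortiori, the same splitting — into $I_1$, $I_2$ and the regions $D_1,D_2,D_3$ — applies verbatim and yields $|\nabla_x v_{in}(x,t)|\lesssim\|F\|_{S,\nu-2,a+1}\,\la_*^{\nu-2}(t)\,(1+|y|)^{-1}$ with $y=(x-q)/\la_*(t)$, which is \eqref{est-gradvin}.

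For \eqref{est-dtvin} I would use that $\eta_\delta$ is independent of $t$, so $\partial_t(v_{in}\cdot\nabla\eta_\delta)=(\partial_t v_{in})\cdot\nabla\eta_\delta$, and that $\nabla\eta_\delta$ is supported in the fixed annulus $\mathcal A:=\{\delta\le|x-q|\le 2\delta\}$, on which $F_{in}=F$ is smooth away from the measure-zero sphere $\partial B_{2\delta}(q)$. Thus it suffices to estimate $\partial_t v_{in}$ on $\mathcal A$, and there I would exploit the equation $\partial_t v_{in}=\Delta_x v_{in}-\nabla_x P_1+\nabla_x\!\cdot\!F_{in}$: on $\mathcal A$ one is at distance $\gtrsim\delta$ from the concentration point $q$, so $|x-q|\sim\delta$ and $|y|\sim\delta/\la_*(t)$ is large, and combining \eqref{est-vin}, \eqref{est-P1}, \eqref{est-gradvin}, the decay \eqref{ineqineqineq} of $\nabla F$, and interior parabolic/elliptic estimates — to pass from $v_{in}$, $P_1$ to $\Delta v_{in}$, $\nabla P_1$ on $\mathcal A$, or, alternatively, by differentiating the representation formulas once more with all kernels now evaluated away from their singularities — one gets $|\partial_t v_{in}(x,t)|\lesssim C_\delta\,\|F\|_{S,\nu-2,a+1}\,\la_*^{\nu-1}(t)$ on $\mathcal A$. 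Integrating the $p$-th power over the fixed set $\mathcal A$ in space and in time, and using that $\la_*(t)$ is comparable to $T-t$ up to logarithmic factors, gives
$$
\|\partial_t(v_{in}\cdot\nabla\eta_\delta)\|_{L^p(\mathcal A\times(0,T))}\lesssim C_\delta\,\|F\|_{S,\nu-2,a+1}\Bigl(\int_0^T\la_*^{(\nu-1)p}(t)\,dt\Bigr)^{1/p}\lesssim\|F\|_{S,\nu-2,a+1},
$$
the last integral being finite precisely under the hypothesis $(\nu-1)p+1>0$.

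The main obstacle, I expect, is the combination of two features in the gradient/second-order estimates: the second spatial derivative of the Oseen tensor, $\nabla^2 S\sim(|x-z|^2+t)^{-2}$, is not locally integrable in $\R^2$, which forces the integration by parts onto $F$ and hence the appearance of the boundary term on $\partial B_{2\delta}(q)$; and the pressure is nonlocal, so $\nabla P_1$ on the annulus $\mathcal A$ cannot be read off from a purely local argument. The boundary term is harmless only because $|F|$ is of the very small order $\la_*^{\nu+a-1}$ there, which is where $a>1$ enters; and $\nabla P_1$ on $\mathcal A$ has to be controlled either by differentiating the singular integral defining $P_1$ and re-running the $D_1,D_2,D_3$ splitting of the proof of \eqref{est-P1}, or by interior elliptic estimates for $P_1$ on balls contained in $\{|x-q|\ge\delta/2\}$; this is the step requiring the most care. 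Everything else — the reduction of \eqref{est-gradvin} to the already-performed computation under $(\nu,a)\mapsto(\nu-1,a+1)$, and the time integrability producing the condition $(\nu-1)p+1>0$ — is routine.
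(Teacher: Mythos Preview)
Your argument for \eqref{est-gradvin} is essentially the paper's: both arrive at
\[
|\nabla_x v_{in}(x,t)|\lesssim\|F\|_{S,\nu-2,a+1}\int_0^t\!\!\int_{\R^2}\frac{1}{(|x-z|^2+(t-s))^{3/2}}\,\frac{\la_*^{\nu-3}(s)}{1+|({z-q})/{\la_*(s)}|^{a+2}}\,dz\,ds
\]
and then re-run the $I_1,I_2$ and $D_1,D_2,D_3$ splitting with $(\nu,a)\mapsto(\nu-1,a+1)$. You are simply more explicit than the paper about the integration by parts and the harmless boundary term on $\partial B_{2\delta}(q)$.

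For \eqref{est-dtvin} the paper takes a genuinely different route. Rather than seeking a pointwise bound on $\partial_t v_{in}$ via $\partial_t v_{in}=\Delta v_{in}-\nabla P_1+\nabla\!\cdot\!F_{in}$, it observes that the scalar $w:=v_{in}\cdot\nabla\eta_\delta$ solves a heat equation in $\R^2$,
\[
\partial_t w=\Delta w-\Delta(\nabla\eta_\delta)\!\cdot\! v_{in}-2\nabla^2\eta_\delta\!\cdot\!\nabla v_{in}-\nabla P_1\!\cdot\!\nabla\eta_\delta+(\nabla\!\cdot\!F_{in})\!\cdot\!\nabla\eta_\delta,
\]
whose right-hand side is supported in the annulus $\mathcal A$ by the factor $\nabla\eta_\delta$. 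Parabolic maximal $L^p$ regularity then bounds $\|\partial_t w\|_{L^p}$ by the $L^p$ norm of this right-hand side, and each term there is controlled in $L^p(\mathcal A\times(0,T))$ using only the already-proved pointwise bounds \eqref{est-vin}, \eqref{est-gradvin}, \eqref{ineqineqineq} together with interior Stokes $W^{2,1}_p$ theory for $\|\nabla P_1\|_{L^p(\mathcal A\times(0,T))}$; the worst term is $\nabla v_{in}\sim\la_*^{\nu-1}$, whence the condition $(\nu-1)p+1>0$. The virtue of this approach is that it never asks for a pointwise bound on $\Delta v_{in}$, which is the weak point of your sketch: the phrase ``kernels now evaluated away from their singularities'' is not accurate, since the Oseen kernel is singular at $z=x\in\mathcal A$, and a second $x$-derivative of $S$ is not time-space integrable in two dimensions. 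Your alternative via interior Schauder for Stokes could be pushed through, but the paper's $L^p$ route is both shorter and avoids that issue entirely.
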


\begin{proof}
Since we impose zero initial condition on $v_{in}$, we have
\begin{equation*}
\begin{aligned}
|\partial_{x_l} v_i(x,t)|\lesssim &~ \|F\|_{S,\nu-2,a+1}\int_0^t \int_{\R^2} \frac{1}{\left(|x-z|^2+(t-s)\right)^{3/2}}\frac{\la_*^{\nu-3}(s)}{1+\left|\frac{z-q}{\la_*(s)}\right|^{a+2}}\,dz ds\\
\end{aligned}
\end{equation*}
where we have used \eqref{est-Oseentensor}. We decompose the above integral and first estimate
\begin{equation*}
\begin{aligned}
&\quad \int_0^{t-(T-t)^2} \int_{\R^2} \frac{1}{\left(|x-z|^2+(t-s)\right)^{3/2}}\frac{\la_*^{\nu-3}(s)}{1+\left|\frac{z-q}{\la_*(s)}\right|^{a+2}}\,dz ds\\
&\lesssim \la_*^{\nu+a-1}(t)\left(\int_{D_1(x)}+\int_{D_2(x)}+\int_{D_3(x)}\right) \frac{1}{\la_*^{a+2}(t)+|z-q|^{a+2}}\frac{1}{|x-z|+\la_*(t)}\, dz,
\end{aligned}
\end{equation*}
where $D_1(x)$, $D_2(x)$ and $D_3(x)$ are defined in \eqref{def-DD1}, \eqref{def-DD2} and \eqref{def-DD3}, respectively. Then we can easily check the following
\begin{equation*}
\begin{aligned}
\int_{D_1(x)} \frac{1}{\la_*^{a+2}(t)+|z-q|^{a+2}}\frac{1}{|x-z|+\la_*(t)}\, dz \lesssim&~ \frac{\la_*^{-a}(t)}{|x-q|+\la_*(t)}
\\
\int_{D_2(x)} \frac{1}{\la_*^{a+2}(t)+|z-q|^{a+2}}\frac{1}{|x-z|+\la_*(t)}\, dz \lesssim&~ \frac{1}{|x-q|^{a+1}+\la_*^{a+1}(t)}
\\
\int_{D_3(x)} \frac{1}{\la_*^{a+2}(t)+|z-q|^{a+2}}\frac{1}{|x-z|+\la_*(t)}\, dz \lesssim&~ \frac{1}{|x-q|^{a+1}+\la_*^{a+1}(t)}
\end{aligned}
\end{equation*}
and thus
\begin{equation*}
\int_0^{t-(T-t)^2} \int_{\R^2} \frac{1}{\left(|x-z|^2+(t-s)\right)^{3/2}}\frac{\la_*^{\nu-3}(s)}{1+\left|\frac{z-q}{\la_*(s)}\right|^{a+2}}\,dz ds \lesssim \frac{\la_*^{\nu-2}(t)}{1+|y|},
\end{equation*}
where we write $y=\frac{x-q}{\la_*}$. For the other part, we have
\begin{equation*}
\begin{aligned}
&\quad \int_{t-(T-t)^2}^t \int_{\R^2} \frac{1}{\left(|x-z|^2+(t-s)\right)^{3/2}}\frac{\la_*^{\nu-3}(s)}{1+\left|\frac{z-q}{\la_*(s)}\right|^{a+2}}\,dz ds\\
&\lesssim \int_{\R^2} \int_{\frac{|x-z|}{T-t}}^{\infty} \frac{1}{(1+\tilde s)^3 |x-z|} \frac{\la_*^{\nu+a-1}(t)}{\la_*^{a+2}(t)+|z-q|^{a+2}}\,d\tilde s dz\\
&\lesssim \la_*^{\nu+a+1}(t) \int_{\R^2} \frac{1}{\la_*^{a+2}(t)+|z-q|^{a+2}}\frac{1}{|x-z|}\frac{1}{\la_*^2(t)+|x-z|^2}\,dz,\\
\end{aligned}
\end{equation*}
where we have changed variable $\tilde s=\frac{|x-z|}{\sqrt{t-s}}$. Similar to the proof of Lemma \ref{apriori-SSin}, the following bound holds
\begin{equation*}
\int_{t-(T-t)^2}^t \int_{\R^2} \frac{1}{\left(|x-z|^2+(t-s)\right)^{3/2}}\frac{\la_*^{\nu-3}(s)}{1+\left|\frac{z-q}{\la_*(s)}\right|^{a+2}}\,dz ds \lesssim \frac{\la_*^{\nu-2}(t)}{1+|y|^2}.
\end{equation*}
Collecting the above estimates, we conclude the validity of \eqref{est-gradvin}.

\medskip

Next we prove \eqref{est-dtvin}. Multiplying equation \eqref{eqn-vin} by $\nabla \eta_{\delta}$, we obtain that $v_{in}\cdot \nabla \eta_{\delta}$ satisfies the equation
\begin{equation*}
\partial_t (v_{in}\cdot \nabla \eta_{\delta})=\Delta (v_{in}\cdot \nabla \eta_{\delta})-\Delta(\nabla \eta_{\delta})\cdot v_{in}-2\nabla^2 \eta_{\delta}\cdot \nabla v_{in}-\nabla P_1\cdot \nabla \eta_{\delta}+(\nabla\cdot F_{in})\cdot \nabla \eta_{\delta}.
\end{equation*}
Thanks to the cut-off function $\eta_{\delta}$, standard $W^{2,1}_p$-theory for parabolic equation yields
\begin{equation}\label{est-dtvin111}
\begin{aligned}
&~\quad \|\partial_t (v_{in}\cdot \nabla \eta_{\delta})\|_{L^p\left((B_{2\delta}(q)\setminus B_{\delta}(q))\times(0,T)\right)}\\
&\lesssim \|v_{in}\|_{L^p\left((B_{2\delta}(q)\setminus B_{\delta}(q))\times(0,T)\right)} + \|\nabla v_{in}\|_{L^p\left((B_{2\delta}(q)\setminus B_{\delta}(q))\times(0,T)\right)}\\
&~\quad+ \|\nabla P_1\|_{L^p\left((B_{2\delta}(q)\setminus B_{\delta}(q))\times(0,T)\right)}+ \|\nabla\cdot F\|_{L^p\left((B_{2\delta}(q)\setminus B_{\delta}(q))\times(0,T)\right)}.
\end{aligned}
\end{equation}
Using the $W^{2,1}_p$-theory for the Stokes system (see \cite{SoloRMS2003} for instance), we readily see that
\begin{equation}\label{est-dtvin222}
\|\nabla P_1\|_{L^p\left((B_{2\delta}(q)\setminus B_{\delta}(q))\times(0,T)\right)}\lesssim  \|\nabla\cdot F\|_{L^p\left((B_{2\delta}(q)\setminus B_{\delta}(q))\times(0,T)\right)}.
\end{equation}
From \eqref{est-dtvin111}, \eqref{est-dtvin222}, \eqref{est-vin}, \eqref{est-gradvin} and the assumption $\|F\|_{S,\nu-2,a+1}<+\infty$, we obtain
\begin{equation*}
\|\partial_t (v_{in}\cdot \nabla \eta_{\delta})\|_{L^p\left((B_{2\delta}(q)\setminus B_{\delta}(q))\times(0,T)\right)} \lesssim \|F\|_{S,\nu-2,a+1}
\end{equation*}
provided $(\nu-1)p+1>0$. The proof is complete.
\end{proof}

We are ready to estimate the outer part \eqref{eqn-vout}.
\begin{lemma}\label{apriori-SSout}
For $\|F\|_{S,\nu-2,a+1}<+\infty$ and $\|v_0\|_{B^{2-2/p}_{p,p}}<+\infty$, the solution $(v_{out},P)$ of the system \eqref{eqn-vout} satisfies
\begin{equation}\label{est-vout-W21}
\|v_{out}\|_{W^{2,1}_p(\Omega\times(0,T))}+\|\na(P-\eta_\de P_1)\|_{L^p(\Omega\times(0,T))}\lesssim\|F\|_{S,\nu-2,a+1}+\|v_0\|_{B^{2-2/p}_{p,p}}
\end{equation}
for $(\nu-1)p+1>0$. If we further assume $\nu\in(1/2,1)$, then we have
\begin{equation}\label{est-vout-holder}
\|v_{out}\|_{C^{\al,\al/2}(\Omega\times(0,T))}\lesssim\|F\|_{S,\nu-2,a+1}+\|v_0\|_{B^{2-2/p}_{p,p}}
\end{equation}
for $0<\al\le2-4/p$.
\end{lemma}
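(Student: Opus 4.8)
The plan is to remove the inhomogeneous divergence constraint in \eqref{eqn-vout} by a Bogovskii-type corrector and then to apply the $L^p$ maximal regularity theory for the Stokes system on the fixed bounded smooth domain $\Omega$ (cf. \cite{SoloRMS2003}), using Lemma~\ref{apriori-SSin} and the estimates \eqref{est-gradvin}, \eqref{est-dtvin} to bound the resulting right-hand side. \textbf{Step 1 (lifting the divergence).} Set $g:=-\na\eta_{\delta}\cdot v_{in}$, which is supported in the fixed annulus $A:=\{\,\delta\le|x-q|\le 2\delta\,\}$ and satisfies $\int_A g(\cdot,t)\,dx=0$ (since $\na\cdot v_{in}=0$ and $\eta_{\delta}$ vanishes on $\pd\Omega$). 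Let $w(\cdot,t)=\mathcal B g(\cdot,t)$, where $\mathcal B$ is the Bogovskii operator on $A$, so that $\na\cdot w=g$, $w(\cdot,t)$ is compactly supported in a fixed neighborhood of $A$ (hence $w|_{\pd\Omega}=0$), $w(\cdot,0)=0$ (since $v_{in}(\cdot,0)=0$), and, because $\pd_t w=\mathcal B(\pd_t g)$ with $\int_A\pd_t g=0$,
\begin{equation*}
\|w(\cdot,t)\|_{W^{2,p}(\Omega)}\lesssim\|v_{in}(\cdot,t)\|_{W^{1,p}(A)},\qquad \|\pd_t w\|_{L^p(\Omega\times(0,T))}\lesssim\|\pd_t g\|_{L^p(A\times(0,T))}.
\end{equation*}

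\textbf{Step 2 (solenoidal Stokes system).} Then $\tilde v:=v_{out}-w$ and $\Pi:=P-\eta_{\delta}P_1$ solve
\begin{equation*}
\pd_t\tilde v+\na\Pi=\Delta\tilde v+\tilde f,\quad \na\cdot\tilde v=0\inn\Omega\times(0,T),\quad \tilde v|_{\pd\Omega}=0,\quad \tilde v(\cdot,0)=v_0,
\end{equation*}
with $\tilde f:=(1-\eta_{\delta})\na\cdot F+2\na\eta_{\delta}\cdot\na v_{in}+(\Delta\eta_{\delta})v_{in}-P_1\na\eta_{\delta}+\Delta w-\pd_t w$. Since $v_0\in B^{2-2/p}_{p,p}$ is exactly in the trace space for $W^{2,1}_p$, the $L^p$ maximal regularity for the Stokes system on $\Omega$ gives
\begin{equation*}
\|\tilde v\|_{W^{2,1}_p(\Omega\times(0,T))}+\|\na\Pi\|_{L^p(\Omega\times(0,T))}\lesssim\|\tilde f\|_{L^p(\Omega\times(0,T))}+\|v_0\|_{B^{2-2/p}_{p,p}}.
\end{equation*}

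\textbf{Step 3 (estimating $\tilde f$).} Every term of $\tilde f$ other than $\Delta w-\pd_t w$ is supported either in $\{|x-q|\ge\delta\}$ or in $A$, where $|(x-q)/\la_*(t)|\gtrsim\delta/\la_*(t)\to\infty$. Hence \eqref{est-vin}, \eqref{est-gradvin} and \eqref{est-P1} give, on $A$,
\begin{equation*}
|v_{in}|+|\na v_{in}|\lesssim\|F\|_{S,\nu-2,a+1}\,\la_*^{\nu-1}(t),\qquad |P_1|\lesssim\|F\|_{S,\nu-2,a+1}\,\la_*^{\nu}(t),
\end{equation*}
and $|(1-\eta_{\delta})\na\cdot F|\lesssim\|F\|_{S,\nu-2,a+1}\,\la_*^{\nu+a-1}(t)$ on $\{|x-q|\ge\delta\}$. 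Because $\la_*(t)\sim(T-t)|\log(T-t)|^{-2}$, the function $\la_*^{\nu-1}$ belongs to $L^p(0,T)$ precisely when $(\nu-1)p+1>0$, and the remaining time weights $\la_*^{\nu}$, $\la_*^{\nu+a-1}$ are no worse; moreover $\|\Delta w\|_{L^p}\lesssim\|\na v_{in}\|_{L^p(A\times(0,T))}\lesssim\|F\|_{S,\nu-2,a+1}$ and, by \eqref{est-dtvin}, $\|\pd_t w\|_{L^p}\lesssim\|\pd_t(\na\eta_{\delta}\cdot v_{in})\|_{L^p(A\times(0,T))}\lesssim\|F\|_{S,\nu-2,a+1}$. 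Thus $\|\tilde f\|_{L^p(\Omega\times(0,T))}\lesssim\|F\|_{S,\nu-2,a+1}$ whenever $(\nu-1)p+1>0$.

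\textbf{Step 4 (conclusion).} Combining Steps~2–3 and $v_{out}=\tilde v+w$ with $\|w\|_{W^{2,1}_p}\lesssim\|F\|_{S,\nu-2,a+1}$ yields \eqref{est-vout-W21}. For \eqref{est-vout-holder}, assume $\nu\in(1/2,1)$; then $1/(1-\nu)>2$, so one may fix $p$ with $2<p<1/(1-\nu)$. For such $p$ the condition $(\nu-1)p+1>0$ holds, hence \eqref{est-vout-W21} applies, and the parabolic Sobolev embedding $W^{2,1}_p(\Omega\times(0,T))\hookrightarrow C^{\al,\al/2}(\bar\Omega\times[0,T])$ for $0<\al\le 2-4/p$ (valid since $p>2$), together with $B^{2-2/p}_{p,p}(\Omega)\hookrightarrow C^{\al}(\bar\Omega)$ for the initial slice, gives \eqref{est-vout-holder}. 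I expect the main obstacle to be Steps~1–3 taken together: building the corrector $w$ with simultaneous $W^{2,p}_x$ and $L^p_{t,x}$ time-derivative control — which is exactly what \eqref{est-gradvin} and \eqref{est-dtvin} supply — and verifying that the singular-in-time factor $\la_*^{\nu-1}$ appearing in $\tilde f$ is $L^p$-integrable in time, which is precisely where the hypothesis $(\nu-1)p+1>0$ is used.
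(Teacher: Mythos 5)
Your proof is correct, but it takes a genuinely different route from the paper. The paper applies the Solonnikov $W^{2,1}_p$-estimate for the Stokes system \emph{with non-zero divergence} (\cite[Theorem 3.1]{SoloRMS2003}) directly to \eqref{eqn-vout}, which measures the inhomogeneous divergence $-\na\eta_\delta\cdot v_{in}$ in $L^p(0,T;W^1_p(\Omega))$ and its time derivative in $L^p(0,T;W^{-1}_p(\Omega))$, and then uses $\|\cdot\|_{W^{-1}_p}\le\|\cdot\|_{L^p}$ together with \eqref{est-vin}, \eqref{est-gradvin}, \eqref{est-P1}, \eqref{est-dtvin} to close. You instead lift the divergence constraint first via a Bogovskii corrector $w=\mathcal{B}g$ on the annulus $A$ (observing correctly that $\int_A g=0$, $g(\cdot,0)=0$, and that $\mathcal{B}$ commutes with $\pd_t$ so that $\|\pd_t w\|_{L^p}$ is controlled by the same $L^p$ bound \eqref{est-dtvin}), and then apply standard solenoidal Stokes maximal $L^p$-regularity, folding $\Delta w-\pd_t w$ into the forcing. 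Both arguments reduce to the identical $\la_*$-power estimates and to the integrability condition $(\nu-1)p+1>0$, and both deduce \eqref{est-vout-holder} from \eqref{est-vout-W21} by choosing $p\in(2,1/(1-\nu))$ and invoking the parabolic Sobolev/Morrey embedding. What your approach buys is independence from the specialized non-zero-divergence Stokes estimate: it relies only on the Bogovskii lemma and the classical solenoidal theory, at the small cost of checking the mean-zero and time-regularity hypotheses for the corrector (for the annulus $A$ one must of course use the version of the Bogovskii construction valid on Lipschitz, not merely star-shaped, domains). What the paper's route buys is brevity, since the Solonnikov theorem already packages the divergence lifting.
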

\begin{proof}
The $W^{2,1}_p$ estimate of solutions to Stokes system with non-zero divergence derived in \cite[Theorem 3.1]{SoloRMS2003} shows that
\EQ{\label{est-solo2003}
&~\quad\|v_{out}\|_{W^{2,1}_p(\Omega\times(0,T))}+\|\na(P-\eta_\de P_1)\|_{L^p(\Omega\times(0,T))}\\
&\lesssim\|(1-\eta_\de)\na\cdot F+2\na\eta_\de\cdot\na v_{in}+(\Delta\eta_\de)v_{in}-P_1\na\eta_\de\|_{L^p(\Omega\times(0,T))}+\|\na\eta_\de\cdot v_{in}\|_{L^p(0,T;W^1_p(\Omega))}\\
&~\quad+\|\pd_t(\na\eta_\de\cdot v_{in})\|_{L^p(0,T;W^{-1}_p(\Omega))}+\|v_0\|_{B^{2-2/p}_{p,p}},
}
where $\|\cdot\|_{B^{2-2/p}_{p,p}}$ is the Besov norm defined in \eqref{besov}. Thanks to the cut-off function $\eta_{\delta}$, we get
\[\left|(1-\eta_\de)\na\cdot F \right|\lesssim\|F\|_{S,\nu-2,a+1}\la_*^{\nu+a-1},\]
and from \eqref{est-vin}, \eqref{est-P1}, \eqref{est-gradvin} and \eqref{est-dtvin}, one has
\[\left|\na\eta_\de\cdot \na v_{in}\right|+\left|(\De\eta_\de)v_{in}\right|+\left|P_1\na\eta_\de\right|\lesssim\|F\|_{S,\nu-2,a+1}\la_*^{\nu-1},\]
and also
\begin{align*}
\left|\na\eta_\de\cdot v_{in}\right|\lesssim&~\|F\|_{S,\nu-2,a+1}\la_*^\nu,\\
\|\pd_t(\na\eta_\de\cdot v_{in})\|_{L^p(0,T;W^{-1}_p(\Omega))}\lesssim&~ \|F\|_{S,\nu-2,a+1}.
\end{align*}
It is worth noting that $\|\cdot\|_{L^p(0,T;W^{-1}_p(\Omega))}\le\|\cdot\|_{L^p(0,T;L^p(\Omega))}$ (see \cite{UBCAdams} for instance). Therefore, estimate \eqref{est-solo2003} together with the above bounds imply \eqref{est-vout-W21} for $(\nu-1)p+1>0$.
The H\"{o}lder estimate \eqref{est-vout-holder} then follows from a standard parabolic version of Morrey type inequality (see \cite{Lieberman} for instance). The proof is complete.
\end{proof}

The proof of Proposition \ref{prop-SS} is a direct consequence of Lemma \ref{apriori-SSin} and Lemma \ref{apriori-SSout}.

\medskip

For the behavior of the velocity field $v$, we further make several remarks:

\begin{remark}\label{rmk-SS}
\noindent
\begin{itemize}
\item From \eqref{def-normSSF}, Proposition \ref{prop-SS} implies
\begin{equation*}
\|v\|_{S,\nu-1,1}\lesssim \|F\|_{S,\nu-2,a+1}.
\end{equation*}

\item Since $v$ is divergence-free, we can write $v\cdot \nabla v=\nabla\cdot(v\otimes v)$, where $\otimes$ is the tensor product defined by $(v\otimes w)_{ij}=v_iw_j$. If we solve $v$ in the class $\|v\|_{S,\nu-1,1}<\infty$, then the nonlinearity in the Navier--Stokes equation
 $$|v\cdot \nabla v|\lesssim \frac{\la_*^{2\nu-3}(t)}{1+\left|\frac{x-q}{\la_*(t)}\right|^3}$$
is indeed a perturbation compared to $\nabla\cdot F$, which enables us to solve $v$ by the fixed point argument in Section \ref{sec-LCF}.

\item The initial velocity $v_0$ in the outer problem \eqref{eqn-vout} can be chosen arbitrarily in the Besov space $B^{2-2/p}_{p,p}$ in which the norm is defined by
\EQ{\label{besov}
&\quad\|v_0\|_{B^{2-2/p}_{p,p}}\\
&:=\left(\int_{|z|<1}|z|^{-2p}\int_{\Omega(z)}|v_0(x+2z)-v_0(x+z)+v_0(x)|^pdxdz\right)^{1/p}+\|v_0\|_{L^p(\Omega)},
}
where $\Omega(z)=\{x\in\Omega:x+tz\in\Omega,\,t\in[0,1]\}$, as long as it agrees with zero at the boundary and satisfies the condition
\[\na\cdot v_0=-\nabla \eta_{\delta}\cdot v_{in}(x,0)=0.\]
To ensure the non-triviality of velocity field, i.e, $v\not\equiv0$, we choose a non-trivial solenoidal (divergence-free) initial velocity $v_0\in B^{2-2/p}_{p,p}$, where $p>1/\nu$.
\end{itemize}
\end{remark}

\medskip


\section{Solving the nematic liquid crystal flow}\label{sec-LCF}

\medskip

In this section, we shall apply  the linear theories developed in Section \ref{sec-HMF} and Section \ref{sec-SS} to show the existence of the desired blow-up solution to \eqref{LCF}--\eqref{BCs} by means of the fixed point argument. Apriori we need some assumptions on the behavior of the parameter functions $p(t)=\la(t) e^{i\omega(t)}$ and $\xi(t)$
\begin{align*}
c_1 |\dot\la_*(t)|\leq |\dot p(t)|\leq&~ c_2|\dot\la_*(t)| ~\mbox{ for all }~t\in(0,T),\\
|\dot\xi(t)|\leq&~ \la^{\sigma}_*(t)~\mbox{ for all }~t\in(0,T),
\end{align*}
where $c_1$, $c_2$ and $\sigma$ are some positive constants independent of $T$. We recall that
$$R=R(t)=\la_*^{-\gamma_*}(t)~\mbox{ with }~\la_*(t)=\frac{|\log T|(T-t)}{|\log(T-t)|^2}~\mbox{ and }~\gamma_*\in(0,1/2).$$

Similar to the harmonic map heat flow, we look for solution $u$ solving problem \eqref{LCF} in the form
\begin{equation*}
u=U+\Pi_{U^{\perp}}\varphi +a(\Pi_{U^{\perp}}\varphi) U,
\end{equation*}
with
\begin{equation*}
\varphi=\eta_R Q_{\omega,\alpha,\beta} \phi(y,t) +\Psi^*(x,t)+\Phi^0(x,t)+\Phi^{\alpha}(x,t)+\Phi^{\beta}(x,t),
\end{equation*}
where we decompose $\Psi^*$ into
$$\Psi^*=Z^*+\psi.$$
Here $Z^*$ satisfies
\begin{equation*}
\begin{cases}
\partial_t Z^*=\Delta Z^*,~&\mbox{ in }\Omega\times(0,\infty)\\
Z^*(\cdot,t)=0,~&\mbox{ on }\partial\Omega\times(0,\infty)\\
Z^*(\cdot,0)=Z^*_0,~&\mbox{ in }\Omega\\
\end{cases}
\end{equation*}
For the same technical reasons as shown in \cite{17HMF}, we make some assumptions on $Z_0^*(x)$ as follows.
Let us write
\begin{align*}
Z_0^*(x) =  \left [ \begin{matrix}z_0^*(x) \\  z_{03}^* (x) \end{matrix}   \right ] , \quad z_0^*(x) = z^*_{01}(x)  + i z^*_{02}(x)  .
\end{align*}
Consistent with \eqref{negativeDiv}, the first condition that we need is
\begin{equation*}
 \div  z^*_0(q) <0.
 \end{equation*}
In addition, we require that $Z_0^*(q)\approx 0 $ in a non-degenerate way.

We will get a desired solution $(v,u)$ to problem \eqref{LCF} if $(v,\phi,\Psi^*,p,\xi,\alpha,\beta)$ solves the following {\em inner--outer gluing system}
\begin{equation}\label{eqn-NS}
\left\{
\begin{aligned}
&\partial_t v+v\cdot \nabla v+\nabla P=\Delta v -\epsilon_0\nabla\cdot \mathcal F[p,\xi,\alpha,\beta,\Psi^*,\phi,v],~\mbox{ in }~\Omega\times(0,T),\\
&\nabla\cdot v=0,~\mbox{ in }~\Omega\times(0,T),\\
&v=0,~\mbox{ on }~\partial\Omega\times(0,T),\\
&v(\cdot,0)=v_0,~\mbox{ in }~\Omega,\\
\end{aligned}
\right.
\end{equation}
\begin{equation}\label{eqn-dinner}
\left\{
\begin{aligned}
&\la^2\partial_t \phi= L_W[\phi]+\mathcal H[p,\xi,\alpha,\beta,\Psi^*,\phi,v],~\mbox{ in }~\mathcal D_{2R},\\
&\phi(\cdot,0)=0,~\mbox{ in }~B_{2R(0)},\\
&\phi\cdot W=0,~\mbox{ in }~\mathcal D_{2R},\\
\end{aligned}
\right.
\end{equation}
\begin{equation}\label{eqn-douter}
\left\{
\begin{aligned}
&\partial_t \Psi^*=\Delta_x\Psi^*+\mathcal G[p,\xi,\alpha,\beta,\Psi^*,\phi,v]~\mbox{ in }~\Omega\times(0,T),\\
&\Psi^*={\bf e_3}-U-\Phi^0-\Phi^{\alpha}-\Phi^{\beta}~\mbox{ on }~\pd\Omega\times(0,T),\\
&\Psi^*(\cdot,0)=(1-\chi)\left({\bf e_3}-U-\Phi^0-\Phi^{\alpha}-\Phi^{\beta}\right)~\mbox{ in }~\Omega,\\
\end{aligned}
\right.
\end{equation}
where
\begin{equation}\label{def-mF}
\mathcal F[p,\xi,\alpha,\beta,\Psi^*,\phi,v]=\left(\nabla u\odot \nabla u-\frac12|\nabla u|^2\mathbb I_2\right),
\end{equation}
with
\begin{equation*}
u=U+\Pi_{U^{\perp}}[\eta_R Q_{\omega,\alpha,\beta} \phi +\Psi^*+\Phi^0+\Phi^{\alpha}+\Phi^{\beta}] +a(\Pi_{U^{\perp}}[\eta_R Q_{\omega,\alpha,\beta} \phi +\Psi^*+\Phi^0+\Phi^{\alpha}+\Phi^{\beta}]) U,
\end{equation*}
\begin{equation*}
\begin{aligned}
\mathcal H[p,\xi,\alpha,\beta,\Psi^*,\phi,v]=&~
\la^2 Q^{-1}_{\omega,\alpha,\beta}\Bigg[\tilde L_U[\Psi^*]+\mathcal K_0[p,\xi]+\mathcal K_1[p,\xi]+\Pi_{U^{\perp}}[\RR_{-1}]-\la^{-1}\Pi_{U^{\perp}}(v \cdot \nabla_y U)\\
&~-\la^{-1}\Pi_{U^{\perp}}\left(v \cdot \nabla_y \left(\Pi_{U^{\perp}}[\eta_R Q_{\omega,\alpha,\beta} \phi  +\Psi^* +\Phi^0 +\Phi^{\alpha} +\Phi^{\beta}]\right)\right)\\
&~-\la^{-1}\Pi_{U^{\perp}}\left(v \cdot \nabla_y \left(a(\Pi_{U^{\perp}}[\eta_R Q_{\omega,\alpha,\beta} \phi  +\Psi^* +\Phi^0 +\Phi^{\alpha} +\Phi^{\beta}]) U\right)\right)\Bigg],
\end{aligned}
\end{equation*}
and
\begin{equation*}
\begin{aligned}
\mathcal G[p,\xi,\alpha,\beta,\Psi^*,\phi,v]:=&~(1-\eta_R) \tilde L_U[\Psi^*]+(\Psi^*\cdot U) U_t+Q_{\omega,\alpha,\beta}(\phi\Delta_x \eta_R+2\nabla_x \eta_R\cdot \nabla_x \phi-\phi\partial_t \eta_R)\\
&~+\eta_R Q_{\omega,\alpha,\beta}\left(-\left(Q_{\omega,\alpha,\beta}^{-1}\frac{d}{dt}Q_{\omega,\alpha,\beta}\right)\phi+\la^{-1}\dot\la y\cdot \nabla_y \phi +\la^{-1}\dot\xi\cdot \nabla_y \phi\right)\\
&~+(1-\eta_R)(\mathcal K_0[p,\xi]+\mathcal K_1[p,\xi]+\Pi_{U^{\perp}}[\RR_{-1}])-\Pi_{U^{\perp}}[\tilde{\mathcal R}_1]\\
&~+N_U[\eta_R Q_{\omega,\alpha,\beta} \phi+\Pi_{U^{\perp}}(\Phi^0+\Phi^{\alpha}+\Phi^{\beta}+\Psi^*)]\\
&~+\left((\Phi^0+\Phi^{\alpha}+\Phi^{\beta})\cdot U\right) U_t-(1-\eta_R)v\cdot \nabla U\\
&~-(1-\eta_R) v\cdot \nabla\left( \Pi_{U^{\perp}}\left[\eta_R Q_{\omega,\alpha,\beta} \phi  +\Psi^* +\Phi^0 +\Phi^{\alpha} +\Phi^{\beta} \right]\right)\\
&~-(1-\eta_R) v\cdot \nabla\left( a\left(\Pi_{U^{\perp}}\left[\eta_R Q_{\omega,\alpha,\beta} \phi  +\Psi^* +\Phi^0 +\Phi^{\alpha} +\Phi^{\beta} \right]\right)U\right).\\
\end{aligned}
\end{equation*}
Here $\chi$ in \eqref{eqn-douter} is a smooth cut-off function which is supported near a fixed neighborhood of $q$ independent of $T$.

As discussed in Section \ref{sec-ltinner}, suitable inner solution with space-time decay can be obtained under certain orthogonality conditions, which will be achieved by adjusting the parameter functions $p(t)$, $\xi(t)$, $\alpha(t)$ and $\beta(t)$. In order to solve the inner problem \eqref{eqn-dinner}, we further decompose it based on the Fourier modes
$$\mathcal H=\mathcal H_1+\mathcal H_2+\mathcal H_3+\mathcal H_4,$$
with
\begin{equation*}
\begin{aligned}
\mathcal H_1[p,\xi,\alpha,\beta,\Psi^*,\phi,v]
&=\left(
\lambda^2  Q^{-1}_{\omega,\alpha,\beta} \left(
\tilde L_U  [\Psi^* ]_0
+\tilde L_U  [\Psi^* ]_2 +\KK_{0}[p,\xi]\right)
 +\lambda  Q^{-1}_{\omega,\alpha,\beta} [\Pi_{U^{\perp}}(v\cdot\nabla u)]_0\right)\chi_{\mathcal D_{2R} },
\\
\mathcal H_2[p,\xi,\alpha,\beta,\Psi^*,\phi,v] &=\left(
\lambda^2  Q^{-1}_{\omega,\alpha,\beta} \left(\tilde L_U  [\Psi^* ]_1^{(0)}
+\KK_{1}[p,\xi]\right)\right.
\\
&\qquad\left.+\lambda  Q^{-1}_{\omega,\alpha,\beta}\left([\Pi_{U^{\perp}}(v\cdot\nabla u)]_{1}+ [\Pi_{U^{\perp}}(v\cdot\nabla u)]_{\perp}\right)\right) \chi_{\mathcal D_{2R} },
\\
\mathcal H_3[p,\xi,\alpha,\beta,\Psi^*,\phi,v] &= \lambda^2  Q^{-1}_{\omega,\alpha,\beta}
\left( \tilde L_U [\Psi^* ]_1 - \tilde L_U  [\Psi^* ]_1 ^{(0)}\right) \chi_{\mathcal D_{2R} },\\
\mathcal H_4[p,\xi,\alpha,\beta,\Psi^*,\phi,v] &=  \left( \lambda^2  Q^{-1}_{\omega,\alpha,\beta}
(\Pi_{U^{\perp}}[\RR_{-1,1}]+\Pi_{U^{\perp}}[\RR_{-1,2}])+\lambda  Q^{-1}_{\omega,\alpha,\beta} [\Pi_{U^{\perp}}(v\cdot\nabla u)]_{-1}\right) \chi_{\mathcal D_{2R} },
\end{aligned}
\end{equation*}
where $[\Pi_{U^\perp}(v\cdot\nabla u)]_{0}$, $[\Pi_{U^\perp}(v\cdot\nabla u)]_{-1}$, $[\Pi_{U^\perp}(v\cdot\nabla u)]_{1}$ and $[\Pi_{U^\perp}(v\cdot\nabla u)]_{\perp}$ correspond respectively to modes $0$, $-1$, $1$ and higher modes $k\geq 2$ defined in \eqref{def-Fourier1}--\eqref{def-Fourier3}, and
 \begin{align}
\nonumber
\tilde L_U [\Phi ]_1^{(0)} & =
   -\,
  2\la^{-1} w_\rho  \cos w \, \big [\,(\partial_{x_1} \varphi_3(\xi(t),t)) \cos \theta +    (\partial_{x_2} \varphi_3(\xi(t),t))) \sin  \theta \, \big ]\,Q_{\omega,\alpha,\beta} E_1
\\
\nonumber
& \quad - 2\la^{-1}  w_\rho  \cos w  \, \big [\, (\partial_{x_1} \varphi_3(\xi(t),t))) \sin \theta -    (\partial_{x_2} \varphi_3(\xi(t),t))) \cos  \theta \, \big ]\, Q_{\omega,\alpha,\beta}E_2
\end{align}
in the notation \eqref{notanota}.
Then by decomposing $\phi=\phi_1+\phi_2+\phi_3+\phi_4$ in a similar manner as $\mathcal H_i$'s, the inner problem \eqref{eqn-dinner} becomes
\begin{equation}
\left\{
\begin{aligned}
\lambda^2  \partial_t \phi_1
&= L_W [\phi_1] + \mathcal H_1[p,\xi,\alpha,\beta,\Psi^*,\phi,v]
-  \sum_{  j=1,2} \tilde c_{0j}[ \mathcal H_1[p,\xi,\alpha,\beta,\Psi^*,\phi,v]] w_\rho^2 Z_{0,j}
\\
\label{eqphi1}
& \qquad
-  \sum_{  j=1,2} c_{1j}[ \mathcal H_1[p,\xi,\alpha,\beta,\Psi^*,\phi,v]  ] w_\rho^2  Z_{1,j}
\inn \mathcal D_{2R}
\\
\phi_1\cdot  W  &=  0   \inn \mathcal D_{2R}
\\
\phi_1(\cdot, 0) &=0 \inn B_{2R(0)}
\end{aligned}
\right.
\end{equation}

\begin{align}
\left\{
\begin{aligned}
\label{eqphi2}
\lambda^2 \partial_t \phi_2
&= L_W [\phi _2]  + \mathcal H_2[p,\xi,\alpha,\beta,\Psi^*,\phi,v]
 -  \sum_{  j=1,2} c_{1j}[  \mathcal H_2[p,\xi,\alpha,\beta,\Psi^*,\phi,v]    ] w_\rho^2  Z_{1,j}    \inn \mathcal D_{2R}
\\
\phi_2\cdot  W  & = 0   \inn \mathcal D_{2R}
\\
\phi_2(\cdot, 0) & =0 \inn B_{2R(0)}
\end{aligned}
\right.
\end{align}

\begin{align}
\label{eqphi3}
\left\{
\begin{aligned}
\lambda^2  \partial_t  \phi_3 &= L_W [\phi_3] +
\mathcal H_3[p,\xi,\alpha,\beta,\Psi^*,\phi,v]  -  \sum_{  j=1,2} c_{1j}[  \mathcal H_3[p,\xi,\alpha,\beta,\Psi^*,\phi,v]   ] w_\rho^2 Z_{1,j}
\\
& \quad
+ \sum_{j=1,2} c_{0j}^*[p,\xi,\alpha,\beta,\Psi^*,\phi,v] w_\rho^2 Z_{0,j} \inn \mathcal D_{2R}
\\
\phi_3\cdot  W  & = 0   \inn \mathcal D_{2R}
\\
\phi_3(\cdot, 0) & =0 \inn B_{2R(0)}
\end{aligned}
\right.
\end{align}

\begin{align}
\label{eqphi4}
\left\{
\begin{aligned}
\lambda^2  \partial_t  \phi_4 &= L_W [\phi_4 ] +
\mathcal H_4[p,\xi,\alpha,\beta,\Psi^*,\phi,v]-  \sum_{j=1,2}  c_{-1,j}[   \mathcal H_4[p,\xi,\alpha,\beta,\Psi^*,\phi,v]  ] w_\rho^2 Z_{-1,j}
\\
\phi_4\cdot  W  & = 0   \inn \mathcal D_{2R}
\\
\phi_4(\cdot, 0) & =0 \inn B_{2R(0)}
\end{aligned}
\right.
\end{align}

\begin{align}
\label{redueqn-m0}
c^*_{0j}(t) - \tilde c_{0j} (t) &= 0 ~\mbox{ for all }~ t\in (0,T), \quad j=1,2,
\\
\label{redueqn-m1}
c_{1j}(t)  &=  0 ~\mbox{ for all }~ t\in (0,T), \quad j=1,2,
\\
\label{redueqn-m-1}
c_{-1,j}(t)  &=  0 ~\mbox{ for all }~ t\in (0,T), \quad j=1,2.
\end{align}

\medskip

Based on the linear theory developed in Section \ref{sec-ltinner}, we shall solve the inner problems \eqref{eqphi1}--\eqref{eqphi4} in the  norms below.

\medskip

\begin{itemize}
\item We use the norm $\|\cdot\|_{\nu_i,a_i}$ to measure the right hand side $\mathcal H_i$ with $i=1,\cdots,4$, where
\begin{align}
\label{norm-h}
\|h\|_{\nu_i,a_i}  =
\sup_{\R^2 \times (0,T)} \  \frac{ |h(y,t)| }{ \lambda_*^{\nu_i}(t) (1+|y|)^{-a_i}}
\end{align}
with $\nu_i>0$, $a_i\in(2,3)$ for $i=1,\,2,\,4$, and $a_3\in (1,3)$.

\item We use the norm $\|\cdot\|_{*,\nu_1,a_1,\delta}$ to measure the solution $\phi_1$ solving \eqref{eqphi1}, where
\begin{align*}
\| \phi \|_{*,\nu_1,a_1,\delta}
=
\sup_{\mathcal D_{2R}}
\frac{| \phi(y,t) | + (1+|y|) |\nabla_y \phi(y,t)|+(1+|y|)^2 |\nabla^2_y \phi(y,t)|}{ \lambda_*^{\nu_1}(t) \max\left\{\frac{R^{\delta(5-a_1)}}{(1+|y|)^3} , \frac{1}{(1+|y|)^{a_1-2} }\right\}}
\end{align*}
with $\nu_1\in(0,1)$, $a_1\in(2,3)$, $\delta>0$ fixed small.

\item We use the norm $\|\cdot\|_{{\rm in},\nu_2,a_2-2}$ to measure the solution $\phi_2$ solving \eqref{eqphi2}, where
$$
\|\phi\|_{{\rm in},\nu_2,a_2-2}  =
\sup_{\mathcal D_{2R}} \  \frac{| \phi(y,t) | + (1+|y|) |\nabla_y \phi(y,t)|+(1+|y|)^2 |\nabla^2_y \phi(y,t)|}{ \lambda_*^{\nu_2}(t) (1+|y|)^{2-a_2}}
$$
with $\nu_2\in(0,1)$, $a_2\in(2,3)$.

\item We use the norm $\|\cdot\|_{**,\nu_3}$ to measure the solution $\phi_3$ solving \eqref{eqphi3}, where
\begin{align*}
\|\phi\|_{**,\nu_3}
= \sup_{\mathcal D_{2R}} \
\frac{ |\phi(y,t)| + (1+|y|)\left |\nabla_y \phi(y,t)\right | +(1+|y|)^2 |\nabla^2_y \phi(y,t)|}
{ \la^{\nu_3}_*(t)  R^{2}(t) ( 1+|y| )^{-1}  }
\end{align*}
with $\nu_3>0$.

\item We use the norm $\|\cdot\|_{***,\nu_4}$ to measure the solution $\phi_4$ solving \eqref{eqphi4}, where
\begin{align}
\nonumber
\|\phi\|_{***,\nu_4}
= \sup_{\mathcal D_{2R}} \
\frac{ |\phi(y,t)| + (1+|y|)\left |\nabla_y \phi(y,t)\right |+(1+|y|)^2 |\nabla^2_y \phi(y,t)| }
{ \la^{\nu_4}_*(t)  }
\end{align}
with $\nu_4>0$.

\end{itemize}

\medskip

Based on the linear theory in Section \ref{sec-ltouter}, we shall solve the outer problem \eqref{eqn-douter} in the following norms.

\medskip

\begin{itemize}
\item We use the norm $\|\cdot\|_{**}$ defined in \eqref{defNormRHSpsi} to measure the right hand side $\mathcal G$ in the outer problem \eqref{eqn-douter}.

\item We use the norm $\| \cdot\|_{\sharp, \Theta,\gamma}$ defined in \eqref{normPsi} to measure the solution $\psi$ solving the outer problem \eqref{eqn-douter}, where $\Theta>0$ and $\gamma\in(0,1/2)$.
\end{itemize}

\medskip

Based on the linear theory developed in Section \ref{sec-SS}, we shall solve the incompressible Navier--Stokes equation \eqref{eqn-NS} in the following norms.

\medskip

\begin{itemize}
\item We use the norm $\|\cdot\|_{S,\nu-2,a+1}$ defined in \eqref{def-normSSF} to measure the forcing $\mathcal F$, where $\nu>0$ and $a\in(1,2)$.

\item We use the norm $\|\cdot\|_{S,\nu-1,1}$ defined in \eqref{def-normSSF} to measure the velocity field $v$ solving problem \eqref{eqn-NS}, where $\nu>0$.
\end{itemize}

\medskip

We then define
\begin{align*}
E_1 &= \{ \phi_1 \in L^\infty(\mathcal D_{2R}) :
\nabla_y \phi_1  \in L^\infty(\mathcal D_{2R}), \
\|\phi_1\|_{*,\nu_1,a_1,\delta} <\infty \}
\\
E_2 &= \{ \phi_2 \in L^\infty(\mathcal D_{2R}) :
\nabla_y \phi_2  \in L^\infty(\mathcal D_{2R}), \
\|\phi_2\|_{{\rm in},\nu_2,a_2-2}<\infty \}
\\
E_3 &= \{ \phi_3 \in L^\infty(\mathcal D_{2R}) :
\nabla_y \phi_3  \in L^\infty(\mathcal D_{2R}), \
\|\phi_3\|_{**,\nu_3} < \infty \}
\\
E_4 &= \{ \phi_4 \in L^\infty(\mathcal D_{2R}) :
\nabla_y \phi_4  \in L^\infty(\mathcal D_{2R}), \
\|\phi_4\|_{***,\nu_4} <\infty \}
\end{align*}
and use the notation
\begin{equation*}
E_{\phi} = E_1\times E_2 \times E_3 \times E_4,\quad \Phi = ( \phi_1,\phi_2,\phi_3,\phi_4) \in E_{\phi}
\end{equation*}
$$
\|\Phi\|_{E_{\phi}} =
\|\phi_1\|_{*,\nu_1,a_1,\delta}
+\|\phi_2\|_{{\rm in},\nu_2,a_2-2}
+\|\phi_3\|_{**,\nu_3}
+\|\phi_4\|_{***,\nu_4}.
$$
We define the closed ball
\[
\mathcal B = \{  \Phi \in E_{\phi} : \| \Phi\|_{E_{\phi}} \leq 1 \} .
\]
For the outer problem \eqref{eqn-douter}, we shall solve $\psi$ in the space
\begin{equation*}
E_{\psi}=\left\{\psi\in L^{\infty}(\Omega\times(0,T)):\|\psi\|_{\sharp,\Theta,\gamma}<\infty\right\}.
\end{equation*}
For the incompressible Navier--Stokes equation \eqref{eqn-NS},
we shall solve the velocity field $v$ in the space
\begin{equation}\label{class-v}
E_v=\left\{v\in L^2(\Omega;\R^2): \nabla\cdot v=0,~\|v\|_{S,\nu-1,1}<M \epsilon_0\right\},
\end{equation}
where $\epsilon_0>0$ is the number in \eqref{LCF} which is fixed sufficiently small, and $M>0$ is some fixed number.

To introduce the space for the parameter function $p(t)$,
we recall the  integral operator $\mathcal B_0$  defined in \eqref{defB0-new} of the approximate form
\begin{align*}
\mathcal B_0[p ]
=   \int_{-T} ^{t-\la^2}    \frac{\dot p(s)}{t-s}ds\, + O\big( \|\dot p\|_\infty \big).
\end{align*}
For  $\Theta\in (0,1)$, $l\in \R$ and a continuous function $g:I\to \mathbb C$, we define the norm
\begin{align}
\nonumber
\|g\|_{\Theta,l} = \sup_{t\in [-T,T]} \, (T-t)^{-\Theta} |\log(T-t)|^{l} |g(t)| ,
\end{align}
and for  $\gamma \in (0,1)$, $m \in (0,\infty) $, $l \in \R$, we define the semi-norm
\begin{align}
\nonumber
[ g]_{\gamma,m,l} = \sup \, (T-t)^{-m}  |\log(T-t)|^{l} \frac{|g(t)-g(s)|}{(t-s)^\gamma} ,
\end{align}
where the supremum is taken over $s \leq t$ in $[-T,T]$  such that $t-s \leq \frac{1}{10}(T-t)$.

\medskip

The following result was proved in \cite[Section 8]{17HMF}.

\begin{prop}
\label{keyprop}
Let $\alpha ,  \gamma \in (0,\frac{1}{2})$, $l\in \R$, $C_1>1$.
If $\alpha_0\in(0,1]$, $\Theta \in (0,\alpha_0)$,
$m \in(0, \Theta - \gamma]$,
and  $a(t) :[0,T]\to \mathbb C$ satisfies
\begin{align}
\label{hypA00}
\left\{
\begin{aligned}
& \frac{1}{C_1} \leq | a(T) | \leq C_1 ,
\\
& T^\Theta |\log T|^{1+\sigma-l} \| a(\cdot) - a(T) \|_{\Theta,l-1}
+ [a]_{\gamma,m,l-1}
\leq C_1 ,
\end{aligned}
\right.
\end{align}
for some $\sigma>0$,
then for $T>0$ sufficiently small there exist two operators $\mathcal P $ and $\RR_0$ so that $p = \mathcal P[a]: [-T,T]\to \mathbb C$ satisfies
\begin{align*}
\mathcal B_0[p](t)
= a(t) + \RR_0[a](t) , \quad t \in [0,T]
\end{align*}
with
\begin{align}
\nonumber
& |\RR_0[a](t) | \leq  C
\Bigl( T^{\sigma}
+ T^\Theta  \frac{\log |\log T|}{|\log T|}  \| a(\cdot) - a(T) \|_{\Theta,l-1}
+ [a]_{\gamma,m,l-1} \Bigr)
\frac{(T-t)^{m+(1+\alpha ) \gamma}}{  |\log(T-t)|^{l}} ,
\end{align}
for some $\sigma>0$.
\end{prop}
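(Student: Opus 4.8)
The plan is to invert the leading‑order part of $\mathcal B_0$ explicitly and to absorb everything else into the remainder operator $\RR_0$, following the scheme developed for the model operator $\int_{-T}^{t-\la^2}\frac{\dot p(s)}{t-s}\,ds$. First I would isolate the principal part of $\mathcal B_0$. Using $\mathcal B_0[p](t)=\int_{-T}^{t-\la^2(t)}\frac{\dot p(s)}{t-s}\,ds+O(\|\dot p\|_\infty)$ from \eqref{defB0-new} and splitting the integral at $s=t-(T-t)$ exactly as in the computation preceding \eqref{def-lalala}, one writes
\[
\mathcal B_0[p](t)=\mathcal L[p](t)+\mathcal E[p](t),\qquad
\mathcal L[p](t):=\int_{-T}^{t}\frac{\dot p(s)}{T-s}\,ds-\dot p(t)\log(T-t),
\]
where $\mathcal E[p]$ collects four contributions: the near‑diagonal remainder $\int_{t-(T-t)}^{t-\la^2}\frac{\dot p(s)-\dot p(t)}{t-s}\,ds$, the discrepancy between $-2\log\la(t)$ and $-\log(T-t)$ coming from $\la=\la_*(1+o(1))$ (Section~\ref{sec-redu}), the error of replacing $t-s$ by $T-s$ and $t-(T-t)$ by $t$ off the diagonal, and the $O(\|\dot p\|_\infty)$ term in the definition of $\mathcal B_0$. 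The point of the splitting is that $\mathcal L$ obeys the exact first‑order identity
\[
\frac{d}{dt}\mathcal L[p](t)=-\frac{1}{\log(T-t)}\,\frac{d}{dt}\!\left(\log^2(T-t)\,\dot p(t)\right),
\]
which is the same computation that makes $\mathcal L[\la_*]$ asymptotically constant.

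Next I would invert $\mathcal L$. Given $a$, integrate the identity once,
\[
\log^2(T-t)\,\dot p(t)=C+\int_t^T\log(T-s)\,a'(s)\,ds ,
\]
the last integral being convergent at $s=T$ after one integration by parts with $a'=(a-a(T))'$ (the boundary term $\log(T-s)(a(s)-a(T))$ vanishes since $\Theta>0$, leaving $\int_t^T\frac{a(s)-a(T)}{T-s}\,ds$, finite because $\|a(\cdot)-a(T)\|_{\Theta,l-1}<\infty$); then put $p(t)=-\int_t^T\dot p(s)\,ds$, which forces $p(T)=0$ as required for $\la(T)=0$. The constant $C$ is fixed by the normalization $\mathcal L[p](T^-)=a(T)$, i.e. $\int_{-T}^T\frac{\dot p(s)}{T-s}\,ds=a(T)$; this is a linear scalar equation for $C$, solvable since $\int_{-T}^T\frac{ds}{(T-s)\log^2(T-s)}=|\log 2T|^{-1}\neq0$, and it yields $C\sim|\log T|\,a(T)$, so that $p$ is a genuine $\la_*$‑type profile precisely because $\tfrac1{C_1}\le|a(T)|\le C_1$. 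This defines the linear operator $\mathcal P$, and hypothesis~\eqref{hypA00} then gives $\|p\|_{\Theta,l-1}\lesssim C_1$ together with the matching bounds for $\dot p$ and its Hölder seminorm. Since $\la=|p|$ actually occurs inside $\mathcal B_0$ through the cutoff time $t-\la^2$, one runs this construction as a fixed point (equivalently, a continuity argument) in the a priori class $\la\sim\la_*$, which closes because $\mathcal E$ depends on $\la$ only through lower‑order terms.

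Finally, set $\RR_0[a]:=\mathcal B_0[\mathcal P[a]]-a=\mathcal E[\mathcal P[a]]+(\mathcal L[\mathcal P[a]]-a)$; the second bracket vanishes by construction, so $\RR_0[a]=\mathcal E[\mathcal P[a]]$, and one estimates it by feeding the bounds on $\mathcal P[a]$ into the four pieces of $\mathcal E$. Using $\gamma\in(0,\tfrac12)$, $\Theta\in(0,\alpha_0)$ and $m\in(0,\Theta-\gamma]$ to make the exponents and the logarithmic powers add up, this produces
\[
|\RR_0[a](t)|\le C\Bigl(T^{\sigma}+T^{\Theta}\tfrac{\log|\log T|}{|\log T|}\,\|a(\cdot)-a(T)\|_{\Theta,l-1}+[a]_{\gamma,m,l-1}\Bigr)\,\frac{(T-t)^{m+(1+\alpha)\gamma}}{|\log(T-t)|^{l}} .
\]

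The main obstacle is this last step, and in particular the two borderline pieces of $\mathcal E$. The near‑diagonal remainder $\int_{t-(T-t)}^{t-\la^2}\frac{\dot p(s)-\dot p(t)}{t-s}\,ds$ is exactly what forces the Hölder‑in‑time seminorm $[a]_{\gamma,m,l-1}$ into both the hypothesis and the conclusion, and controlling it in precisely the weighted form above requires tracking the gain $(T-t)^{(1+\alpha)\gamma}$. The $2\log\la(t)$‑versus‑$\log(T-t)$ term is where one must use the sharp asymptotics $\la=\la_*(1+o(1))$ from Section~\ref{sec-redu} to see that it costs only the factor $\log|\log T|/|\log T|$, which is the origin of that coefficient in the estimate for $\RR_0$. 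The off‑diagonal replacement and the $O(\|\dot p\|_\infty)$ term are comparatively routine and yield the $T^{\sigma}$ contribution.
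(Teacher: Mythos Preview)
The paper does not prove this proposition; it quotes it from \cite[Section 8]{17HMF} and only records the structural output of that proof, namely the decomposition $\mathcal P[a]=p_{0,\kappa}+\mathcal P_1[a]$ with
\[
p_{0,\kappa}(t)=\kappa\,|\log T|\int_t^T\frac{ds}{|\log(T-s)|^{2}},\qquad \|\mathcal P_1[a]\|_{*,3-\sigma}\le C|\log T|^{1-\sigma}\log^2(|\log T|).
\]
Your plan is precisely the scheme behind that cited argument: isolate the principal operator $\mathcal L[p]=\int_{-T}^t\frac{\dot p(s)}{T-s}\,ds-\dot p(t)\log(T-t)$, invert it via the exact identity $\tfrac{d}{dt}\mathcal L[p]=-\tfrac{1}{\log(T-t)}\tfrac{d}{dt}\bigl(\log^2(T-t)\,\dot p\bigr)$, and push the four residual pieces into $\RR_0$. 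Your explicit inversion reproduces the paper's decomposition exactly (your constant $C$ is $-\kappa|\log T|$, and your integral term $\log^{-2}(T-t)\int_t^T\log(T-s)\,a'(s)\,ds$ is $\dot{\mathcal P}_1[a]$), so at the level of strategy you are aligned with the reference.

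What remains genuinely nontrivial, and what your proposal correctly flags but does not carry out, is the weighted estimate on $\RR_0$. Two points deserve care. First, the near-diagonal piece $\int_{t-(T-t)}^{t-\la^2}\frac{\dot p(s)-\dot p(t)}{t-s}\,ds$ has to be bounded not just by $[\dot p]_\gamma$ but by the \emph{weighted} H\"older seminorm $[a]_{\gamma,m,l-1}$ with the specific gain $(T-t)^{m+(1+\alpha)\gamma}|\log(T-t)|^{-l}$; getting the extra factor $(T-t)^{\alpha\gamma}$ requires exploiting that $\la^2\ll (T-t)$ and that $\dot p$ itself carries a H\"older modulus inherited from $a$, and this is where the constraint $m\le\Theta-\gamma$ enters. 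Second, your fixed-point remark about $\la=|p|$ appearing in the cutoff $t-\la^2$ is correct in spirit, but in the cited proof this is handled by \emph{freezing} $\la=\la_*$ in the definition of $\mathcal B_0$ at first approximation (since $p=p_{0,\kappa}+p_1$ with $|p_1|\ll|p_{0,\kappa}|$), and then absorbing the discrepancy $\log\la^2-\log\la_*^2$ into the $\log|\log T|/|\log T|$ factor; a genuine contraction argument on $\la$ is not needed. With those two refinements your outline becomes the proof in \cite{17HMF}.
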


Proposition~\ref{keyprop} gives an approximate inverse $\mathcal P$  of the operator $\mathcal B_0$, so that given $a(t)$ satisfying \eqref{hypA00},  $ p := \mathcal P  \left[ a \right] $
satisfies
\[
\mathcal B_0[ p ]   = a +\RR_0[ a] , \quad \text{in }~[0,T],
\]
for a small remainder $\RR_0[ a]$. Moreover, the proof of Proposition \ref{keyprop} in \cite{17HMF} gives the decomposition
\begin{align*}
\mathcal P[a] = p_{0,\kappa} + \mathcal P_1[a],
\end{align*}
with
\begin{align}
\nonumber
p_{0,\kappa}(t) =
\kappa |\log T|
\int_t^T \frac{1}{|\log(T-s)|^2}
\,ds
, \quad  t\leq T  ,
\end{align}
$\kappa = \kappa[a] \in \mathbb C$,  and the function $ p_1 = \mathcal P_1[a]$ has the estimate
\[
\| p_1 \|_{*,3-\sigma} \leq C |\log T|^{1-\sigma} \log^2(|\log T|) .
\]
Here the semi-norm $\| \ \|_{*,3-\sigma}$ is defined by
\begin{align}
\nonumber
\|g\|_{*,3-\sigma} = \sup_{t\in [-T,T]}  |\log(T-t)|^{3-\sigma} |\dot g(t)|,
\end{align}
and $\sigma \in (0,1)$.
This leads us to define the space
\begin{equation*}
 X_{p} := \{  p_1 \in C([-T,T;\mathbb C]) \cap C^1([-T,T;\mathbb C]) \ : \ p_1(T) = 0 , \ \|p_1\|_{*,3-\sigma}<\infty \} ,
\end{equation*}
where we represent $p$ by the pair $(\kappa,p_1)$ in the form $p = p_{0,\kappa} + p_1$.

We define the space for $\xi(t)$ as
\begin{equation*}
X_{\xi}=\left\{\xi\in C^1((0,T);\R^2):\dot\xi(T)=0,\|\xi\|_{X_{\xi}}<\infty\right\}
\end{equation*}
where
\begin{equation*}
\|\xi\|_{X_{\xi}}=\|\xi\|_{L^{\infty}(0,T)}+\sup_{t\in(0,T)} \la_*^{-\sigma}(t)|\dot\xi(t)|
\end{equation*}
for some $\sigma\in(0,1)$, and we define the spaces for $\alpha(t)$, $\beta(t)$ as follows
\begin{equation*}
X_{\alpha}=\left\{\xi\in C^1((0,T)):\alpha(T)=0,\|\alpha\|_{X_{\alpha}}<\infty\right\}
\end{equation*}
where
\begin{equation*}
\|\alpha\|_{X_{\alpha}}=\sup_{t\in(0,T)} \la_*^{-\delta_1}(t)|\alpha(t)|+\sup_{t\in(0,T)} \la_*^{1-\delta_1}(t)|\dot\alpha(t)|
\end{equation*}
and
\begin{equation*}
X_{\beta}=\left\{\beta\in C^1((0,T)):\beta(T)=0,\|\beta\|_{X_{\beta}}<\infty\right\}
\end{equation*}
where
\begin{equation*}
\|\beta\|_{X_{\beta}}=\sup_{t\in(0,T)} \la_*^{-\delta_2}(t)|\beta(t)|+\sup_{t\in(0,T)} \la_*^{1-\delta_2}(t)|\dot\beta(t)|.
\end{equation*}
 Here $\delta_1,\delta_2\in(0,1)$.

In conclusion, we will solve the inner--outer gluing system \eqref{eqn-NS}, \eqref{eqn-douter}, \eqref{eqphi1}, \eqref{eqphi2}, \eqref{eqphi3}, \eqref{eqphi4}, \eqref{redueqn-m0}, \eqref{redueqn-m1} and \eqref{redueqn-m-1} in the space
\begin{equation}\label{def-mX}
\mathcal X= E_v \times E_{\psi} \times E_{\phi} \times X_p\times X_{\xi}\times X_{\alpha}\times X_{\beta}
\end{equation}
by means of fixed point argument.

\medskip

\subsection{Estimates of the orientation field \texorpdfstring{$u$}{u}}\label{sec-d}

\medskip

The equation for the orientation field $u$ is close in spirit to the harmonic map heat flow \eqref{HMF}. To get the desired blow-up, we only need to show the drift term $v\cdot \nabla u$ is a small perturbation in the topology chosen above. Then the construction of the orientation field $u$ is a direct consequence of \cite{17HMF} with slight modifications.

\medskip

\noindent{{\bf Effect of the drift term $v\cdot \nabla u$ in the outer problem}}

\medskip

In the outer problem \eqref{eqn-douter}, it is direct to see that the main contribution in the drift term $v\cdot \nabla u$ comes from $v\cdot\nabla U$ since all the other terms are of smaller orders. We get that for some positive constant $\epsilon$,
\begin{equation}\label{est-vinouter}
\begin{aligned}
\left|(1-\eta_R)v\cdot\nabla u\right|=&~\bigg|(1-\eta_R)v\cdot \nabla U+(1-\eta_R) v\cdot \nabla\left( \Pi_{U^{\perp}}\left[\eta_R Q_{\omega,\alpha,\beta} \phi  +\Psi^* +\Phi^0 +\Phi^{\alpha} +\Phi^{\beta} \right]\right)\\
&~+(1-\eta_R) v\cdot \nabla\left( a\left(\Pi_{U^{\perp}}\left[\eta_R Q_{\omega,\alpha,\beta} \phi  +\Psi^* +\Phi^0 +\Phi^{\alpha} +\Phi^{\beta} \right]\right)U\right)\bigg|\\
\lesssim &~|(1-\eta_R)v\cdot\nabla U|\\
\lesssim &~\frac{\la_*^{\nu-1}(t)\|v\|_{S,\nu-1,1}}{1+\left|\frac{x-q}{\la_*(t)}\right|}\frac{\la_*(t)}{|x-q|^2+\la_*^2(t)}\chi_{ \{  |x-q| \geq  \lambda_*(t) R(t) \}}\\
\lesssim&~ T^{\epsilon} \varrho_2
\end{aligned}
\end{equation}
provided $\nu>m$ with $m\in(1/2,1)$ obtained in Lemma \ref{lemmaHeat6}, where $\varrho_2$ is the weight of the $\|\cdot\|_{**}$-norm (see \eqref{weights}) for the right hand side of the outer problem. Therefore, as long as $\nu$ is chosen sufficiently close to $1$, the influence of the drift term $v\cdot \nabla u$ in the outer problem is negligible, and it is indeed a perturbation compared to the rest terms already estimated in the harmonic map heat flow \cite[Section 6.6]{17HMF}.

\medskip

\noindent{{\bf Effect of the drift term $v\cdot \nabla u$ in the inner problem}}

\medskip

Since the inner problem is decomposed into different modes \eqref{eqphi1}--\eqref{eqphi4}, the drift term $v\cdot \nabla u$ will get coupled in each mode. We now analyze the projections of $v\cdot \nabla u$ on different modes. Recall that
$$v\cdot\nabla u=v\cdot \nabla [U+\varphi_{in}+\Pi_{U^{\perp}} \varphi_{out}+a(\Pi_{U^{\perp}}(\varphi_{in}+\varphi_{out}))U]$$
where
$$\varphi_{in}=\eta_R Q_{\omega,\alpha,\beta}(\phi_1+\phi_{2}+\phi_{3}+\phi_{4}),\quad \varphi_{out}=\Psi^*+\Phi^0+\Phi^{\alpha}+\Phi^{\beta}.$$
Notice that the leading term in $v\cdot\nabla u$ is $v\cdot\nabla U$.
Since $(v\cdot \nabla U,U)=0$, we have
$$\Pi_{U^{\perp}} (v\cdot \nabla U)=v\cdot \nabla U.$$
Denote $v=\begin{bmatrix} v_1\\ v_2\\ \end{bmatrix}$. We write $U$ in the polar coordinates
\begin{equation*}
\nabla U=\la^{-1}\begin{bmatrix}
\cos\theta w_{\rho} E_1-\frac{\sin\theta}{\rho}\sin w E_2\\
\sin\theta w_{\rho} E_1 +\frac{\cos\theta}{\rho}\sin w E_2\\
\end{bmatrix}.
\end{equation*}
Therefore, the projection of $v\cdot\nabla u$ on mode $k$ ($k\in\mathbb Z$) is of the following size
\begin{equation*}
\begin{aligned}
&\big|[\Pi_{U^\perp}(v\cdot\nabla u)]_k\big|\lesssim \big|[\Pi_{U^\perp}(v\cdot\nabla U)]_k\big|\lesssim\\
&\Bigg| \int_0^{2\pi}\left(v_1 \cos\theta \cos (k\theta) w_{\rho}+v_2\sin\theta \cos (k\theta) w_{\rho}+v_1\frac{\sin\theta \sin (k\theta)}{\rho}\sin w-v_2\frac{\cos\theta \sin (k\theta)}{\rho}\sin w\right)d\theta \\
&+i \int_0^{2\pi}\left( v_1 \cos\theta \sin (k\theta) w_{\rho}+v_2\sin\theta \sin (k\theta) w_{\rho}-v_1\frac{\sin\theta \cos (k\theta)}{\rho}\sin w+v_2\frac{\cos\theta \cos (k\theta)}{\rho}\sin w\right)d\theta \Bigg|\\
\end{aligned}
\end{equation*}
from which we obtain
\begin{equation}\label{proj-vininner}
\big|\la  [\Pi_{U^\perp}(v\cdot\nabla u)]_k\big|\leq \frac{M\epsilon_0\la_*^{\nu}}{1+|y|^3}
\end{equation}
where $M$ and $\epsilon_0$ are given in \eqref{class-v}.
Thus, it holds that
$$\|\la [\Pi_{U^\perp}(v\cdot\nabla u)]_k\|_{\nu,a}\leq  M\epsilon_0.$$
Since $\epsilon_0$ is a sufficiently small number, we find that the projection $[\Pi_{U^\perp}(v\cdot\nabla u)]_k$ can be regarded as a perturbation compared to the rest terms in the right hand sides of the inner problems \eqref{eqphi1}--\eqref{eqphi4}.

\medskip

In summary, the coupling of the drift term $v\cdot\nabla u$ in the inner and outer problems of the harmonic map heat flow is essentially negligible under the topology chosen above. Therefore, with slight modifications, the fixed point formulation for
$$\partial_t u+ v\cdot\nabla u=\Delta u+|\nabla u|^2u$$
can be carried out in a similar manner as in \cite{17HMF}.

For the outer problem \eqref{eqn-douter}, it was already estimated in \cite{17HMF} that in the space $\mathcal X$ defined in \eqref{def-mX}, it holds that for some $\epsilon>0$
\begin{equation*}
\begin{aligned}
\|\mathcal G[p,\xi,\alpha,\beta,\Psi^*,\phi,v]-(1-\eta_R)v\cdot \nabla u\|_{**}\lesssim&~ T^{\epsilon} (\|\Phi\|_{E_{\phi}}+\|\psi\|_{\sharp,\Theta,\gamma}+\|p\|_{X_{p}}+\|\xi\|_{X_{\xi}}\\
&~+\|\alpha\|_{X_{\alpha}}+\|\beta\|_{X_{\beta}}+1)
\end{aligned}
\end{equation*}
provided
\begin{equation}\label{choice-outer}
\left\{
\begin{aligned}
&0<\Theta<\min\left\{\gamma_*,\frac12-\gamma_*,\nu_1-1+\gamma_*(a_1-1),\nu_2-1+\gamma_*(a_2-1),\nu_3-1,\nu_4-1+\gamma_*\right\},\\
&\Theta<\min\left\{\nu_1-\delta\gamma_*(5-a_1)-\gamma_*,\nu_2-\gamma_*,\nu_3-3\gamma_*,\nu_4-\gamma_*\right\},\\
&\delta\ll 1.\\
\end{aligned}
\right.
\end{equation}
On the other hand, from \eqref{est-vinouter}, we find that
\begin{equation*}
\|(1-\eta_R)v\cdot \nabla u\|_{**}\lesssim T^{\epsilon} (\|v\|_{S,\nu-1,1}+\|\Phi\|_{E_{\phi}}+\|\psi\|_{\sharp,\Theta,\gamma}+\|p\|_{X_{p}}+\|\xi\|_{X_{\xi}}+\|\alpha\|_{X_{\alpha}}+\|\beta\|_{X_{\beta}}+1)
\end{equation*}
provided
\begin{equation}\label{choice-outer1}
\nu>\frac12.
\end{equation}
Therefore, we conclude the validity of the following proposition by Proposition \ref{prop3}.
\begin{prop}\label{prop-outerop}
Assume \eqref{choice-outer} and \eqref{choice-outer1} hold. If $T>0$ is sufficiently small, then there exists a solution $\psi=\Psi(v,\Phi,p,\xi,\alpha,\beta)$ to problem \eqref{eqn-douter} with
$$\|\Psi(v,\Phi,p,\xi,\alpha,\beta)\|_{\sharp,\Theta,\gamma}\lesssim T^{\epsilon} (\|v\|_{S,\nu-1,1}+\|\Phi\|_{E_{\phi}}+\|p\|_{X_{p}}+\|\xi\|_{X_{\xi}}+\|\alpha\|_{X_{\alpha}}+\|\beta\|_{X_{\beta}}+1),$$
for some $\epsilon>0$.
\end{prop}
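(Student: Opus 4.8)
The plan is to reduce Proposition~\ref{prop-outerop} to the linear heat estimate of Proposition~\ref{prop3}. Fix a tuple $(v,\Phi,p,\xi,\alpha,\beta)$ in the ball of the product space $\mathcal X$ defined in \eqref{def-mX}. Writing $\Psi^* = Z^* + \psi$, the outer equation \eqref{eqn-douter} becomes, for the unknown $\psi$, a heat equation $\partial_t \psi = \Delta_x \psi + \mathcal G[p,\xi,\alpha,\beta,Z^*+\psi,\phi,v]$ with boundary and initial data ${\bf e_3} - U - \Phi^0 - \Phi^\alpha - \Phi^\beta$ (minus the $Z^*$ data); these are smooth and, since $U|_{\partial\Omega},\Phi^\bullet|_{\partial\Omega}$ converge to $({\bf e_3},0)$ as $T\to 0$, they are removed exactly as in \cite{17HMF} by subtracting a fixed explicit small correction, so that only the interior source $\mathcal G$ remains. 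The map sending $\psi$ to the solution produced by the linear operator of Proposition~\ref{prop3} with source $\mathcal G[p,\xi,\alpha,\beta,Z^*+\psi,\phi,v]$ will then be shown to be a contraction on a ball of $E_\psi$, for $T$ small, and its fixed point is the desired $\Psi(v,\Phi,p,\xi,\alpha,\beta)$. (Note that \eqref{choice-outer} together with $\gamma_*\in(0,\tfrac12)$ from \eqref{choice-R} guarantees the hypothesis \eqref{assumpPar1} of Proposition~\ref{prop3}.)

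The heart of the argument is the estimate of $\mathcal G$ in the weighted norm $\|\cdot\|_{**}$ of \eqref{defNormRHSpsi}. Split $\mathcal G = \big(\mathcal G - (1-\eta_R)\,v\cdot\nabla u\big) + (1-\eta_R)\,v\cdot\nabla u$. The first summand is identical, term by term, to the right-hand side of the outer problem for the pure harmonic map heat flow, so the estimate obtained in \cite[Section 6.6]{17HMF} applies verbatim: under the exponent conditions \eqref{choice-outer} one has
\[
\|\mathcal G - (1-\eta_R)\,v\cdot\nabla u\|_{**} \lesssim T^{\epsilon}\big( \|\Phi\|_{E_\phi} + \|\psi\|_{\sharp,\Theta,\gamma} + \|p\|_{X_p} + \|\xi\|_{X_\xi} + \|\alpha\|_{X_\alpha} + \|\beta\|_{X_\beta} + 1 \big)
\]
for some $\epsilon>0$. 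The only genuinely new term is the drift $(1-\eta_R)\,v\cdot\nabla u$: in the outer region its dominant part is $(1-\eta_R)\,v\cdot\nabla U$, all remaining contributions being of strictly lower order, and combining the bound $\|v\|_{S,\nu-1,1}<\infty$ valid in the class $E_v$ with the explicit polar form of $\nabla U$ yields the pointwise estimate \eqref{est-vinouter}, namely $|(1-\eta_R)\,v\cdot\nabla u| \lesssim T^{\epsilon}\varrho_2$, provided $\nu$ is taken close enough to $1$ — in particular $\nu>\tfrac12$, which is \eqref{choice-outer1}, and more precisely $\nu>m$ with the exponent $m\in(\tfrac12,1)$ of Lemma~\ref{lemmaHeat6}. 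The crucial point is that the factor $\la_*(t)/(|x-q|^2+\la_*^2(t))$ carried by $\nabla U$ converts the $\|v\|_{S,\nu-1,1}$-decay of $v$ into precisely the weight $\varrho_2$ of \eqref{weights}, with a genuine power of $T$ to spare. Hence $\|(1-\eta_R)\,v\cdot\nabla u\|_{**} \lesssim T^{\epsilon}\|v\|_{S,\nu-1,1}$, and adding the two pieces gives
\[
\|\mathcal G\|_{**} \lesssim T^{\epsilon}\big( \|v\|_{S,\nu-1,1} + \|\Phi\|_{E_\phi} + \|\psi\|_{\sharp,\Theta,\gamma} + \|p\|_{X_p} + \|\xi\|_{X_\xi} + \|\alpha\|_{X_\alpha} + \|\beta\|_{X_\beta} + 1 \big).
\]

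Since $\mathcal G$ depends on $\psi$ only through $\Psi^* = Z^* + \psi$ and, applying the same inequalities to differences, this dependence is Lipschitz with constant $O(T^\epsilon)$ from $\|\cdot\|_{\sharp,\Theta,\gamma}$ into $\|\cdot\|_{**}$, Proposition~\ref{prop3} makes $\psi \mapsto \psi'$ a contraction for $T$ small; its fixed point $\Psi(v,\Phi,p,\xi,\alpha,\beta)$ then satisfies, after moving the $T^\epsilon\|\psi\|_{\sharp,\Theta,\gamma}$ term to the left,
\[
\|\Psi(v,\Phi,p,\xi,\alpha,\beta)\|_{\sharp,\Theta,\gamma} \lesssim T^{\epsilon}\big( \|v\|_{S,\nu-1,1} + \|\Phi\|_{E_\phi} + \|p\|_{X_p} + \|\xi\|_{X_\xi} + \|\alpha\|_{X_\alpha} + \|\beta\|_{X_\beta} + 1 \big),
\]
which is the assertion. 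I expect the main obstacle to be the bookkeeping behind the first summand: one must check that every term of $\mathcal G$ — the localized linearized terms $(1-\eta_R)\tilde L_U[\Psi^*]$, the quadratic interactions $N_U[\cdots]$, the mode $-1$ corrections $\Pi_{U^\perp}[\mathcal R_{-1}]$, and the transport-type terms $\la^{-1}\dot\la\, y\cdot\nabla_y\phi$ and $\la^{-1}\dot\xi\cdot\nabla_y\phi$ — is dominated by one of the three weights $\varrho_i$ of \eqref{weights} with a surplus power of $T$, which is exactly the role of the exponent inequalities \eqref{choice-outer} and of the cut-off radius choice \eqref{choice-R}; but this is inherited from \cite{17HMF}, and the only part that is new — the drift estimate \eqref{est-vinouter} — is comparatively soft, its whole content being that the extra decay carried by $\nabla U$ leaves room for a $T^\epsilon$ gain once $\nu$ is close to $1$.
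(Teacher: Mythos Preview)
Your proposal is correct and follows essentially the same route as the paper: split $\mathcal G$ into the drift piece $(1-\eta_R)v\cdot\nabla u$ and the remainder, invoke \cite[Section 6.6]{17HMF} for the latter under \eqref{choice-outer}, control the former by \eqref{est-vinouter} under \eqref{choice-outer1}, and then apply Proposition~\ref{prop3}. Your explicit mention of the contraction in $\psi$ and the handling of boundary/initial data makes the argument slightly more self-contained than the paper's terse ``we conclude the validity by Proposition~\ref{prop3}'', but the substance is the same.
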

We denote $\mathcal T_{\psi}$ by the operator which returns $\psi$ given in Proposition \ref{prop-outerop}.

For the inner problems \eqref{eqphi1}--\eqref{eqphi4}, our next step is to take $\Phi\in E_{\phi}$ and substitute  
$$\Psi^*(v,\Phi,p,\xi,\alpha,\beta) = Z^*+ \Psi(v,\Phi,p,\xi,\alpha,\beta)$$
into \eqref{eqn-dinner}. We can then write equations \eqref{eqphi1}--\eqref{eqphi4}
as the fixed point problem
\begin{align}
\label{ptofijo}
\Phi = \mathcal A (\Phi)
\end{align}
where
\begin{align*}
\mathcal A (\Phi) = ( \mathcal A_1(\Phi) ,  \mathcal A_2(\Phi) ,  \mathcal A_3(\Phi) , \mathcal A_4(\Phi)  ) , \quad \mathcal A : \bar{\mathcal B}_1\subset E \to E
\end{align*}
with
\begin{align*}
\mathcal A_1(\Phi) &=   \mathcal T_{1}  (
\mathcal H_1[v,\Psi^*(v,\Phi,p,\xi,\alpha,\beta),p,\xi, \alpha,\beta  ] )
\\
\mathcal A_2(\Phi) &=   \mathcal T_{2}  (
\mathcal H_2[v,\Psi^*(v,\Phi,p,\xi,\alpha,\beta),p,\xi, \alpha,\beta  ] )
\\
\mathcal A_3(\Phi) &=   \mathcal T_{3 }
\Bigl(
\mathcal H_3[v,\Psi^*(v,\Phi,p,\xi,\alpha,\beta),p,\xi, \alpha,\beta  ]+
\sum_{j=1}^2 c_{0j}^*[v,\Psi^*(v,\Phi,p,\xi,\alpha,\beta),p,\xi, \alpha,\beta  ] w_\rho^2 Z_{0,j}
\Bigr)
\\
\mathcal A_4(\Phi) &=   \mathcal T_{4 } \Bigl(\mathcal H_4[v,\Psi^*(v,\Phi,p,\xi,\alpha,\beta),p,\xi, \alpha,\beta  ]\Bigr)	  .
\end{align*}
Neglecting $\Pi_{U^{\perp}}(v\cdot\nabla u)$, the contraction for the inner problem was shown in \cite[Section 6.7]{17HMF} under the conditions
\begin{equation}\label{choice-inner}
\left\{
\begin{aligned}
&\nu_1<1\\
&\nu_2<1-\gamma_*(a_2-2)\\
&\nu_3<\min\left\{1+\Theta+2\gamma_*\gamma,\nu_1+\frac12 \delta\gamma_*(a_1-2)\right\}\\
&\nu_4<1\\
\end{aligned}
\right.
\end{equation}
On the other hand, from \eqref{proj-vininner}, we obtain
\begin{equation}\label{est-rhsproj}
\begin{aligned}
&\left\|\lambda  Q^{-1}_{\omega,\alpha,\beta} [\Pi_{U^{\perp}}(v\cdot\nabla u)]_0\right\|_{\nu_1,a_1}\leq M\epsilon_0 \la_*^{\nu-\nu_1}(t)\\
&\left\|\lambda  Q^{-1}_{\omega,\alpha,\beta} \left([\Pi_{U^{\perp}}(v\cdot\nabla u)]_1+[\Pi_{U^{\perp}}(v\cdot\nabla u)]_{\perp}\right)\right\|_{\nu_2,a_2}\leq M\epsilon_0 \la_*^{\nu-\nu_2}(t)\\
&\left\|\lambda  Q^{-1}_{\omega,\alpha,\beta} [\Pi_{U^{\perp}}(v\cdot\nabla u)]_{-1}\right\|_{\nu_4,a}\leq M\epsilon_0 \la_*^{\nu-\nu_4}(t)\\
\end{aligned}
\end{equation}
Recall that the parameter $\epsilon_0>0$ in \eqref{LCF} is fixed and sufficiently small. Therefore, by letting
\begin{equation}\label{choice-inner1}
\left\{
\begin{aligned}
&\nu=\nu_1=\nu_2=\nu_4\\
&1<a<2\\
\end{aligned}
\right.
\end{equation}
the smallness in \eqref{est-rhsproj} comes from $\epsilon_0\ll 1$. Applying the linear theory developed in Section \ref{sec-ltinner} for the inner problems \eqref{eqphi1}--\eqref{eqphi4}, we then conclude the following proposition.

\begin{prop}\label{prop-innerop}
Assume \eqref{choice-inner} and \eqref{choice-inner1} hold. If $T>0$ and $\epsilon_0>0$ are sufficiently small,   then the system of equations \eqref{ptofijo} for $\Phi=(\phi_1,\phi_2,\phi_3,\phi_4)$ has a solution $\Phi\in  E_{\phi}.$
\end{prop}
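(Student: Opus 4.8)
The plan is to solve the fixed point problem \eqref{ptofijo} for the map $\mathcal A=(\mathcal A_1,\mathcal A_2,\mathcal A_3,\mathcal A_4)$ on the closed ball $\mathcal B=\{\Phi\in E_\phi:\|\Phi\|_{E_\phi}\le1\}$, with $v\in E_v$ and the parameters $(p,\xi,\alpha,\beta)\in X_p\times X_\xi\times X_\alpha\times X_\beta$ regarded as given data (they are adjusted in an outer layer of the scheme via the reduced equations of Section~\ref{sec-redu} and Proposition~\ref{keyprop}). The first step is to eliminate the outer field: by Proposition~\ref{prop-outerop} one replaces $\Psi^*=Z^*+\Psi(v,\Phi,p,\xi,\alpha,\beta)$ everywhere it occurs, which turns each $\mathcal A_i$ into a genuine operator on $E_\phi$, with $\|\Psi^*\|_{\sharp,\Theta,\gamma}$ controlled by $T^\epsilon(\|\Phi\|_{E_\phi}+\|v\|_{S,\nu-1,1}+\cdots+1)$; the scalars $c^*_{0j},c_{1j},c_{-1,j}$ appearing in \eqref{eqphi1}--\eqref{eqphi4} are, by construction, linear functionals of the corresponding right-hand sides, so no further unknowns are introduced.

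Next I would estimate the right-hand sides $\mathcal H_1,\dots,\mathcal H_4$ in the weighted norms $\|\cdot\|_{\nu_i,a_i}$ of \eqref{norm-h}. The purely harmonic-map contributions — the Fourier components of $\lambda^2 Q^{-1}_{\omega,\alpha,\beta}\tilde L_U[\Psi^*]$, the terms $\mathcal K_0[p,\xi]$, $\mathcal K_1[p,\xi]$, $\Pi_{U^\perp}[\mathcal R_{-1}]$, and the nonlinear remainder $N_U$ — are bounded exactly as in \cite[Section~6.7]{17HMF}, producing a gain of a positive power of $T$ under the exponent conditions \eqref{choice-inner}; for $\tilde L_U[\Psi^*]$ this uses the decomposition into $[\tilde L_U]_0,[\tilde L_U]_1,[\tilde L_U]_2$ recorded in Section~\ref{sec-HMF} together with the $\|\cdot\|_{\sharp,\Theta,\gamma}$ bound for $\Psi^*$. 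The genuinely new contribution is the drift projection $\lambda Q^{-1}_{\omega,\alpha,\beta}[\Pi_{U^\perp}(v\cdot\nabla u)]_k$; here I invoke the pointwise estimate \eqref{proj-vininner}, which for $v\in E_v$ (so $\|v\|_{S,\nu-1,1}<M\epsilon_0$) and under \eqref{choice-inner1} yields the bounds \eqref{est-rhsproj} of size $M\epsilon_0$ in each relevant norm. Applying the linear theory of Section~\ref{sec-ltinner} mode by mode — Proposition~\ref{prop-lt} for the general decomposition, Lemma~\ref{lem-mode0} at mode $0$, the refined Lemma~\ref{lem-mode-1} at mode $-1$, and Lemma~\ref{lem-mode1} at mode $1$, with the orthogonality hypotheses automatically satisfied because the projections onto $w_\rho^2 Z_{k,j}$ are subtracted in \eqref{eqphi1}--\eqref{eqphi4} — one obtains $\|\mathcal A_i(\Phi)\|\lesssim T^\epsilon(\|\Phi\|_{E_\phi}+\cdots+1)+C\epsilon_0$. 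Choosing $T$ and $\epsilon_0$ small makes $\mathcal A$ map $\mathcal B$ into itself.

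To conclude, I would establish a Lipschitz (contraction) estimate: the maps $\mathcal H_i$, $\mathcal T_\psi$ and the reduced scalars all depend on $\Phi$ in a $C^1$ fashion with a small Lipschitz constant — $T^\epsilon$ from the harmonic-map part plus $C\epsilon_0$ from the drift, using that $u$, $a(\cdot)$, $\tilde L_U$ and $N_U$ are smooth functions of their arguments with the required decay — so that $\mathcal A$ is a contraction on $\mathcal B$ for $T,\epsilon_0$ sufficiently small, and the Banach fixed point theorem yields $\Phi\in E_\phi$. (Should uniform contraction fail for some term, one falls back on Schauder's theorem, using the Hölder gain of Lemma~\ref{lem-grad} and parabolic smoothing to get compactness of $\mathcal A$.)

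The main obstacle I anticipate is closing the inner–outer coupling with no uncompensated loss of powers of $R=\lambda_*^{-\gamma_*}$: $\Psi^*$ enters $\mathcal H_i$ through $\tilde L_U[\Psi^*]$, while $\mathcal G$ feeds $\phi$ back into the outer equation through the cut-off and transport terms $Q_{\omega,\alpha,\beta}(\phi\Delta_x\eta_R+2\nabla_x\eta_R\cdot\nabla_x\phi-\phi\partial_t\eta_R)$ and $\eta_R Q_{\omega,\alpha,\beta}(\lambda^{-1}\dot\lambda\,y\cdot\nabla_y\phi+\lambda^{-1}\dot\xi\cdot\nabla_y\phi-(Q^{-1}_{\omega,\alpha,\beta}\tfrac{d}{dt}Q_{\omega,\alpha,\beta})\phi)$. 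Traversing the chain $\|\cdot\|_{\sharp,\Theta,\gamma}\to\|\cdot\|_{**}\to\|\cdot\|_{\nu_i,a_i}$ so that every stray factor of $R$ is reabsorbed is exactly what the compatibility conditions \eqref{choice-outer}, \eqref{choice-outer1}, \eqref{choice-inner}, \eqref{choice-inner1} enforce; moreover the mode $-1$ estimate must be the sharp one of Lemma~\ref{lem-mode-1} (without the $\log R$ factor of Lemma~\ref{lm-mode-1}) so that the $\alpha,\beta$-generated error $\mathcal E_{-1}$ and the mode $-1$ piece of the drift remain within the $\lambda_*^{\nu_4}$ budget. Verifying this bookkeeping, and keeping the $\epsilon_0$-dependence uniform throughout, is the delicate core of the argument.
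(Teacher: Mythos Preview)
Your proposal is correct and follows essentially the same route as the paper: one substitutes $\Psi^*=Z^*+\Psi(v,\Phi,p,\xi,\alpha,\beta)$ from Proposition~\ref{prop-outerop}, bounds the harmonic-map part of each $\mathcal H_i$ exactly as in \cite[Section~6.7]{17HMF} (which the paper explicitly cites for the contraction estimate under \eqref{choice-inner}), controls the drift contribution by \eqref{proj-vininner}--\eqref{est-rhsproj} with smallness coming from $\epsilon_0$ under \eqref{choice-inner1}, and then invokes the mode-by-mode linear theory of Section~\ref{sec-ltinner} to close the contraction on $\mathcal B$. The paper's own ``proof'' is precisely this outline, deferring the detailed bookkeeping to \cite{17HMF}; your identification of the sharp mode $-1$ estimate (Lemma~\ref{lem-mode-1} rather than Lemma~\ref{lm-mode-1}) as the key technical refinement needed here is exactly right.
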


We denote by $\mathcal T_{p}$, $\mathcal T_{\xi}$, $\mathcal T_{\alpha}$ and $\mathcal T_{\beta}$ the operators which return the parameter functions $p(t)$, $\xi(t)$, $\alpha(t)$, $\beta(t)$, respectively. The argument for adjusting the parameter functions such that \eqref{redueqn-m0}--\eqref{redueqn-m-1} hold is essentially similar to that of \cite{17HMF}. Note that the influence of the coupling $v\cdot\nabla u$ is negligible as shown in Section \ref{sec-d}. Therefore, the leading orders for the parameter functions $p(t)$, $\xi(t)$, $\alpha(t)$, $\beta(t)$ are the same as in Section \ref{sec-redu}. The reduced problem \eqref{redueqn-m0} yields an integro-differential equation for $p(t)$ which can be solved by the same argument as in \cite{17HMF}, while the reduced problems \eqref{redueqn-m1}--\eqref{redueqn-m-1} give relatively simpler equations for $\xi(t)$, $\alpha(t)$, $\beta(t)$, which can be solved by the fixed point argument. We omit the details.

\medskip

\subsection{Estimates of the velocity field \texorpdfstring{$v$}{v}}\label{sec-v}

\medskip

To solve the incompressible Navier--Stokes equation \eqref{eqn-NS}, we need to analyze the coupled forcing term
$$\epsilon_0\na\cdot(\na u\odot\na u-1/2|\na u|^2\mathbb{I}_2).$$
Observe that the main contribution in the forcing comes from $U+\eta_R Q_{\omega,\alpha,\beta}(\phi_0+\phi_1+\phi_{-1}+\phi_{\perp}),$
where $\phi_0$, $\phi_1$, $\phi_{-1}$, $\phi_{\perp}$ are in mode $0$, $1$, $-1$ and higher modes, respectively. From the linear theory in Section \ref{sec-ltinner}, the dominant terms are $U$ and $\phi_0$. So we next need to evaluate
$$\na\cdot(\na U\odot\na U-1/2\,|\na U|^2\,\mathbb{I}_2)~\mbox{ and }~\na\cdot(\na U\odot\na \phi_0-1/2\,(\na U:\na\phi_0)\,\mathbb{I}_2),$$
where $\na U:\na\phi_0=\sum_{ij}\pd_iU_j\pd_i(\phi_0)_j$. Recall
\[U(y)=\begin{bmatrix}e^{i\theta}\sin w(\rho)\\\cos w(\rho)\end{bmatrix},~~E_1(y)=\begin{bmatrix}e^{i\theta}\cos w(\rho)\\ -\sin w(\rho)\\ \end{bmatrix},~~E_2(y)=\begin{bmatrix}i e^{i\theta}\\ 0\\ \end{bmatrix}\]
so that
\EQN{
\pd_\rho U=w_\rho E_1,&~~\pd_\theta U=\sin w E_2,\\
\pd_\rho E_1=-w_\rho U,&~~\pd_\theta E_1=\cos w E_2.
}
Note that
\[\na\cdot(\na U\odot\na U-1/2\,|\na U|^2\,\mathbb{I}_2)=\De U\cdot\na U=-|\na U|U\cdot\na U=0.\]
For $\na\cdot(\na U\odot\na \phi_0-1/2\,(\na U:\na\phi_0)\,\mathbb{I}_2)$, we express the forcing in the polar coordinates. Since $\phi_0=\varphi E_1$ where $\varphi_0=\varphi_0(\rho)$, the first component
\EQN{
&~\quad(\la^{-3}\na\cdot(\na U\odot\na\phi_0))_1\\
&=\na_y\cdot(\na_yU\odot\na_y\phi_0)_1\\
&=\pd_{y_1}\left(\cos^2\theta\,\pd_\rho\varphi_0\,w_\rho+\frac{\sin^2\theta}{\rho^2}\,\varphi_0\sin w\cos w\right)\\
&\quad+\pd_{y_2}\left(\sin\theta\cos\theta\,\pd_\rho\varphi_0\,w_\rho-\frac{\sin\theta\,\cos\theta}{\rho^2}\,\varphi_0\sin w\cos w\right).
}
Changing $\pd_{y_1}$ and $\pd_{y_2}$ into $\pd_\rho$ and $\pd_\theta$, we obtain
\EQN{
&~\quad(\la^{-3}\na\cdot(\na U\odot\na\phi_0))_1\\
&=\cos\theta\,\pd_\rho\left(\cos^2\theta\,\pd_\rho\varphi_0\,w_\rho+\frac{\sin^2\theta}{\rho^2}\,\varphi_0\sin w\cos w\right)\\
&\quad-\frac{\sin\theta}{\rho}\,\pd_\theta\left(\cos^2\theta\,\pd_\rho\varphi_0\,w_\rho+\frac{\sin^2\theta}{\rho^2}\,\varphi_0\sin w\cos w\right)\\
&\quad+\sin\theta\,\pd_\rho\left(\sin\theta\cos\theta\,\pd_\rho\varphi_0\,w_\rho-\frac{\sin\theta\cos\theta}{\rho^2}\,\varphi_0\sin w\cos w\right)\\
&\quad+\frac{\cos\theta}{\rho}\,\pd_\theta\left(\sin\theta\cos\theta\,\pd_\rho\varphi_0\,w_\rho-\frac{\sin\theta\cos\theta}{\rho^2}\,\varphi_0\sin w\cos w\right)\\
&=\cos\theta\left(\pd_\rho^2\varphi_0\,w_\rho+\pd_\rho\varphi_0\,w_{\rho\rho}+\frac1\rho\,\pd_\rho\varphi_0\,w_\rho-\frac1{\rho^3}\,\varphi_0\sin w\cos w\right)\\
&=\cos\theta\,\left[\pd_\rho\left(\pd_\rho\varphi_0\,w_\rho+\frac{\varphi_0\,w_\rho}\rho+\int\varphi_0w_\rho^2\right)\right].
}
A similar calculation implies that the second component
\EQN{
(\la&^{-3}\na\cdot(\na U\odot\na\phi_0))_2=\sin\theta\,\left[\pd_\rho\left(\pd_\rho\varphi_0\,w_\rho+\frac{\varphi_0\,w_\rho}\rho+\int\varphi_0w_\rho^2\right)\right].
}
So $\na\cdot(\na U\odot\na\phi_0)=\na\left[\la^3(\pd_\rho\varphi_0\,w_\rho+\frac{\varphi_0\,w_\rho}\rho+\int\varphi_0w_\rho^2)\right]$ is a potential. Moreover, it is obvious that $\na\cdot(|\na U|^2\,\mathbb{I}_2)$ is a potential. Therefore, $\na\cdot(\na U\odot\na \phi_0-1/2\,(\na U:\na\phi_0)\,\mathbb{I}_2)$ is a potential, which can be absorbed in the pressure $P$ in problem \eqref{eqn-NS}.

Therefore, the leading term in $\mathcal F$ defined by \eqref{def-mF} is
\begin{equation}\label{mainforcing}
|\na U\odot\na \phi_{-1}|\leq \frac{\la^{\nu_4-2}(t)}{1+|y|^3}\|\phi_{-1}\|_{***,\nu_4},
\end{equation}
from which we conclude that
\begin{equation*}
\|\epsilon_0\na U\odot\na \phi_{-1}\|_{S,\nu-2,a+1}\leq \epsilon_0,
\end{equation*}
where we have used \eqref{choice-inner1}.

On the other hand, as mentioned in Remark \ref{rmk-SS}, the nonlinear term $v\cdot\nabla v$ in \eqref{eqn-NS} is of smaller order compared to the forcing $\epsilon_0\nabla \cdot \mathcal F$ if we look for a solution $v$ in the function space $E_v$ defined in \eqref{class-v}. Indeed, since $v\in E_v$, we have
\begin{equation*}
|v\cdot\nabla v|\lesssim \frac{\la_*^{2\nu-2}(t)}{1+|y|^3}
\end{equation*}
so that
\begin{equation*}
\left\|v\cdot\nabla v\right\|_{S,\nu-2,a+1}\lesssim \la_*^{\nu}(t)\ll 1~\mbox{ as }~t\to T.
\end{equation*}
Thus, the incompressible Navier--Stokes equation \eqref{eqn-NS} can be regarded as a perturbed Stokes system
\begin{equation*}
\partial_t v+\nabla P=\Delta v -\epsilon_0\nabla\cdot \mathcal F_1[p,\xi,\alpha,\beta,\Psi^*,\phi,v]
\end{equation*}
with
\begin{equation*}
\mathcal F_1[p,\xi,\alpha,\beta,\Psi^*,\phi,v]=\mathcal F[p,\xi,\alpha,\beta,\Psi^*,\phi,v]+v\otimes v,
\end{equation*}
where we have used the fact that $v$ is divergence-free so that we can write $v\cdot \nabla v=\nabla\cdot(v\otimes v)$.
We denote $\mathcal T_v$ by the operator which returns the solution $v$, namely
$$\mathcal T_v: E_v \rightarrow E_v$$
$$v\mapsto \mathcal T_v(v).$$
By \eqref{mainforcing} and the linear theory for the Stokes system developed in Section \ref{sec-SS}, we obtain
\begin{equation}\label{est-vvv}
\|\mathcal T_v(v)\|_{S,\nu-1,1}\leq C\epsilon_0\big( \|v\|_{S,\nu-1,1}+\|\Phi\|_{E_{\phi}}+\|\psi\|_{\sharp,\Theta,\gamma}+\|p\|_{X_p}+\|\xi\|_{X_{\xi}}+\|\alpha\|_{X_{\alpha}}+\|\beta\|_{X_{\beta}}+1\big).
\end{equation}

\medskip

\subsection{Proof of Theorem 1.1}

\medskip

Consider the operator
\begin{equation}\label{def-finaloperator}
\mathcal T=(\mathcal A,\mathcal T_{\psi},\mathcal T_{v},\mathcal T_{p},\mathcal T_{\xi},\mathcal T_{\alpha},\mathcal T_{\beta})
\end{equation}
defined in Section \ref{sec-d} and Section \ref{sec-v}. To prove Theorem \ref{thm}, our strategy is to show that the operator $\mathcal T$ has a fixed point in $\mathcal X$ by the Schauder fixed point theorem. Here the function space $\mathcal X$ is defined in \eqref{def-mX}. The existence of a fixed point in the desired space $\mathcal X$ follows from a similar manner as in \cite{17HMF}.

By collecting Proposition \ref{keyprop}, Proposition \ref{prop-outerop}, Proposition \ref{prop-innerop} and \eqref{est-vvv}, we conclude that the operator maps $\mathcal X$ to itself. On the other hand, the compactness of the operator $\mathcal T$ can be proved by suitable variants of the estimates. Indeed, if we vary the parameters $\gamma_*$, $\Theta$, $\nu$, $a$, $\nu_1$, $a_1$, $\nu_2$, $a_2$, $\nu_3$, $\nu_4$, $\delta$ slightly such that all the restrictions in \eqref{choice-outer}, \eqref{choice-outer1}, \eqref{choice-inner} and \eqref{choice-inner1} are satisfied, then one can show that the operator $\mathcal T$ has a compact embedding in the sense that if a sequence is bounded in the new variant norms, then there exists a subsequence which converges in the original norms used in $\mathcal X$. Thus, the compactness follows directly from a standard diagonal argument by Arzel\`a--Ascoli's theorem. Therefore, the existence of the desired solution for the single bubble case $k=1$ follows from the Schauder fixed point theorem.

The general case of multiple-bubble blow-up is essentially identical. The ansatz is modified as follows: we look for solution $u$ of the form
\begin{equation*}
u(x,t)=\sum_{j=1}^k U_j+ \Pi_{U_j^{\perp}}\varphi_j+a(\Pi_{U_j^{\perp}} \varphi_j) U_j,
\end{equation*}
where
\begin{align*}
U_j=&~U_{\la_j(t),\xi_j(t),\omega_j(t),\alpha_j(t),\beta_j(t)},\quad  \varphi_j=\varphi_{in}^j+\varphi_{out}^j,
\\
\varphi_{in}^j=&~\eta_{R(t)}(y_j)Q_{\omega_j(t),\alpha_j(t),\beta_j(t)}\phi(y_j,t),\quad y_j=\frac{x-\xi_j(t)}{\la_j(t)},
\\
\varphi_{out}^j=&~\psi(x,t)+Z^*(x,t)+\Phi^0_j+\Phi^{\alpha}_j+\Phi^{\beta}_j.
\end{align*}
Here $\Phi^{0}_j$, $\Phi^{\alpha}_j$ and $\Phi^{\beta}_j$ are corrections defined in a similar way as in \eqref{def-corrections} with $\la$, $\xi$, $\omega$, $\alpha$, $\beta$ replaced by $\la_j$, $\xi_j$, $\omega_j$, $\alpha_j$, $\beta_j$.  We are then led to one outer problem and $k$ inner problems for $u$ together with one Navier--Stokes equation for $v$ with exactly analogous estimates. A string of fixed point problems can be solved in the same manner. We omit the details.\qed

\medskip


\section*{Acknowledgements}

F. Lin is partially supported by NSF grant 1501000.
J. Wei is partially supported by NSERC of Canada.
C. Wang is partially supported by NSF grant 1764417.
We are grateful to Professor Tai-Peng Tsai for useful discussions. Y. Zhou would like to thank Dr. Mingfeng Qiu for useful discussions.

\medskip






\end{document}